\numberwithin{equation}{section}
\newcommand{\pv}{{\rm P.V.}}
\newcommand{\DDD}{D3'}
\newcommand{\calE}{\mathcal{E}}
\newcommand{\bu}{\bar{U}}
\newcommand{\bU}{\bar{U}}
\def\eps{\varepsilon }
\newcommand\R{\mathbb R}
\def\eps{\varepsilon}
\newcommand\errfn{\textrm{errfn}}
\newcommand\br{\begin{remark}}
\newcommand\er{\end{remark}}
\newcommand\bp{\begin{pmatrix}}
\newcommand\ep{\end{pmatrix}}
\newcommand\be{\begin{equation}}
\newcommand\ee{\end{equation}}
\newcommand\ba{\begin{equation}\begin{aligned}}
\newcommand\ea{\end{aligned}\end{equation}}
\newcommand{\bap}{\begin{app}}
\newcommand{\eap}{\end{app}}
\newcommand{\begs}{\begin{exams}}
\newcommand{\eegs}{\end{exams}}
\newcommand{\beg}{\begin{example}}
\newcommand{\eeg}{\end{exaplem}}
\newcommand{\bpr}{\begin{proposition}}
\newcommand{\epr}{\end{proposition}}
\newcommand{\bt}{\begin{theorem}}
\newcommand{\et}{\end{theorem}}
\newcommand{\bc}{\begin{corollary}}
\newcommand{\ec}{\end{corollary}}
\newcommand{\bl}{\begin{lemma}}
\newcommand{\el}{\end{lemma}}
\newcommand{\bd}{\begin{definition}}
\newcommand{\ed}{\end{definition}}
\newcommand{\brs}{\begin{remarks}}
\newcommand{\ers}{\end{remarks}}
\newtheorem{theo}{Theorem}[section]
\newtheorem{prop}[theo]{Proposition}
\newtheorem{cor}[theo]{Corollary}
\newtheorem{lem}[theo]{Lemma}
\newtheorem{exams}[theo]{Examples}
\numberwithin{equation}{section}
\newcommand{\CalE}{\mathcal{E}}
\newcommand{\RR}{{\mathbb R}}
\newcommand{\const}{\text{\rm constant}}
\newtheorem{theorem}{Theorem}[section]
\newtheorem{proposition}[theorem]{Proposition}
\newtheorem{corollary}[theorem]{Corollary}
\newtheorem{lemma}[theorem]{Lemma}
\newtheorem{definition}[theorem]{Definition}
\newtheorem{example}[theorem]{Example}
\newtheorem{remark}[theorem]{Remark}
\newcommand{\RM}{\mathbb{R}}
\newcommand{\CM}{\mathbb{C}}
\title{
Nonlinear stability of viscous roll waves}
\author{\sc \small
Mathew A. Johnson\thanks{Indiana University, Bloomington, IN 47405;
matjohn@indiana.edu: Research of M.J. was partially supported by an NSF Postdoctoral Fellowship under NSF grant DMS-0902192.}
~~~
Kevin Zumbrun\thanks{Indiana University, Bloomington, IN 47405;
kzumbrun@indiana.edu:
Research of K.Z. was partially supported
under NSF grants no. DMS-0300487 and DMS-0801745.}
~~~
Pascal Noble\thanks{Universit\'e Lyon I, Villeurbanne, France;
noble@math.univ-lyon1.fr:
Research of P.N. was partially supported by the French ANR Project no.
ANR-09-JCJC-0103-01.}
}
\begin{document}

\maketitle


\begin{center}
{\bf Keywords}: Roll waves; St. Venant equations; modulational stability.
\end{center}

\begin{center}
{\bf 2000 MR Subject Classification}: 35B35.
\end{center}


\begin{abstract}
Extending results of Oh--Zumbrun and Johnson--Zumbrun for
parabolic conservation laws,
we show that spectral stability implies nonlinear stability
for spatially periodic viscous roll wave solutions of the one-dimensional
St. Venant equations for shallow water flow down an inclined ramp.
The main new issues to be overcome are incomplete parabolicity
and the nonconservative form of the equations, which leads to
undifferentiated quadratic source terms that cannot be handled using
the estimates of the conservative case.
The first is
resolved
by treating the equations in the more favorable
Lagrangian coordinates, for which one can obtain large-amplitude
nonlinear damping estimates similar to those carried out
by Mascia--Zumbrun in the related shock wave case, assuming only
symmetrizability of the hyperbolic part.
The second is
resolved
by the observation that, similarly as
in the relaxation and detonation cases,
sources occurring in nonconservative components experience
greater than expected decay,
comparable to that experienced by a differentiated source.
\end{abstract}


\bigbreak

\section{Introduction }\label{intro}

Roll waves are a well-known hydrodynamic instability
occurring in shallow water flow down an inclined ramp,
generated by competition between gravitational force and
friction along the bottom.
These can be modeled as periodic traveling-wave solutions
of the St. Venant equations for shallow water flow,
which take the form of hyperbolic or parabolic balance laws;
see \cite{D,N1,N2} for detailed discussions of existence in
the inviscid and viscous case.

The spectral and linear stability of roll waves has been studied
for the  inviscid St. Venant equations in \cite{N1} and
the viscous St. Venant equations in \cite{N2}.
However, up to now, the relation between
spectral, linearized, and nonlinear stability has remained an outstanding open
question.
%
In this paper, extending recent results of \cite{OZ4,JZ3,JZ4}
in the related conservation law case, we settle
this question by showing that
{\it spectral implies linearized and nonlinear stability.}

This opens the way to rigorous numerical and analytical
exploration of stability of roll waves and related
phenomena via the associated eigenvalue ODE,
a standard and numerically and analytically well-conditioned problem.
At the same time, it gives a particularly interesting application
of the techniques of \cite{OZ4,JZ3,JZ4}.
For, roll waves, by numerical and experimental observation, appear
likely to be stable, at least in some regimes.
In the conservation law case, by contrast,
periodic waves so far appear typically to be unstable \cite{OZ1}.

\subsection{Equations and assumptions}\label{s:equations}
Consider the one-dimensional
St. Venant equations approximating shallow water flow on an inclined ramp:
\ba \label{eqn:1Econslaw}
h_t + (hu)_x&= 0,\\
(hu)_t+ (h^2/2F+ hu^2)_x&= h- u^2 +\nu (hu_x)_x ,
\ea
where $h$ represents height of the fluid, $u$ the
velocity average with respect to height,
$F$ is the Froude number, which here is the
square of the ratio between speed of the fluid and speed of gravity waves,
$\nu={\rm Re}^{-1}$ is a nondimensional viscosity
equal to the inverse of the Reynolds number,
the term $u^2$ models turbulent friction along the bottom,
and the coordinate $x$ measures longitudinal distance along the ramp.

In Lagrangian coordinates, these appear as
\ba \label{eqn:1conslaw}
\tau_t - u_x&= 0,\\
u_t+ ((2F)^{-1}\tau^{-2})_x&=
1- \tau u^2 +\nu (\tau^{-2}u_x)_x ,
\ea
where $\tau:=h^{-1}$ and $x$ now denotes a Lagrangian marker
rather than physical location.
We will work with this form of the equations, as it is more convenient
for our analysis in several ways.
(Indeed, for the large-amplitude damping estimates
of Section \ref{damping}, it appears to be essential in order
to obtain quantitative bounds on amplitude; see Remark \ref{damprmk}.)

Denoting $U:=(\tau, u)$, consider a spatially periodic
traveling-wave solution
\be\label{eqn:tw}
U=\bar{U}(x -ct),
\ee
of \eqref{eqn:1conslaw}
of period $X$ 
and wavespeed $c$
satisfying the traveling-wave ODE
\be \label{e:second_order}
\begin{aligned}
-c\tau' - u'&= 0,\\
-c u'+ ((2F)^{-1}\tau^{-2})'&=
1- \tau u^2 +\nu (\tau^{-2}u')' ,
\end{aligned}
\ee

Integrating the first equation of \eqref{e:second_order}
and solving for $u= u(\tau):= q-c\tau$,
where $q$ is the resulting
constant of integration, we obtain a second-order scalar profile equation
in $\tau$ alone:
\be \label{e:profile}
c^2 \tau'+ ((2F)^{-1}\tau^{-2})'=
1- \tau (q-c\tau)^2 -c\nu (\tau^{-2}\tau ')' .
\ee
Note that nontrivial periodic solutions of speed $c=0$
do not exist in Lagrangian coordinates,
as this would imply $u\equiv q$, and
\eqref{e:profile} would reduce to a scalar first-order equation
\be \label{e:zerocprofile}
  \tau'= F\tau^3(\tau q^2-1) ,
\ee
which since it is scalar first-order has no nontrivial periodic solutions,
even degenerate ones (e.g., homoclinic or heteroclinic cycles)
that might arise in the singular $c\to 0$ limit.
Rather, there appears to be a Hopf bifurcation as $c$ approaches some
minimum speed for which periodics exist;  see \cite{N2}, Section 4.1
and Fig. 1, Section 4.2.3.

It follows then that periodic solutions of \eqref{e:profile} correspond to values
$(X,c,q,b)\in \RR^5$, where $X$, $c$, and $q$ denote period,
speed, and constant of integration, and $b=(b_1,b_2)$ denotes
the values of $(\tau,\tau')$ at $x=0$, such that
the values of $(\tau,\tau')$ at $x=X$ of the solution of
\eqref{e:profile} are equal to the initial values $(b_1,b_2)$.

Following \cite{Se1,OZ3,OZ4,JZ3,JZ4}, we assume:

(H1) $\bar \tau>0$, so that all terms in \eqref{eqn:1conslaw}
are $C^{K+1}$, $K\ge 3$.

(H2) The map $H: \,
\R^5  \rightarrow \R^2$	
taking $(X,c,q,b) \mapsto (\tau,\tau')(X,c,b; X)-b$
is full rank at $(\bar{X},\bar c, \bar b)$,
where $(\tau,\tau')(\cdot;\cdot)$ is the solution operator of \eqref{e:profile}.

By the Implicit Function Theorem,
conditions (H1)--(H2) imply that the set of periodic solutions
in the vicinity of $\bar U$ form a
smooth $3$-dimensional manifold
\be\label{manifold}
\{\bar U^\beta(x-\alpha-c(\beta)t)\},
\;
\hbox{\rm with $\alpha\in \RR$, $\beta\in \RR^{2}$}.
\ee

\br\label{H2rmk}
\textup{
The transversality condition (H2) could be replaced by
the more general assumption that
the set of periodic solutions
in the vicinity of $\bar U$ form a
smooth $3$-dimensional manifold \eqref{manifold}.
However, it is readily seen in this context that
(H2) is then implied by the spectral stability condition (D3)
of Section \ref{bloch}; that is, transversality is
necessary for our notion of spectral, or Evans, stability.
This situation is reminsiscent of that of the viscous shock
case; see, for example, \cite[S\ 1.2.3]{ZH}, or \cite{MaZ3,Z1}.
}
\er

\br\label{relaxrmk}
\textup{
Note that \eqref{eqn:1conslaw} is of $2\times 2$ viscous relaxation type
\be\label{relax}
U_t+f(U)_x-\nu (B(U)U_x)_x=\bp0\\q(U)\ep,
\quad q(U)=1-\tau u^2,
\ee
where $q_u=-2u \tau<0$ for solutions $u>0$ progressing down the ramp.
Thus, constant solutions are stable so long as the subcharacteristic
condition $\Big|\frac{u^3}{2}\Big|< \Big|\frac{u^3}{\sqrt{F}}\Big|$,
is satisfied, or $F<4$.
When the subcharacteristic condition is violated,
roll waves appear through Hopf bifurcation as parameters are varied
through the minimum speed $c_{\rm min}= \frac{1}{\sqrt{F \tau_0^3}}$;
see Appendix \ref{s:hopf}.
For $\nu=0$, violation of the subcharacteristic condition is
associated with subshocks and the appearance of discontinuous
roll waves observed by Dressler \cite{D};
see \cite{JK} for related, more general, discussion.
}
\er

\br\label{singrmk}
\textup{
The limit $\nu\to 0$ represents an interesting singular
perturbation problem in which the structure of the profile equations
simplifies, decoupling into fast and slow scalar components,
and converging to inviscid Dressler waves \cite{D,N1} in an
appropriate regime \cite{N2}.
%
This would be an interesting setting in which to investigate the
associated spectral stability problem.
Another interesting limit is Hopf bifurcation from the constant solution occurring at minimum speed of existence \cite{N2},
treated here in Section \ref{s:hopf}; see Remark \ref{instabrmk}.
}
\er

\subsubsection{Linearized equations}\label{evans}
Making the change of variables $x\to x-ct$ to
co-moving coordinates, we convert \eqref{eqn:1conslaw} to
\ba \label{eqn:co1conslaw}
\tau_t-c\tau_x - u_x&= 0,\\
u_t-cu_x + ((2F)^{-1}\tau^{-2})_x&=
1- \tau u^2 +\nu (\tau^{-2}u_x)_x ,
\ea
and the traveling-wave solution
to a stationary solution $U=\bar U(x)$
convenient for stability analyis.

Writing \eqref{eqn:co1conslaw} in abstract form
\be\label{ab}
U_t +f(U)_x=(B(U)U_x)_x +g(U)
\ee
and linearizing (\ref{eqn:co1conslaw}) about $\bar{U}(\cdot)$, we obtain
\be \label{e:lin}
v_t = Lv := (\partial_x B\partial_x   -\partial_x A +C) v,
\ee
where the coefficients
\ba\label{coeffs}
A&:= df(\bu) - (dB(\bu) (\cdot) )\bu_x
=\bp -c & -1\\ -\bar \tau^{_-3}(F^{-1}- 2\nu \bar u_x)&-c\ep ,\\
B&:=B(\bu)= \bp 0&0\\0 & \nu \bar \tau^{-2}\ep,
\quad C:=dg(\bar U)=\bp 0 & 0\\-\bar u^2& -2\bar u\bar \tau\ep
\ea
are periodic functions of $x$.  As the underlying solution $\bar{U}$ depends on $x$ only,
equation \eqref{e:lin} is clearly autonomous in time.
By separation of variables, therefore,
decomposing solutions into the sum
of solutions of form $v(x,t)=e^{\lambda t}v(x)$,
where $v$ satisfies the eigenvalue equation $(L-\lambda)v=0$,
or, equivalently, by taking the Laplace transform,
we may reduce the study of stability of $\bar U$ to
the study of the spectral properties of the linearized operator $L$.

As the coefficients of $L$ are $X$-periodic,
Floquet theory
implies that
%
its spectrum is purely continuous.
Moreover, its spectral properties may be conveniently analyzed by
Bloch decomposition, an analog for periodic-coefficient operators
of the Fourier decomposition of a constant-coefficient operator,
as we now describe.


\subsubsection{Bloch decomposition
and stability conditions}\label{bloch}

Following \cite{G,S1,S2,S3}, we define the family of operators
\be \label{e:Lxi}
L_{\xi} = e^{-i \xi x} L e^{i \xi x}
= (\partial_x+i\xi) B(\partial_x+i\xi)
-(\partial_x+i\xi) A +C
\ee
operating on the class of $L^2$ periodic functions on $[0,X]$;
the $(L^2)$ spectrum
of $L$ is equal to the union of the
spectra of all $L_{\xi}$ with $\xi$ real with associated
eigenfunctions
\be
w(x, \xi,\lambda) := e^{i \xi x} q(x, \xi, \lambda),
\label{e:efunction}
\ee
where $q$, periodic, is an eigenfunction of $L_{\xi}$.
By standard considerations \cite{N2},\footnote{
For example, the characterization \cite{G} of spectra
as the zero set of an associated Evans function.}
the spectra of $L_{\xi}$
consist of the union of countably many continuous
surfaces $\lambda_j(\xi)$.

Without loss of generality taking $X=1$,
recall now the {\it Bloch representation}
\be\label{Bloch}
u(x)=
\Big(\frac{1}{2\pi }\Big) \int_{-\pi}^{\pi}
e^{i\xi\cdot x}\hat u(\xi, x) d\xi
\ee
of an $L^2$ function $u$, where
$\hat u(\xi, x):=\sum_k e^{2\pi ikx}\hat u(\xi+ 2\pi k)$
are periodic functions of period $X=1$, $\hat u(\cdot)$
denoting with slight abuse of notation the Fourier transform of $u$
in $x$.
By Parseval's identity, the Bloch transform
$u(x)\to \hat u(\xi, x)$ is an isometry in $L^2$:
\be\label{iso}
\|u\|_{L^2(x)}=
\|\hat u\|_{L^2(\xi; L^2(x))},
\ee
where $L^2(x)$ is taken on $[0,1]$ and $L^2(\xi)$ on $[-\pi,\pi]$.
Moreover, it diagonalizes the periodic-coefficient operator $L$,
yielding the {\it inverse Bloch transform representation}
\be\label{IBFT}
e^{Lt}u_0=
\Big(\frac{1}{2\pi }\Big) \int_{-\pi}^{\pi}
e^{i\xi \cdot x}e^{L_\xi t}\hat u_0(\xi, x)
d\xi\
\ee
relating behavior of the linearized system to
that of the diagonal operators $L_\xi$.

Following \cite{JZ4}, we assume along with (H1)--(H2) the
{\it strong spectral stability} conditions:

(D1) $\sigma(L_\xi) \subset \{ \hbox{\rm Re} \lambda <0 \} $ for $\xi\ne 0$.

(D2) $\hbox{\rm Re} \sigma(L_{\xi}) \le -\theta |\xi|^2$, $\theta>0$,
for $\xi\in \R$ and $|\xi|$ sufficiently small.

(\DDD) $\lambda=0$ is an eigenvalue
of $L_{0}$ of multiplicity $2$.\footnote{
The zero eigenspace of $L_0$,
corresponding to variations along the $3$-dimensional manifold
of periodic solutions in directions for which period does
not change \cite{Se1,JZ4}, is at least $2$-dimensional
by linearized existence theory and (H2).
}

As shown in \cite{N2}, (H1)-(H2) and (D1)--(\DDD)
imply that there exist $2$ smooth eigenvalues
\be\label{e:surfaces}
\lambda_j(\xi)= -i a_j \xi +o(|\xi|)
\ee
of $L_\xi$ bifurcating from $\lambda=0$ at $\xi=0$;
see Lemma \ref{blochfacts} below.

Loosely following \cite{JZ4}, we make the further nondegeneracy hypotheses:

(H3) The coefficients $ a_j$ in \eqref{e:surfaces} are distinct.

(H4) The eigenvalue $0$ of $L_0$ is nonsemisimple, i.e., $\dim
\ker L_0=1$.

\noindent
The coefficients $a_j$ may be seen to be the characteristics
of an associated Whitham averaged system
\ba\label{whit}
M(\beta)_t + G(\beta)_x&=0,\\
\Omega(\beta)_t + (c(\beta) \Omega(\beta))_x&=0
\ea
linearized about the values of $M$, $G$, $c$, $\Omega$ associated
with the background wave $\bar u$,
where $M$ is the mean of $\tau$ over one period and
$F$ the mean in the $\tau$-coordinate
of a certain associated flux, $c$ is wave-speed,
and $\Omega$ frequency of nearby periodic solutions,
indexed as in \eqref{manifold} by $\beta\in \R^2$;
see \cite{N2,OZ3,OZ4}.\footnote{
Here, we follow the formalism and notation of \cite{OZ3,OZ4}.
}
System \eqref{whit} formally governs slowly modulated solutions
\be\label{formod}
\tilde u(x,t)=\bar u^{\beta(\eps x,\eps t)}(\Psi(x,t))+O(\eps),\qquad \eps\to 0
\ee
presumed to describe large spatio-temporal behavior $x$, $t\gg 1$,
where $\bar u^\beta(\cdot)$
as in \eqref{manifold} parametrizes the set of nearby periodic solutions,
$\Omega=\Psi_x$,  and $c=-\Psi_t/ \Psi_x$.

Thus, (D1) implies weak hyperbolicity
of the Whitham averaged system \eqref{whit} (reality of $a_j$),
while (H3) corresponds to strict hyperbolicity.
Condition (H4) holds generically, and corresponds to the assumption
that speed $c$ is nonstationary along the manifold of nearby
stationary solutions; see Lemma \ref{blochfacts}.\footnote{
The case that (H4) is violated may be treated as in \cite{JZ3}.
}
Condition (D2) corresponds to ``diffusivity'' of
the large-time ($\sim$ small frequency) behavior of the linearized system,
and holds generically given (H1)--(H4), (D1), and (D3').\footnote{
This amounts to nonvanishing of $b_j$ in the Taylor series expansion
$\lambda_j(\xi)=-ia_j\xi- b_j\xi^2$
guaranteed by Lemma \ref{blochfacts} given (H1)--(H4), (D1), and (D3').}
Condition (\DDD) also holds generically, and can be verified by
an Evans function computation as described in \cite{N1}.
As discussed in \cite{OZ1,Se1,JZ3,JZ4},
conditions (D1)--(D3') are conservation law analogs of
the spectral assumptions
introduced by Schneider in the reaction-diffusion case \cite{S1,S2,S3}.

\subsection{Main result}

\begin{theo}\label{main}
Assuming (H1)--(H4) and (D1)--(\DDD),
let $\bar U=(\bar \tau, \bar u)$ be a traveling-wave solution
\eqref{eqn:tw} of \eqref{eqn:1conslaw} satisfying
the derivative condition
\be\label{froudebd}
\nu \bar u_x < F^{-1}.
\ee
Then, for some $C>0$ and $\psi \in W^{K,\infty}(x,t)$, where $K\geq 3$ is as in (H1)
\ba\label{eq:smallsest}
\|\tilde U-\bar U(\cdot -\psi-ct)\|_{L^p}(t)&\le
C(1+t)^{-\frac{1}{2}(1-1/p)}
\|\tilde U-\bar U\|_{L^1\cap H^K}|_{t=0},\\
\|\tilde U-\bar U(\cdot -\psi-ct)\|_{H^K}(t)&\le
C(1+t)^{-\frac{1}{4}}
\|\tilde U-\bar U\|_{L^1\cap H^K}|_{t=0},\\
\|(\psi_t,\psi_x)\|_{W^{K+1,p}}&\le
C(1+t)^{-\frac{1}{2}(1-1/p)}
\|\tilde U-\bar U\|_{L^1\cap H^K}|_{t=0},\\
\ea
and
\ba\label{eq:stab}
\|\tilde U-\bar U(\cdot-ct)\|_{ L^\infty}(t), \; \|\psi(t)\|_{L^\infty}&\le
C
\|\tilde U-\bar U\|_{L^1\cap H^K}|_{t=0}
\ea
for all $t\ge 0$, $p\ge 2$,
for solutions $\tilde U$ of \eqref{eqn:1conslaw} with
$\|\tilde U-\bar U\|_{L^1\cap H^K}|_{t=0}$ sufficiently small.
In particular, $\bar U$ is nonlinearly bounded
$L^1\cap H^K\to L^\infty$ stable.
\end{theo}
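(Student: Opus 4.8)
The plan is to follow the scheme of \cite{OZ4,JZ4}: combine refined linearized estimates obtained from the Bloch decomposition with a nonlinear iteration in which a spatio-temporal phase $\psi(x,t)$ absorbs the non-decaying translational part of the linear solution operator, and then close the high-frequency ($H^K$) norm by a large-amplitude Kawashima-type damping estimate carried out in the Lagrangian coordinates \eqref{eqn:1conslaw}. First I would establish the linearized bounds. Using Lemma \ref{blochfacts} together with (D1)--(\DDD) and (H3)--(H4), split the inverse Bloch representation \eqref{IBFT} as $e^{Lt}=S_{\mathrm{low}}(t)+S_{\mathrm{high}}(t)$, where $S_{\mathrm{high}}$ carries a spectral gap and decays exponentially on $H^K$, while $S_{\mathrm{low}}$ is supported near $\xi=0$ and is governed by the two critical surfaces \eqref{e:surfaces}, refined to $\lambda_j(\xi)=-ia_j\xi-b_j\xi^2+o(|\xi|^2)$ with $\mathrm{Re}\,b_j>0$ (the $a_j$ distinct by (H3), the relevant Jordan block coming from (H4)). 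Extracting the excited translational mode, write $S_{\mathrm{low}}(t)h=\bU'(x)\,e(x,t;h)+\widetilde S(t)h$, where the scalar kernel $e$ satisfies heat-type bounds $\|\partial_x^{j}\partial_t^{\ell}e(t)h\|_{L^p}\lesssim(1+t)^{-\frac12(1-1/p)-(j+\ell)/2}\|h\|_{L^1\cap L^2}$, and $\widetilde S$ obeys the same bounds with one extra factor $(1+t)^{-1/2}$, i.e.\ decays like a differentiated heat kernel. The decisive additional observation --- as in the relaxation and detonation settings --- is that when any of these operators acts on a source of the form $(0,r)^{T}$ supported in the nonconservative ($u$-) component, one gains yet another factor $(1+t)^{-1/2}$, so that such sources behave as though differentiated.

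Next I would set up the nonlinear perturbation equations. Passing to co-moving coordinates \eqref{eqn:co1conslaw} and writing $v(x,t):=\tilde U(x,t)-\bU(x-\psi(x,t))$ (or a close variant), one obtains, following \cite{JZ4}, an equation of the schematic form
\[
(\partial_t-L)v=\partial_x\mathcal{Q}(v)+\mathcal{R}(v)+(\partial_t\psi)\,\bU'+\mathcal{E},
\]
where $\mathcal{Q}$ is a conservative quadratic nonlinearity, $\mathcal{E}$ gathers cubic-and-higher terms and terms carrying derivatives of $\psi$ other than the displayed resonant one, and $\mathcal{R}$ --- the genuinely new object --- consists of the undifferentiated self-interaction terms (quadratic and higher) produced by the nonconservative source $1-\tau u^2$ of \eqref{eqn:1conslaw}; the crucial point is that $\mathcal{R}$ lies \emph{entirely in the nonconservative component}, so by the previous paragraph it may be estimated as if differentiated. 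Applying Duhamel to $v$ and then fixing $\psi$ by the implicit prescription $\psi(x,t)=-\int_0^t e\big(x,t-s;\,\partial_x\mathcal{Q}+\mathcal{R}+\mathcal{E}\big)(s)\,ds$ (with an appropriate initialization at $t=0$) cancels exactly the resonant $\bU'\,e$ contribution in the representation of $v$, producing both a closed integral equation for $v$ and explicit integral formulas for $\psi$, $\psi_t$, $\psi_x$.

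I would then run the iteration. With $E_0:=\|\tilde U-\bU\|_{L^1\cap H^K}|_{t=0}$ and
\[
\zeta(t):=\sup_{0\le s\le t}\Big((1+s)^{\frac14}\|v\|_{H^K}+\sup_{p\ge 2}(1+s)^{\frac12(1-1/p)}\big(\|v\|_{L^p}+\|(\psi_t,\psi_x)\|_{W^{K+1,p}}\big)+\|\psi\|_{L^\infty}\Big)(s),
\]
inserting the bilinear/trilinear structure of $\mathcal{Q},\mathcal{R},\mathcal{E}$ into the linearized bounds --- the gain of a derivative on $\mathcal{R}$ being precisely what keeps the convolutions $\int_0^t(1+t-s)^{-\alpha}(1+s)^{-1}\,ds$ convergent at the stated rates --- yields $\zeta(t)\le CE_0+C\zeta(t)^2$ for the $L^p$- and $\psi$-parts. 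To close the $H^K$ bound on $v$, where incomplete parabolicity precludes parabolic smoothing, I would prove a nonlinear damping inequality
\[
\|v(t)\|_{H^K}^2\le Ce^{-\theta t}\|v(0)\|_{H^K}^2+C\int_0^t e^{-\theta(t-s)}\big(\|v(s)\|_{L^2}^2+\|(\psi_t,\psi_x)(s)\|_{H^{K+1}}^2\big)\,ds
\]
via a Kawashima-type energy functional built from symmetrizability of the hyperbolic part $A$ and the $(2,2)$-entry of $B$ in \eqref{coeffs}; as in \cite{MaZ3} this is exactly where the Lagrangian form is essential (it alone yields quantitative, large-amplitude bounds; cf.\ Remark \ref{damprmk}) and where the derivative hypothesis \eqref{froudebd} enters, controlling the sign of the quadratic forms produced by the first-order part. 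Combining this with the $\zeta$-bound gives $\zeta(t)\le CE_0+C\zeta(t)^2$ in full, whence $\zeta(t)\le 2CE_0$ for $E_0$ small by continuous induction --- this is \eqref{eq:smallsest}. Estimate \eqref{eq:stab} then follows from $\|\psi\|_{L^\infty}\le\zeta(t)\le 2CE_0$ together with $\|\tilde U-\bU(\cdot-ct)\|_{L^\infty}\le\|\tilde U-\bU(\cdot-\psi-ct)\|_{L^\infty}+\|\bU'\|_{L^\infty}\|\psi\|_{L^\infty}\le CE_0$, using Sobolev embedding for the first term.

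The hard part will be the closing of the $H^K$ bound: handling simultaneously the degenerate viscosity --- which forces the high-order damping estimate and is the very reason for passing to Lagrangian coordinates --- and the undifferentiated nonconservative self-interaction $\mathcal{R}$. A naive treatment would estimate $\mathcal{R}\sim|v|^2\sim(1+s)^{-1}$ against a Green's function decaying only like $(1+t-s)^{-1/4}$ in $L^2$, giving $\int_0^t(1+t-s)^{-1/4}(1+s)^{-1}\,ds\sim(1+t)^{-1/4}\log t$ --- a lost logarithm, with outright failure at the lower rates and for the $\psi$-bound. The resolution, and the technical heart of the argument, is the refined linearized bound established in the first step: the component of the solution operator seen by sources in the nonconservative component behaves like a once-differentiated operator, restoring integrability and the sharp decay rates.
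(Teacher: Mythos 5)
Your proposal is correct and follows essentially the same route as the paper: Bloch-decomposition linearized bounds with extraction of the translational mode $\bar U'(x)e(x,t;y)$ and the crucial observation that sources of form $(0,r)^T$ in the nonconservative component gain an extra factor $(1+t)^{-1/2}$ (the paper's Remark \ref{newrmk} and bound \eqref{GIest}(iii)), a modulated-phase perturbation with $\psi$ defined implicitly through Duhamel and the cancellation identity of Proposition \ref{p:cancellation}, a large-amplitude Kawashima-type damping estimate in Lagrangian coordinates relying on symmetrizability under \eqref{froudebd}, and closure by the weighted norm $\zeta$ and continuous induction. The only cosmetic differences are the sign convention in defining $v$ (the paper uses $\tilde U(x+\psi,t)-\bar U(x)$) and that you fold $\|\psi\|_{L^\infty}$ into $\zeta$ whereas the paper estimates it afterward; neither changes the argument.
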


Theorem \ref{main} asserts  not only
bounded $L^1\cap H^K \to L^\infty$ stability, a very weak notion of stability,
but also asymptotic convergence of $\tilde U$ to the modulated wave
$\bar U(x-\psi(x,t))$.

\br\label{nonlinp}
\textup{
With further effort, it may be shown that the results of
Theorem \ref{main} extend to all $1\le p\le \infty$ using the pointwise
techniques of \cite{OZ2}; see discussion, \cite{JZ3,JZ4}.
}
\er

\br\label{froudermk1}
\textup{
The derivative condition \eqref{froudebd} is effectively an upper bound on
the amplitude of the periodic wave; see Remark \ref{froudermk}.
As discussed in Remark \ref{damprmk}, this is precisely the condition
that the first-order part of the linearized equations \eqref{e:lin}
be symmetric hyperbolic
(i.e., that $A$ in \eqref{coeffs} be symmetrizable),
and reflects a subtle competition between
hyperbolic and parabolic effects.
(The first-order part of the inviscid equations
is always symmetric--hyperbolic, corresponding to the equations
of isentropic gas dynamics with $\gamma$-law gas.)
It is satisfied when either wave amplitude or viscosity coefficient $\nu$
is sufficiently small.
It is not clear whether this condition may be relaxed.
}

\textup{
We note that condition \eqref{froudebd} is satisfied
for {\it all} roll-waves computed numerically in \cite{N2}.
For, in Eulerian coordinates, this condition translates to
$ h_x/h<(c\nu F)^{-1}$.
Examining Fig. 1 of \cite{N2},
a phase portrait in $(h,h')$ for $F=6$, $\nu=0.1$, and
$1.89<c<1.91$, we see that all periodic orbits appear to lie
beneath the line $h'/h=.68 $, whereas $(c\nu F)^{-1} \approx .88$.
}
\er

It is straightforward using the bounds of Corollary
\ref{greenbds} to show for ``zero-mass'', or derivative,
initial perturbations, that nonlinear decay rates
\eqref{eq:smallsest}--\eqref{eq:stab} improve by factor $(1+t)^{-1/2}$,
to the rates seen in the reaction-diffusion case \cite{S1,JZ5} for
general (undifferentiated) localized perturbations.
In particular, the perturbed wave $\tilde U$ then decays
asymptotically in $L^\infty$ to the background wave $\bar U$
with Gaussian rate $(1+t)^{-1/2}$ as in the reaction-diffusion case.
Likewise, under an unlocalized initial perturbation, or,
equivalently, the integral of a localized perturbation
the difference between $\tilde U$ and $\bar U$ may be expected
to blow up at rate $(1+t)^{1/2}$- this is indeed the linearized
behavior- and, barring special nonlinear structure, there
seems no reason why the difference between $\tilde U$ and the
modulation $\bar U(\cdot-\Psi)$ should not blow up as well:
at best it remains bounded.
In the reaction-diffusion case, for comparison,
results announced in \cite{SSSU} assert
that $\tilde U$ remains close to $\bar U$
even under unlocalized perturbations, and approaches
the modulated wave at rate $(1+t)^{-1/2}$ in $L^\infty$.
That is, the behavior in the conservation (balance) law case compared
to that in the reaction-diffusion case is, roughly speaking,
shifted by one derivative.\footnote{
At a purely technical level, this can be seen by the appearance of
a Jordan block in the zero eigenspace of $L_0$, introducing factor $\xi^{-1}$ in the
description of low-frequency behavior (Lemma \ref{blochfacts}).
Recall that a factor $i\xi$ corresponds roughly to differentiation in the
Bloch representation, through its relation to the Fourier transform.
In the reaction--diffusion case, the zero eigenspace of $L_0$ is simple,
and no such factor appears.
}

This reflects a fundamental difference between modulational behavior
in the present, conservation (or balance) law setting from that
of the reaction--diffusion case.
Namely, in the reaction-diffusion case, the Whitham averaged system
reduces to a single equation $\partial_t (\Omega)+ \partial_x (\Omega c)=0$,
or, equivalently,
\be\label{eq:rdmodel}
\Psi_t + c(\Psi_x)\Psi_x=0,
\ee
where $\Omega:=\Psi_x$ denotes frequency and $c:=-\frac{\Psi_t}
{\Psi_x}$ wave speed, and $c$ and $\Omega$ are related by the linearized
dispersion relation along the family of periodic orbits
(in the case considered by Schneider \cite{S1}, $c\equiv 0$).
On the other hand, the Whitham averaged equations \eqref{whit} in the present
case are a genuine $2\times 2$ first-order hyperbolic system\footnote{
In general, the dimension of the Whitham averaged system is
equal to the dimension of the manifold of nearby periodic solutions,
modulo translations \cite{JZ4}.
}
 in $\Psi_x$ and wave-speed $c$,
$c$ now considered as an independent parameter; that is,
they describe modulation of the perturbed wave in frequency
$\Psi_x$ and speed $c$, with phase shift $\Psi$ determined
indirectly by integration of $\Psi_x$.

Assuming heuristically (as justified
at the linearized, spectral, level
by the Bloch analysis of Section \ref{prep}),
that modulational behavior is governed by
a second-order regularization of the first-order Whitham averaged system,
we have the standard picture of behavior under localized perturbation
as consisting of modulations in $(\Psi_x, c)$ given by
a pair of approximate Gaussians
propagating outward with Whitham characteristic speeds $a_1$ and $a_2$,
hence an associated, much larger modulation in $\Psi$ determined
by integration in the $\Psi_x$ component, given
by a sum of approximate errorfunctions propagating with the same speeds.

Indeed,
this is exactly the description given in \eqref{eq:errfns}
of the principal part of the kernel $e(x,t;y)$
determining $\Psi$ through \eqref{psi}.
Likewise, the principal part of the Green function of the linearized
equations about $\bar U$ is $\bar U'(x)e(x,t;y)$, showing that linearized
behavior to lowest order indeed consists of a translation, or multiple
of $\bar U'(x)$, with amplitude
$$
\Psi(x,t)=\int e(x,t;y)(\tilde U(y,0)-\bar U(y,0))dy;
$$
see the description of the Green function in Corollary \ref{greenbds}.
The same considerations show that the rate of convergence of $\tilde U$
to the modulation $\bar U(\cdot -\Psi)$ cannot be improved (or,
in the case of a nonlocalized perturbation, recovered) by modulating
in additional parameters such as wave speed $c$ or etc.
For, as indicated by the above discussion, all such modulations
represent smaller contributions by factor $(1+t)^{-1/2}$ than
that of the phase $\Psi$, comparable rather to frequency $\Psi_x$,
and thus may be ignored in consideration of blow-up vs.stability.

This picture of modulational behavior as ``filtering'' by integration
along a certain direction of the hyperbolic--parabolic system derived
by Whitham averaging
seems quite interesting at a phenomenological level, and
a genuinely novel aspect of the conservation (balance) law case.
In particular, the $\Psi_x$ component direction along which the integration
is performed is in general
independent of either characteristic mode, so that the resulting
behavior is essentially different from that exemplified by
\eqref{eq:rdmodel} of a single scalar equation as in
the reaction--diffusion case.

\subsection{Discussion and open problems}\label{s:discussion}
The extension from the parabolic conservation law to the
present case involves a number of new technical issues associated with
lack of parabolicity and nonconservative form.
We overcome these difficulties by combining the
arguments of \cite{JZ3,JZ4}, \cite{N2} with those of
\cite{MaZ4,Z1,TZ1} (real viscosity) and
\cite{MaZ1} and \cite{LRTZ,TZ2} (relaxation and combustion systems
both involving nonconservative terms).

An interesting open problem is the rigorous justification
of spectral stability of
roll waves approaching the inviscid case in the singular zero
viscosity limit, extending results of \cite{N2}.
We hope to carry this out in future work.
For related asymptotic analysis,
see the study in \cite{Z2} of the inviscid limit for detonations.

Another interesting open problem is the
numerical investigation of spectral stability of large-amplitude
roll waves.
In particular, it is an interesting question
whether violation of the apparently technical
``amplitude condition'' \eqref{froudebd}
corresponds to actual physical phenomena/instability.
This is not inconceivable, as \eqref{froudebd}
is needed in our argument
not only for nonlinear iteration, but also for high-frequency
linearized bounds.  As the condition that the first-order
part of the equations be symmetric hyperbolic, it may well have
such significance-- however, this is not yet clear.

It is straightforward to extend our results
to the two-dimensional small-amplitude case, by
working in Eulerian coordinates and substituting for the present
large-amplitude damping estimate the simpler small-amplitude version
of \cite{MaZ2}; see \cite{JZ3,JZ4} for the multi-dimensional analysis
of periodic waves.
%
However, there is some evidence that roll waves develop transverse
instabilities in multi-dimensions \cite{N3}.
If so, this suggests the question whether such instability might
be connected with bifurcation to multiply periodic waves.
The extension of our stability analysis to the multiply periodic
case, as suggested in \cite{JZ3,JZ4},
would be another very interesting open problem.

\section{Spectral preparation}\label{prep}

We begin by a careful study of the Bloch perturbation expansion near $\xi=0$.

\begin{lemma}\label{blochfacts}
Assuming (H1)--(H4), (D1), and (\DDD), the eigenvalues
$\lambda_j(\xi)$ of $L_\xi$
are analytic functions and the Jordan structure of the zero
eigenspace of $L_0$ consists
of a $1$-dimensional kernel and a single Jordan chain of height $2$,
where the left kernel of $L_0$ is spanned by the constant
function $\tilde f\equiv (1,0)^T$, and $\bar u'$ spans the right
eigendirection lying at the base of the Jordan chain.
Moreover, for $|\xi|$ sufficiently small,
 there exist right and left eigenfunctions
$q_j(\xi, \cdot)$ and $\tilde q_j(\xi, \cdot)$
of $L_\xi$ associated with $\lambda_j$ of form
$q_j=\sum_{k=1}^2 \beta_{j,k} v_k$ and $\tilde q_j=\sum_{k=1}^2 \tilde \beta_{j,k} \tilde v_k$
where $\{v_j\}_{j=1}^2$ and $\{\tilde v_j\}_{j=1}^2$ are dual bases of the total
eigenspace of $L_\xi$ associated with sufficiently small eigenvalues,
analytic in $\xi$,
with $\tilde v_2(0)$ constant and $v_1(0) \equiv \bar u'(\cdot)$;
$\xi^{-1}\tilde \beta_{j,1}, \tilde \beta_{j,2}$
and $\xi \beta_{j,1}, \beta_{j,2}$
are analytic in $\xi$; and $\langle \tilde q_j,q_k\rangle= \delta_j^k$.
\end{lemma}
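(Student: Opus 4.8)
The plan is to carry out a standard Lyapunov–Schmidt / spectral perturbation analysis of the family $L_\xi$ near $\xi = 0$, organized around the known structure of $L_0$ coming from the existence theory and (H1)--(H4). First I would establish the Jordan structure of the zero eigenspace of $L_0$. Differentiating the profile equation \eqref{e:second_order} in $x$ shows that $\bar U'$ lies in $\ker L_0$, and (H4) says $\dim\ker L_0 = 1$, so $\bar U'$ spans it. For the left kernel: because the second component of $f$, $B$, and $g$ in \eqref{ab} contributes a total derivative in the $\tau$-equation only through $-u_x$, while the first equation $\tau_t = c\tau_x + u_x$ is in conservation form with the $\tau$-flux trivial, the constant function $\tilde f \equiv (1,0)^T$ satisfies $\tilde f^{\,T} L_0 = 0$; equivalently $L_0^* \tilde f = 0$. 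One checks this directly from \eqref{e:lin}--\eqref{coeffs}: $\partial_x B \partial_x$ and $\partial_x A$ applied on the left to a constant vector kill the $\partial_x$, and the surviving term $C^T \tilde f$ vanishes because the first column of $C$ is zero. Since $\langle \tilde f, \bar U'\rangle = \int_0^X \bar\tau'\,dx = 0$ by periodicity, $\bar U'$ is not at the top of its Jordan chain, so by (D3') (algebraic multiplicity exactly $2$) there is a single Jordan chain of height exactly $2$: pick $\bar U^{(2)}$ with $L_0 \bar U^{(2)} = \bar U'$, solvable precisely because $\langle \tilde f, \bar U'\rangle = 0$. (The generalized eigenfunction $\bar U^{(2)}$ can be identified, as in \cite{Se1,JZ4}, with $\partial_\beta \bar U^\beta$ in the direction changing period, consistent with the footnote to (D3').) Analyticity of the finitely many small eigenvalues $\lambda_j(\xi)$ and of a corresponding spectral projection $P(\xi)$ follows from analytic (Kato) perturbation theory, since $L_\xi$ depends analytically on $\xi$ and $0$ is an isolated eigenvalue of $L_0$ of finite multiplicity with the rest of $\sigma(L_0)$ bounded away by (D1)--(D2).

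Next I would construct the analytic bases $\{v_k\}$, $\{\tilde v_k\}$ of the (two-dimensional) total eigenspace $\mathrm{Range}\,P(\xi)$ and its adjoint. Kato's construction gives an analytic basis of $\mathrm{Range}\,P(\xi)$; I would normalize it so that at $\xi = 0$ it is adapted to the Jordan chain, taking $v_1(0) = \bar U'$ and $v_2(0) = \bar U^{(2)}$, with the dual basis $\tilde v_1, \tilde v_2$ chosen so that $\tilde v_2(0)$ is the constant left eigenfunction $\tilde f$ (possible since $\langle \tilde f, \bar U'\rangle = 0$ and $\langle \tilde f, \bar U^{(2)}\rangle \neq 0$ — this last nonvanishing can be arranged, and is exactly the statement that the chain has height $2$). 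Then the reduced operator is the $2\times 2$ matrix $\mathcal M(\xi) := (\langle \tilde v_j(\xi), L_\xi v_k(\xi)\rangle)_{j,k}$, analytic in $\xi$, whose eigenvalues are the $\lambda_j(\xi)$. At $\xi = 0$ it is the nilpotent Jordan block $\bigl(\begin{smallmatrix} 0 & 1 \\ 0 & 0\end{smallmatrix}\bigr)$ in the chosen basis. The crucial quantitative input is that $\partial_\xi \mathcal M(0)$ has a nonzero $(2,1)$ entry: this is where (D3') (through the height-$2$ chain) and the structure of $L_\xi$ enter, and it forces the splitting of $\lambda_j(\xi)$ to occur at order $|\xi|$ rather than $|\xi|^{1/2}$, giving the square-root-free expansion \eqref{e:surfaces} $\lambda_j(\xi) = -ia_j\xi + o(|\xi|)$ with $a_j$ the eigenvalues of a $2\times 2$ matrix — which (H3) assumes distinct, hence the $\lambda_j$ split to distinct analytic branches. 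Concretely, writing $\mathcal M(\xi) = \bigl(\begin{smallmatrix} *\xi+\cdots & 1 + *\xi+\cdots \\ m\,\xi + \cdots & *\xi + \cdots\end{smallmatrix}\bigr)$ with $m := \partial_\xi \mathcal M(0)_{21} \neq 0$, the characteristic polynomial is $\lambda^2 - (\mathrm{tr})\lambda + \det$ with $\mathrm{tr} = O(\xi)$ and $\det = -m\xi + O(\xi^2)$, so $\lambda_{\pm}(\xi) \sim \pm\sqrt{-m\xi}$ — wait, that would be a square root. The resolution, as in \cite{OZ3,OZ4,JZ4}, is that one must rather diagonalize the leading part: the correct statement is that $a_j$ are the eigenvalues of the matrix obtained after the Jordan block is accounted for, and strict hyperbolicity (H3) plus (D1) (reality) is exactly the hypothesis that makes the two branches analytic and real-characteristic; I would follow the treatment in \cite{OZ3,OZ4} verbatim for this algebraic normal-form step, since it is purely linear-algebraic.

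Finally, the eigenfunction bookkeeping. Given the analytic branches $\lambda_j(\xi)$ with $\lambda_j(0) = 0$ but $\lambda_j'(0) = -ia_j \neq 0$ (distinct), the eigenprojection of $\mathcal M(\xi)$ onto $\lambda_j$ is analytic for $\xi \neq 0$ small but has a pole-type singularity as $\xi \to 0$ governed by the Jordan block: the right eigenvector $(\beta_{j,1}, \beta_{j,2})$ of $\mathcal M(\xi)$, normalized suitably, has $\beta_{j,1} \sim (\text{const})/\xi$ and $\beta_{j,2} = O(1)$, i.e. $\xi\beta_{j,1}$ and $\beta_{j,2}$ are analytic; dually the left eigenvector has $\tilde\beta_{j,1} = O(\xi)$ and $\tilde\beta_{j,2} = O(1)$, i.e. $\xi^{-1}\tilde\beta_{j,1}$ and $\tilde\beta_{j,2}$ are analytic — these asymptotics are forced by the requirement $\langle \tilde q_j, q_k\rangle = \delta_j^k$ against a rank-one-degenerating pairing, and are a routine computation with the $2\times 2$ normal form. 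Setting $q_j = \sum_k \beta_{j,k} v_k$ and $\tilde q_j = \sum_k \tilde\beta_{j,k}\tilde v_k$ then yields genuine right/left eigenfunctions of $L_\xi$ with the asserted analyticity of $\xi\beta_{j,1}, \beta_{j,2}, \xi^{-1}\tilde\beta_{j,1}, \tilde\beta_{j,2}$ and biorthonormality. I expect the main obstacle to be the normal-form step in the second paragraph: showing cleanly that the perturbation splits at order $\xi$ (not $\xi^{1/2}$) and producing the matrix whose eigenvalues are $a_j$ requires carefully exploiting the precise form \eqref{e:Lxi} of the $\xi$-dependence (the $i\xi$ enters linearly in $A$ and quadratically through $B$) together with the Jordan structure, and it is here that one genuinely uses (D3') and (H4) rather than just quoting Kato; the rest is bookkeeping of orders of vanishing in $\xi$ that mirrors \cite{JZ4} exactly.
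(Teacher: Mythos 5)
Your setup—Jordan structure of $L_0$, identification of the left kernel $(1,0)^T$ via the conservative form of the $\tau$-equation, $\bar U'$ at the base of the chain via translation invariance, reduction by Kato's analytic projections to a $2\times 2$ matrix problem $\mathcal M(\xi)$—matches the paper. The asserted orders of vanishing of $\beta_{j,k}$ and $\tilde\beta_{j,k}$ at the end are also correct. But there is a genuine gap at exactly the point you flag as ``the main obstacle,'' and your proposed fix does not resolve it.

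You claim the crucial input is that $\partial_\xi \mathcal M(0)_{21} = m \neq 0$, then correctly observe that this would give $\lambda_\pm \sim \pm\sqrt{-m\xi}$, a Puiseux singularity, and then appeal vaguely to ``diagonalizing the leading part.'' This does not work: if $m\ne 0$, the eigenvalues \emph{are} nonanalytic in $\xi$ (this is precisely the generic picture the paper's remark after the lemma warns about), and no amount of (H3), (D1), or linear-algebraic normal form can repair that. The actual mechanism in the paper is the opposite: $(M_1)_{21} = 0$. This follows from the structural identity \eqref{comp}, $A\bar U_x = g(\bar U) + B(\bar u)\partial_x\bar u_x$, which uses that $f(\bar U)_x - (B\bar U_x)_x = g(\bar U)$ along the profile. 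Pairing $L^1 \bar U' = (\partial_x B + B\partial_x - A)\bar U'$ against the constant left-kernel vector $\tilde v_2 \propto (1,0)^T$ kills the $\partial_x B$ and $B\partial_x$ terms (since the first row of $B$ is zero) and reduces $\langle \tilde v_2, A\bar U'\rangle$ to $\langle \tilde v_2, g(\bar U)\rangle + \langle \tilde v_2, B\partial_x\bar u_x\rangle = 0$, both vanishing because they see only the second component. This is a PDE-structural fact about the balance-law form of \eqref{eqn:1conslaw}, not pure linear algebra, and it is not something you can ``follow verbatim'' from a reference without stating it—your assertion that (D3')/(H4)/(H3) ``force the splitting at order $|\xi|$'' is simply false without it.

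Once $(M_1)_{21} = 0$ is known, the mechanism for analyticity is the rescaling $\check M_\xi := (i\xi)^{-1} S(\xi) M_\xi S(\xi)^{-1}$ with $S = \operatorname{diag}(i\xi,1)$, which is well-defined as $\xi\to 0$ precisely because the $(2,1)$ entry of $M_\xi$ is $O(\xi^2)$. The rescaled matrix $\check M_\xi$ is analytic with $\check M_0$ having the distinct eigenvalues $-ia_j$ (by (H3)), so the branches and eigenvectors perturb analytically; undoing $S$ gives the stated $\xi^{\pm 1}$ factors in $\beta_{j,1}, \tilde\beta_{j,1}$. Without the vanishing of $(M_1)_{21}$, the rescaled $(2,1)$ entry is $(i\xi)^{-2} m_{21} \sim (i\xi)^{-1} m$, which blows up, and the whole construction collapses—consistent with the square root you noticed. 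You should replace the paragraph beginning ``The crucial quantitative input\ldots'' with the computation $\langle (1,0)^T, L^1\bar U'\rangle = 0$ and the rescaling by $S$.
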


\br
\textup{
Notice that the results of Lemma \ref{blochfacts} are somewhat unexpected since, in general, eigenvalues
bifurcating from a non-trivial Jordan block typically do so in a nonanalytic fashion,
rather being expressed in a Puiseux series in fractional
powers of $\xi$.\footnote{
This is, however, consistent with the picture
of behavior as being approximately governed by a
first-order Whitham averaged system with eigenvalue perturbation expansions
agreeing to first-order with the associated linearized homogeneous dispersion relation
\cite{N2,OZ3,OZ4}.
}
The fact that analyticity prevails in our situation is a consequence of the very special structure
of the left and right generalized null-spaces of the unperturbed operator $L_0$, and the special
forms of the equations considered.
}
\er

\begin{proof}
Recall that $L_\xi$ has spectrum consisting of isolated eigenvalues
of finite multiplicity \cite{N2,G}.
Expanding
\be \label{Lpert}
L_\xi=L_0 + i\xi L^1- \xi^2L^2,
\ee
where, by \eqref{e:Lxi},
\ba \label{Ls}
L_0&=\partial_x B\partial_x - \partial_x A + C,
\quad
L^1= (B\partial_x + \partial_x B -A),
\quad
L^2= B,
\ea
consider the spectral
perturbation problem in $\xi$ about the eigenvalue $\lambda=0$ of $L_0$.

Because $0$ is an isolated eigenvalue of $L_0$, the associated total
right and left eigenprojections $P_0$ and $\tilde P_0$ perturb
analytically in $\xi$,
giving projection $P_\xi$ and $\tilde P_\xi$ \cite{K}.
These yield in standard fashion
(for example, by projecting appropriately chosen fixed subspaces)
locally analytic right and left bases $\{v_j\}$ and $\{\tilde v_j\}$
of the associated total eigenspaces given by
the range of $P_\xi$, $\tilde P_\xi$.

Defining $V=(v_1, v_{2})$ and
$\tilde V=(\tilde v_1,  \tilde v_{2})^*$, $*$ denoting
adjoint, we may convert the infinite-dimensional
perturbation problem \eqref{Lpert} into a $2\times 2$
matrix perturbation problem
\be\label{Mpert}
M_\xi=M_0+ i\xi M_1 - \xi^2 M_2+O(|\xi|^3),
\ee
where $M_\xi:= \left<\tilde V_\xi^*, L_\xi V_\xi\right>$ and $\left<\cdot,\cdot\right>$
denotes the standard $L^2(x)$ inner product on the finite interval $[0,X]$.
That is, the eigenvalues $\lambda_j(\xi)$
lying near $0$ of $L_\xi$ are the eigenvalues
of $M_\xi$, and the associated right and left eigenfunctions
of $L_\xi$ are
\be\label{vecrel}
f_j=V w_j  \;\hbox{\rm  and } \;
\tilde f_j=\tilde w_j \tilde V^* ,
\ee
where $w_j$ and $\tilde w_j$
are the associated right and left eigenvectors of $M_\xi$.

By assumption, $\lambda=0$ is a nonsemisimple eigenvalue
of $L_0$, so that $M_0$ is nilpotent but nonzero, possessing a
nontrivial associated Jordan chain.
Moreover, using the fact that $\left<(1,0)^T, C\right>=0$, where, again, $\left<\cdot,\cdot\right>$ represents
the $L^2(x)$ inner product over the finite domain $[0,X]$, the function $\tilde f\equiv (1,0)^T$
by direct computation lies
in the kernel of $L_0^*=\left(\partial_xB^*\partial_x+A^*\partial_x+C^*\right)$,
%
%
we have that the
two-dimensional zero eigenspace of $L_0$ is consists precisely
of a one-dimensional kernel and a single Jordan chain of height two.
Moreover, by translation-invariance (differentiate in $x$
the profile equation \eqref{e:profile}), we have $L_0\bar u'=0$,
so that $\bar u'$ lies in the right kernel of $L_0$.

Now, recall assumption (H2) that $H: \,
\R^5  \rightarrow \R^2$	
taking $(X,c,q,b) \mapsto (\tau,\tau')(X,c,b; X)-b$
is full rank at $(\bar{X},\bar c, \bar b)$,
where $(\tau,\tau')(\cdot;\cdot)$ is the solution operator of \eqref{e:profile}.
The fact that $\ker L_0$ is one-dimensional implies that
the restriction
$\check H$ taking
$
(b, q)  \mapsto u(X; b, c, q)-b
$
for fixed $(X,c)$
is also full rank, i.e., $H$ is full rank with respect to
the specific parameters $(X,c)$.
Applying the Implicit Function Theorem and counting dimensions,
we find that the set of periodic solutions, i.e., the inverse
image of zero under map $H$ local to $\bar u$
is a smooth three-dimensional manifold
$\{\bar u^\beta (x-\alpha-c(\beta )t)\}$,
with $\alpha\in \RR$, $\beta \in \RR^{2}$.
Moreover, two dimensions may be parametrized by $(X,c)$,
or without loss of generality $\beta  =(X,c)$.

Fixing $X$ and varying $c$, we find by differentiation of \eqref{e:profile}
that $f_*:=-\partial_s \bar U$ satisfies the generalized eigenfunction
equation
$$
L_0 f_*= \bar U'.
$$
Thus, $\bar U'$ spans the eigendirection lying at the base of the
Jordan chain, with the generalized zero-eigenfunction of $L_0$
corresponding to variations in speed along the manifold of periodic
solutions about $\bar U$.
Without loss of generality, therefore, we may take
$\tilde v_{2}$ to be constant at $\xi=0$, and
$v_{1} \equiv \bar U'$ at $\xi=0$.

Noting as in \cite{JZ3} the fact that, by \eqref{coeffs},
\ba\label{comp}
A\bar U_x= f(\bu)_x - (\partial_x B(\bu))\bar U_x
&= \partial_x( f(\bu)_x - B(\bU)\bU_x) +B(\bu)\partial_x \bu_x
\\
& = g(\bU) +B(\bu)\partial_x \bu_x ,
\ea
and so by $e_2g=0$, $\partial_x e_2=0$, we have
$$
\langle e_2,L^1 \bar U'\rangle=
\langle e_2,(\partial_{x} B+B\partial_x -A  )\bar U'\rangle=
\langle e_2, \partial_{x}B  \bar U' \rangle
\equiv 0
$$
for $e_2:=(0,1)$,
where $\langle \cdot, \cdot\rangle$
denotes $L^2(x)$ inner product on the interval $x\in [0,X]$,
we find under this normalization that \eqref{Mpert} has the special structure
\ba\label{Mstructure}
M_0=\bp
 0 & 1\\
 0 & 0\ep,
\qquad
M_1=\bp
 * & *\\
 0 & *\ep.
\ea

Now, rescaling \eqref{Mpert} as
\be\label{rescale}
\check M_\xi:= (i\xi)^{-1} S(\xi)M_\xi S(\xi)^{-1},
\ee
where
\be\label{S}
S:=\bp
 i \xi & 0\\
 0 & 1\\
\ep,
\ee
we obtain
\be\label{checkMpert}
\check M_\xi=
\check M_0 + i\xi\check M_1 + O(\xi^2),
\ee
where $\check M_j= \check M_j$ like the original
$M_j$ are constant
and the eigenvalues $m_j(\xi)$ of $\hat M_\xi$ are
$(i\xi)^{-1}\lambda_j(\xi)$.

As the eigenvalues $m_j$ of $\check M_\xi$ are continuous,
the eigenvalues $\lambda_j(\xi)=i\xi m_j$ are differentiable
at $\xi=0$ as asserted in the introduction.
Moreover, by (H3), the eigenvalues $\check\lambda_j(0)$
of $\check M_0$ are distinct, and so they perturb analytically
in $\xi$, as do the associated right and left eigenvectors
$z_j$ and $\tilde z_j$.
Undoing the rescaling \eqref{rescale},
and recalling \eqref{vecrel}, we obtain the result.
\end{proof}

\section{Linearized stability estimates}\label{linests}
By standard spectral perturbation theory \cite{K}, the total
eigenprojection $P(\xi)$ onto the eigenspace of $L_\xi$
associated with the eigenvalues $\lambda_j(\xi)$, $j=1,2$
described in the previous section
is well-defined and analytic in $\xi$ for $\xi$ sufficiently small,
since these (by discreteness of the spectra of $L_\xi$) are
separated at $\xi=0$ from the rest of the spectrum of $L_0$.
By (D2), there exists an $\eps>0$ such that $\Re\lambda_j(\xi)\leq-\theta|\xi|^2$ for $0<|\xi|<2\eps$.
With this choice of $\eps$, we introduce a smooth cutoff function $\phi(\xi)$ that
is identically one for $|\xi|\le \eps$ and identically
zero for $|\xi|\ge 2\eps$, $\eps>0$ sufficiently small,
we split the solution operator $S(t):=e^{Lt}$ into
a low-frequency part
\be\label{SI}
S^I(t)u_0:=
\Big(\frac{1}{2\pi }\Big) \int_{-\pi}^{\pi}
e^{i\xi \cdot x}
\phi(\xi)P(\xi) e^{L_\xi t}\hat u_0(\xi, x) d\xi
\ee
and the associated high-frequency part
\be\label{SII}
S^{II}(t)U_0:=
\Big(\frac{1}{2\pi }\Big) \int_{-\pi}^{\pi}
e^{i\xi \cdot x}
\big(I-\phi P(\xi)\big)
e^{L_\xi t}\hat U_0(\xi, x)
d\xi.
\ee

Our strategy is to treat the high- and low-frequency operators separately since,
as is standard, the low-frequency analysis is considerably more complicated
than the corresponding high-frequency analysis.
That being said,
we begin by deriving bounds on the solution operator at
high frequency.

\subsection{High-frequency bounds}\label{HF}
By boundedness of the resolvent on compact subdomains of the resolvent
set, equivalence (as the zero-set of an associated Evans function \cite{N2,OZ1})
of $H^1$ and $L^2$ spectrum,
Assumption (D2),
and the high-frequency estimates of Lemma \ref{resbd},
we have for $|\xi|$ bounded away from zero and waves satisfying the amplitude condition \eqref{froudebd} that
the resolvent $(\lambda-L_\xi)^{-1}$ is uniformly bounded
from $H^1\to H^1$ for $\Re \lambda=-\eta< \theta<0$,
whence, by Pr\"uss' Theorem \cite{Pr},
$\|e^{L_\xi t}f\|_{H^1}\le Ce^{-\theta t} \|f\|_{H^1}$.

For $|\xi|$ sufficiently small, on the other hand, $\phi\equiv 1$,
and $I-\phi(\xi)P=I-P=Q$, where $Q$ is the eigenprojection
of $L_\xi$ associated with eigenvalues complementary to $\lambda_j(\xi)$,
which by spectral separation of $\lambda_j(\xi)$ from the
remaining spectra of $L_\xi$, have real parts strictly less than zero.
Applying Pr\"uss' Theorem to the restriction of $L_\xi$
to the Hilbert space given by the range of $Q$, we find, likewise,
that
$\| e^{L_\xi t} (I-\phi(\xi)) f\|_{H^1}=
\|e^{L_\xi t}Qf\|_{H^1} \le Ce^{-\theta t} \|f\|_{H^1}$.

Combining these observations,
we have the exponential decay bound
$$
\|e^{L_\xi t}(I-\phi P(\xi))f\|_{H^1([0,X])}\le Ce^{-\theta t}\|f\|_{H^1([0,X])}$$
for $\theta>0$ as in (D2) and $C>0$, from which it follows
\ba\label{semigp}
\|e^{L_\xi t}(I-\phi P(\xi))\partial_{x}^l f\|_{H^{1}([0,X])}
&\le   C e^{-\theta t}\|f\|_{H^{l+1}([0,X])}\\
\ea
for $0\le l\le K$ ($K$ as in (H1)).
Together with (\ref{iso}), these give immediately the
following estimates.

\begin{proposition}[\cite{OZ4}]\label{p:hf}
Under assumptions (H1)--(H4), (D1)--(D2), and assuming the amplitude condition \eqref{froudebd} holds,
there exists constants $\theta$, $C>0$, such that for all
all $t>0$, $2\le p\le \infty$, $0\le l\le 2$, $0\le m\le 2$, we have the high-frequency estimates
\ba\label{SIIest}
\|S^{II}(t)\partial_x^l f\|_{L^2(x)}&\le
Ce^{-\theta t}\|f\|_{H^{l+1}(x)},\\
\|S^{II}(t) \partial_x^m f\|_{L^p(x)}&\le
C e^{-\theta t}\|f\|_{H^{m+2}(x)}.
\ea
\end{proposition}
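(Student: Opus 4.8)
The plan is to reduce both estimates to the per-frequency bound \eqref{semigp}, which is already in hand, via the inverse Bloch representation \eqref{IBFT} together with the definition \eqref{SII} of $S^{II}$; what remains is then Parseval and Hausdorff--Young bookkeeping. Recall from the preceding paragraph that, uniformly in $\xi\in[-\pi,\pi]$, one has $\|e^{L_\xi t}(I-\phi P(\xi))h\|_{H^1([0,X])}\le Ce^{-\theta t}\|h\|_{H^1([0,X])}$, whence $\|e^{L_\xi t}(I-\phi P(\xi))\partial_x^l h\|_{H^1([0,X])}\le Ce^{-\theta t}\|h\|_{H^{l+1}([0,X])}$ for $0\le l\le K$.

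For the $L^2$ estimate, set $u:=S^{II}(t)\partial_x^l f$ with $0\le l\le 2$ (permissible since $K\ge3$). By \eqref{SII} its Bloch transform is $\hat u(\xi,\cdot)=(I-\phi P(\xi))e^{L_\xi t}\,\widehat{\partial_x^l f}(\xi,\cdot)$, and since $\widehat{\partial_x^l f}(\xi,\cdot)=(\partial_x+i\xi)^l\hat f(\xi,\cdot)$ has $H^1([0,X])$-norm bounded by $C\|\hat f(\xi,\cdot)\|_{H^{l+1}([0,X])}$ for $|\xi|\le\pi$, \eqref{semigp} gives the pointwise-in-$\xi$ bound $\|\hat u(\xi,\cdot)\|_{L^2([0,X])}\le Ce^{-\theta t}\|\hat f(\xi,\cdot)\|_{H^{l+1}([0,X])}$. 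Taking $L^2$ in $\xi$ and using the isometry \eqref{iso} --- applied to $u$, and termwise to identify $\|\hat f\|_{L^2(\xi;H^{l+1}([0,X]))}$ with a constant multiple of $\|f\|_{H^{l+1}(x)}$ --- yields the first inequality.

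For the $L^p$ estimate, $2\le p\le\infty$, I would replace \eqref{iso} by the Hausdorff--Young inequality for the Bloch transform, $\|u\|_{L^p(x)}\le C\big(\int_{-\pi}^{\pi}\|\hat u(\xi,\cdot)\|_{L^p([0,X])}^{p'}\,d\xi\big)^{1/p'}$ with $1/p+1/p'=1$, which one proves by realizing the $\xi$-integral in \eqref{IBFT} over each period as a Fourier series indexed by the period number, invoking the classical Hausdorff--Young inequality for Fourier series (valid since $p'\le2$), and then using Minkowski's integral inequality to interchange the $L^p(x)$ and $L^{p'}(\xi)$ norms. With $u:=S^{II}(t)\partial_x^m f$, the one-dimensional Sobolev embedding $H^1([0,X])\hookrightarrow L^p([0,X])$ combined with the pointwise-in-$\xi$ bound of the previous step gives $\|\hat u(\xi,\cdot)\|_{L^p([0,X])}\le Ce^{-\theta t}\|\hat f(\xi,\cdot)\|_{H^{m+1}([0,X])}$, and since $p'\le2$ and $[-\pi,\pi]$ has finite measure, Jensen's inequality bounds $\big(\int\|\hat f(\xi,\cdot)\|_{H^{m+1}([0,X])}^{p'}d\xi\big)^{1/p'}$ by $C\|f\|_{H^{m+1}(x)}$; this implies the claimed estimate, indeed with a derivative to spare, the stated $H^{m+2}$ being non-sharp.

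The assembly above is routine. The substantive obstacle lies upstream, in the uniform high-frequency $H^1\to H^1$ resolvent bound of Lemma \ref{resbd}: this is precisely where the amplitude condition \eqref{froudebd} is used --- reflecting symmetrizability of the first-order coefficient $A$ in \eqref{coeffs} --- and it does not appear possible to dispense with it; Pr\"uss' Theorem then converts this resolvent bound into the semigroup decay underlying \eqref{semigp}. Within the present proposition, the only point requiring real care is the $L^p$ case, where the correct tool is the Hausdorff--Young inequality for the Bloch transform rather than the $L^2$ isometry \eqref{iso}.
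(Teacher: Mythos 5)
Your proof is correct and, for the $L^2$ estimate, identical in substance to the paper's (Parseval/Bloch isometry \eqref{iso} applied together with the per-frequency bound \eqref{semigp}). Where you diverge is in the $L^p$ case, $2\le p\le\infty$. The paper's route is: establish $p=\infty$ by Sobolev embedding $H^1(\RM)\hookrightarrow L^\infty(\RM)$ (with the $H^1(\RM)$ bound coming again from the Bloch $H^1$--Parseval isometry and \eqref{semigp}), then obtain intermediate $p$ by Riesz--Thorin $L^p$ interpolation between the $p=2$ and $p=\infty$ endpoints. You instead prove a Hausdorff--Young inequality for the Bloch transform, $\|u\|_{L^p(x)}\le C\big(\int_{-\pi}^{\pi}\|\hat u(\xi,\cdot)\|_{L^p([0,X])}^{p'}\,d\xi\big)^{1/p'}$, by decomposing the line into translates of the periodicity cell, applying classical Hausdorff--Young to the Fourier series in the cell index, and interchanging norms by Minkowski; you then land the estimate with the periodic Sobolev embedding $H^1([0,X])\hookrightarrow L^p([0,X])$, H\"older/Jensen on the finite interval $[-\pi,\pi]$ to pass from $L^{p'}(\xi)$ to $L^2(\xi)$, and the Bloch $H^{m+1}$--Parseval isometry. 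Both arguments are valid; yours is a single direct estimate covering all $p\ge 2$ at once rather than an endpoint-plus-interpolation argument, at the cost of having to establish the Bloch Hausdorff--Young inequality (which is standard but is not stated in the paper). Your observation that the result actually holds with $\|f\|_{H^{m+1}}$ in place of $\|f\|_{H^{m+2}}$ is also correct --- the same sharpening is available in the paper's route, since the Bloch $H^1$--Parseval identity already yields $\|S^{II}(t)\partial_x^m f\|_{H^1(\RM)}\le Ce^{-\theta t}\|f\|_{H^{m+1}}$ --- so the stated $H^{m+2}$ is simply a non-sharp (and harmless) overestimate. Finally, your remark that the substantive input is the uniform high-frequency resolvent bound of Lemma \ref{resbd}, converted to semigroup decay by Pr\"uss' theorem, and that this is where the amplitude condition \eqref{froudebd} is genuinely used, accurately reflects the structure of Section \ref{HF}.
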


\begin{proof}
For $m,l=0$,
the first inequalities follow immediately by (\ref{iso})
and \eqref{semigp}.
The second follows for $p=\infty$ by Sobolev embedding.
The result for general $2\le p\le \infty$ then follows by
$L^p$ interpolation.
A similar argument applies for $1\le l,  m\le 2$ by higher-derivative
versions of \eqref{semigp}, which follow in exactly the same way.
\end{proof}

\subsection{Low-frequency bounds}\label{LF}

As noted above, analysis of the solution operator at low frequency
is considerably more complicated than the
high-frequency bounds outlined above.
To aid in our analysis, we introduce the Green kernel
\be\label{GI}
G^I(x,t;y):=S^I(t)\delta_y(x)
\ee
associated with $S^I$, and the corresponding kernel
\be\label{GIxi}
[G^I_\xi(x,t;y)]:=\phi(\xi)P(\xi) e^{L_\xi t}[\delta_{y}(x)]
\ee
appearing within the Bloch representation
of $G^I$, where the brackets on $[G_\xi]$ and $[\delta_y]$
denote the periodic extensions of these functions onto the whole line.
Then, we have the following descriptions of $G^I$, $[G^I_\xi]$,
deriving from the
spectral expansion \eqref{e:surfaces} of $L_\xi$ near $\xi=0$.

\begin{proposition}[\cite{OZ4}]\label{kernels}
Under assumptions (H1)--(H4) and (D1)--(\DDD),
\ba\label{Gxi}
[G^I_\xi(x,t;y)]&= \phi(\xi)\sum_{j=1}^{2}e^{\lambda_j(\xi)t}
q_j(\xi,x)\tilde q_j(\xi, y)^*,\\
G^I(x,t;y)&=
\Big(\frac{1}{2\pi }\Big)  \int_{\R} e^{i\xi \cdot (x-y)}
[G^I_\xi(x,t;y)] d\xi \\
&=
\Big(\frac{1}{2\pi }\Big)  \int_{\R}
e^{i\xi \cdot (x-y)}
\phi(\xi)
\sum_{j=1}^{2}e^{\lambda_j(\xi)t} q_j(\xi,x)\tilde q_j(\xi, y)^*
d\xi,
\ea
where $*$ denotes matrix adjoint, or complex conjugate transpose,
$q_j(\xi,\cdot)$ and $\tilde q_j(\xi,\cdot)$
are right and left eigenfunctions of $L_\xi$ associated with eigenvalues
$\lambda_j(\xi)$ defined in \eqref{e:surfaces},
normalized so that $\langle \tilde q_j,q_j\rangle\equiv 1$.
\end{proposition}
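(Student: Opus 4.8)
The plan is to unwind the definition \eqref{SI} of $S^I$ together with the Bloch representation \eqref{Bloch}, compute the Bloch symbol of the Dirac mass $\delta_y$, and then diagonalize the (effectively $2\times2$) operator $e^{L_\xi t}P(\xi)$ using the eigenbasis furnished by Lemma \ref{blochfacts}. First I would record the Bloch transform of $\delta_y$: with the Fourier convention $\hat u(\zeta)=\int e^{-i\zeta x}u(x)\,dx$ implicit in \eqref{Bloch} one has $\widehat{\delta_y}(\zeta)=e^{-i\zeta y}$, hence
$$\widehat{\delta_y}(\xi,x)=\sum_k e^{2\pi i kx}e^{-i(\xi+2\pi k)y}=e^{-i\xi y}\sum_k e^{2\pi ik(x-y)}=e^{-i\xi y}[\delta_y(x)],$$
the last equality by Poisson summation, $[\delta_y]$ denoting the $1$-periodic extension (recall $X=1$). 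Substituting into \eqref{SI} and invoking the definition \eqref{GIxi} of $[G^I_\xi]$ gives at once
$$G^I(x,t;y)=S^I(t)\delta_y(x)=\Big(\tfrac{1}{2\pi}\Big)\int_{-\pi}^{\pi}e^{i\xi(x-y)}\,[G^I_\xi(x,t;y)]\,d\xi,$$
and, since $\phi$ is supported in $|\xi|\le 2\eps<\pi$, the interval of integration may be enlarged to all of $\R$; this is the second displayed identity of the proposition.

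It then remains to evaluate $[G^I_\xi]=\phi(\xi)P(\xi)e^{L_\xi t}[\delta_y]$. Since $P(\xi)$ is the total spectral projection of $L_\xi$ onto the eigenspace of $\{\lambda_1(\xi),\lambda_2(\xi)\}$, it commutes with $e^{L_\xi t}$, and $e^{L_\xi t}|_{\operatorname{ran}P(\xi)}$ is the exponential of the $2\times2$ operator $L_\xi|_{\operatorname{ran}P(\xi)}$. For $0<|\xi|$ small, (H3) forces $\lambda_1(\xi)\ne\lambda_2(\xi)$, so by Lemma \ref{blochfacts} this block is diagonalized by the biorthogonal eigenbasis $\{q_j(\xi,\cdot)\}$, $\{\tilde q_j(\xi,\cdot)\}$ with $\langle\tilde q_j,q_k\rangle=\delta^k_j$ and $L_\xi q_j=\lambda_j(\xi)q_j$; hence $P(\xi)=\sum_j q_j(\xi,\cdot)\langle\tilde q_j(\xi,\cdot),\cdot\rangle$ and $e^{L_\xi t}q_j=e^{\lambda_j(\xi)t}q_j$. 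Applying $P(\xi)e^{L_\xi t}$ to the matrix-valued kernel $[\delta_y]$ and using $\langle\tilde q_j(\xi,\cdot),[\delta_y]\rangle=\tilde q_j(\xi,y)^*$ yields
$$[G^I_\xi(x,t;y)]=\phi(\xi)\sum_{j=1}^2 e^{\lambda_j(\xi)t}q_j(\xi,x)\,\tilde q_j(\xi,y)^*$$
for $\xi\ne0$, and then for all $\xi$ by analyticity of the left-hand side near $\xi=0$. Combined with the previous paragraph, this is the assertion, with the normalization $\langle\tilde q_j,q_j\rangle\equiv1$ as a special case of the biorthogonality.

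The computation is essentially bookkeeping, so the one point I would flag — precisely because it matters downstream — is that the individual summands $q_j(\xi,x)\tilde q_j(\xi,y)^*$ are \emph{not} regular at $\xi=0$: by Lemma \ref{blochfacts}, $\beta_{j,1}$ has a simple pole $\sim\xi^{-1}$ while $\tilde\beta_{j,1}\sim\xi$, so the mixed terms carry an $O(\xi^{-1})$ singularity that cancels only upon summing over $j=1,2$ (consistently with $[G^I_\xi]$ itself being analytic near $\xi=0$, being built from the analytic objects $P(\xi)$ and $e^{L_\xi t}$). This is harmless for the identity as stated — proved away from $\xi=0$ and extended by continuity — but it is exactly the mechanism producing the errorfunction (rather than purely Gaussian) contribution to $G^I$ anticipated in the introduction, and it must be respected when \eqref{Gxi} is fed into the pointwise kernel bounds of Corollary \ref{greenbds}. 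Convergence of the $\xi$-integrals and the legitimacy of interchanging $\sum_j$ with $\int d\xi$ follow from the same analyticity of $\lambda_j$, $q_j$, $\tilde q_j$ on $\operatorname{supp}\phi$ together with the parabolic bound $\Re\lambda_j(\xi)\le-\theta|\xi|^2$ of (D2).
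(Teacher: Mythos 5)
Your proposal is correct and follows essentially the same route as the paper: compute the Bloch transform of $\delta_y$ via Poisson summation, substitute into \eqref{SI} to get the integral representation, and invoke the spectral decomposition of $P(\xi)e^{L_\xi t}$ on the low-frequency eigenspace. The paper's proof treats \eqref{Gxi}(i) as an immediate consequence of the spectral calculus for $C^0$-semigroups, whereas you spell out the rank-one projection formula and note the need to extend across $\xi=0$ by analyticity; your closing remark on the $O(\xi^{-1})$ singularity of the individual summands is a useful observation that the paper defers to the proof of Proposition \ref{Gbds}, but it does not alter the argument here.
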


\begin{proof}
Relation  (\ref{Gxi})(i) is immediate from the spectral decomposition
for $C^0$ semigroups at eigenvalues of finite multiplicity, and the fact that $\lambda_j$
are distinct for $|\xi|>0$ sufficiently small, by (H3).
Substituting (\ref{GI}) into (\ref{SI})
and computing
\be\label{comp1}
\widehat{\delta_y}(\xi,x)=
\sum_k e^{2\pi i kx}\widehat{\delta_y}(\xi + 2\pi k e_1)=
\sum_k e^{2\pi i kx}e^{-i\xi \cdot y-2\pi i ky}
= e^{-i\xi \cdot y}[\delta_{y}(x)],
\ee
where the second and third equalities follow from the fact that the Fourier transform of either the continuous or discrete
the delta-function is unity, 
we obtain
\ba\label{GIsub}
G^I(x,t;y)&=
\Big(\frac{1}{2\pi }\Big)  \int_{-\pi}^{\pi}
e^{i\xi \cdot x} \phi P(\xi) e^{L_\xi t} \widehat{\delta_y}(\xi,x)d\xi\\
\nonumber
&=
\Big(\frac{1}{2\pi }\Big)  \int_{-\pi}^{\pi}
e^{i\xi \cdot (x-y)}  \phi P(\xi)e^{L_\xi t} [\delta_{y}(x)] d\xi,
\ea
yielding (\ref{Gxi})(ii) by (\ref{GIxi})(i) and the fact that $\phi$
is supported on $[-\pi,\pi]$.
\end{proof}


We now state our main result for this section, which
uses
the spectral representation of $G^I$ and $[G^I_{\xi}]$
described in Proposition \ref{kernels} to
decompose
the low-frequency
Green kernel into a leading order piece
(corresponding to translational modulation) plus a faster decaying residual.
Underlying this decomposition is the fundamental relation
\be
G(x,t;y)=
\left(\frac{1}{2\pi }\right)\int_{-\pi}^{\pi}\int_{\R^{d-1}}
e^{i\xi \cdot (x-y)}[G_\xi(x_1,t;y_1)]d\xi,
\ee
which
serves as
%
the crux of the low-frequency analysis
both here and in \cite{OZ2,JZ3}.

\begin{proposition} \label{Gbds}
Under assumptions (H1)-(H4) and (D1)-(\DDD),
the low-frequency Green function $G^I(x,t;y)$ of \eqref{GI} decomposes as
$G^I=E+\tilde G^I$,
\be\label{E1}
E=\bar U'(x)e(x,t;y),
\ee
where, for some $C>0$, all $t>0$,
\ba\label{GIest}
\sup_{y}\|\tilde G^I(\cdot, t,;y) \|_{L^p(x)}
 &\le  C (1+t)^{-\frac{1}{2}(1-\frac{1}{p})}\\
\sup_{y}\|\partial_{y}^r \tilde G^I(\cdot, t,;y) \|_{L^p(x)},
\quad
\sup_{y}\|\partial_{t}^r \tilde G^I(\cdot, t,;y) \|_{L^p(x)}
 &\le  C (1+t)^{-\frac{1}{2}(1-\frac{1}{p})-\frac{1}{2}}\\
\sup_{y}\| \tilde G^I(\cdot, t,;y)(0,1)^T \|_{L^p(x)}
 &\le  C (1+t)^{-\frac{1}{2}(1-\frac{1}{p})-\frac{1}{2}}\\
\ea
for $p\ge 2$, $1\le r\le 2$,
\ba\label{ederest}
\sup_{y}\| \partial_x^j \partial_t^l
\partial_{y}^r e(\cdot, t,;y) \|_{L^p(x)}
 &\le  C (1+t)^{-\frac{1}{2}(1-\frac{1}{p})- \frac{(j+l)}{2}-\frac{1}{2}}\\
\ea
for $p\ge 2$, $0\le j, l$, $j+l\le K+1$, $1\le r\le 2$,
and
\ba\label{eest}
\sup_{y}\|\tilde \partial_x^j \partial_t^l e(\cdot, t,;y) \|_{L^p(x)}
 &\le  C (1+t)^{-\frac{1}{2}(1-\frac{1}{p})-\frac{(j+l)}{2}}\\
\ea
for $0\le j, l$, $j+l\le K+1$,
provided that $p\ge 2$ and $j+l\ge 1$ or $p=\infty$.
Moreover, $e(x,t;y)\equiv 0$ for $t\le 1$.
\end{proposition}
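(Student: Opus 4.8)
The plan is to derive all bounds from the Bloch-kernel representation in Proposition~\ref{kernels} together with the fine spectral description of Lemma~\ref{blochfacts}, following the template of \cite{OZ4,JZ3}. First I would make the decomposition explicit. Using Lemma~\ref{blochfacts}, write $q_j(\xi,x)=\sum_{k}\beta_{j,k}(\xi)v_k(\xi,x)$ and $\tilde q_j(\xi,y)=\sum_k\tilde\beta_{j,k}(\xi)\tilde v_k(\xi,y)$, and recall that $v_1(0,\cdot)\equiv\bar U'(\cdot)$, $\tilde v_2(0,\cdot)$ is constant, and that $\xi\beta_{j,1}$, $\beta_{j,2}$, $\xi^{-1}\tilde\beta_{j,1}$, $\tilde\beta_{j,2}$ are analytic. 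Substituting into \eqref{Gxi}(ii), the dominant contribution is the piece in which the factor $v_1(0,x)=\bar U'(x)$ multiplies $\xi^{-1}\tilde\beta_{j,1}$; collecting exactly this term defines $E=\bar U'(x)e(x,t;y)$ with
\be\label{edef}
e(x,t;y)=\Big(\frac{1}{2\pi}\Big)\int_\R e^{i\xi(x-y)}\phi(\xi)\sum_{j=1}^2 e^{\lambda_j(\xi)t}\,\beta_{j,1}(\xi)\,\big(\tilde\beta_{j,1}(\xi)/(i\xi)\big)\,[\tilde v_2(0,y)]^* \;d\xi
\ee
(up to the harmless replacement of $v_1(\xi,x)$, $\tilde v_k(\xi,y)$ by their values at $\xi=0$, the error of which is absorbed into $\tilde G^I$), and everything left over is $\tilde G^I$. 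By construction each summand of $\tilde G^I$ carries either an extra power of $\xi$ relative to $e$, or a $v$- or $\tilde v$-difference factor $(v(\xi)-v(0))=O(\xi)$, or a second eigenfunction component that is $O(1)$ rather than $O(\xi^{-1})$ — in every case one net extra factor of $\xi$, which is the source of the advertised gain of $(1+t)^{-1/2}$.

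Next I would run the scalar oscillatory-integral estimates. On $\mathrm{supp}\,\phi$ one has $\Re\lambda_j(\xi)\le-\theta|\xi|^2$ by (D2), and $\lambda_j(\xi)=-ia_j\xi-b_j\xi^2+O(|\xi|^3)$ with $a_j$ real and distinct (H3), $\Re b_j>0$; the amplitudes $\beta_{j,1}$, $\tilde\beta_{j,1}/(i\xi)$ and their $\xi$-derivatives are bounded on the support of $\phi$. For $e(x,t;y)$ this is exactly the standard heat-kernel-with-drift bound: moving the contour / integrating by parts in $\xi$ gives, for each $j$, a Gaussian in $(x-y-a_jt)/\sqrt t$ times $t^{-1/2}$, and since $\phi$ is compactly supported the $t\le 1$ part is identically zero (giving the final ``$e\equiv 0$ for $t\le1$'' assertion directly from the definition, once we also note that the low-frequency cutoff makes the integrand entire and we may translate the contour). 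Differentiation: each $\partial_x$ or $\partial_t$ pulls down a factor $i\xi$ or $\lambda_j(\xi)=O(|\xi|)$, hence a further $t^{-1/2}$; each $\partial_y$ pulls down $-i\xi$ plus a $\partial_y\tilde v$-term that is itself $O(1)$ but, crucially, still sits against the $\xi^{-1}$ amplitude, so one must check that $\partial_y$ differentiation of $e$ genuinely gains half a power — here one uses that $\partial_y e$ has the extra $\xi$ from $e^{i\xi(x-y)}$ plus a term with amplitude $\beta_{j,1}\tilde\beta_{j,1}/(i\xi)\cdot\partial_y\tilde v_2$, and the latter is handled because $\tilde v_2(0,\cdot)$ is constant so $\partial_y\tilde v_2(0,\cdot)=0$, killing the dangerous term. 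This yields \eqref{ederest} and the $j+l\ge1$ (or $p=\infty$) cases of \eqref{eest}; the remaining $L^p$, $p\ge2$, bound for undifferentiated $e$ is just the fact that a single Gaussian is bounded in $L^p_x$ uniformly in $t\ge1$, $y$. Interpolation between $p=2$ (Parseval/Hausdorff–Young on the Bloch side) and $p=\infty$ (Sobolev embedding in $x$ on $[0,X]$, exactly as in Proposition~\ref{p:hf}) delivers the stated $(1+t)^{-\frac12(1-1/p)}$ rates for all $p\ge2$.

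For $\tilde G^I$ the same machinery applies verbatim, now with one extra $\xi$ in every term, giving the uniform gain recorded in \eqref{GIest}; the third line of \eqref{GIest}, the bound on $\tilde G^I(\cdot,t;y)(0,1)^T$, is the place where the nonconservative structure pays off: contracting on the right against $e_2=(0,1)^T$ hits $\tilde v_k(\xi,y)^* e_2$, and since $\tilde v_2(0,\cdot)$ is constant with $\langle\tilde v_2(0),e_2\rangle$ — wait, more precisely the relevant cancellation is that the left null direction $\tilde f=(1,0)^T$ of $L_0$ annihilates the $e_2$-column, i.e. the leading $\xi^{-1}$ amplitude is multiplied by $\tilde v_2(0,y)^* e_2$ and the structural identity $\langle e_2, L^1\bar U'\rangle=0$ used in Lemma~\ref{blochfacts} forces this contraction to vanish at $\xi=0$, yielding the extra $\xi$ and hence the extra $t^{-1/2}$. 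I would isolate that cancellation as a short lemma. The main obstacle, and the step deserving the most care, is precisely this bookkeeping: verifying that \emph{after} every differentiation ($\partial_y^r$, $\partial_t^r$) and \emph{after} the $e_2$-contraction the residual $\tilde G^I$ still has a net nonnegative number of powers of $\xi$ beyond what the desired decay rate requires, uniformly in the expansion — in particular that no term of the form (bad $\xi^{-1}$ amplitude)$\times$($\partial_y$ of a non-constant eigenfunction) survives, which is guaranteed only by the special normalizations $v_1(0)=\bar U'$, $\tilde v_2(0)=\text{const}$ from Lemma~\ref{blochfacts}. Once that accounting is pinned down, all the estimates reduce to the single scalar fact that $\int_\R e^{i\xi z-b\xi^2 t}(i\xi)^m\,d\xi = O(t^{-(m+1)/2})$ in $L^p_z$ for $p\ge2$, together with Parseval on $[-\pi,\pi]$ and Sobolev embedding on $[0,X]$, exactly as in \cite{OZ4}.
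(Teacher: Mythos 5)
Your overall strategy matches the paper's: split $G^I$ into the coefficient of $v_1(0,\cdot)=\bar U'$ (giving $\bar U'(x)e(x,t;y)$) plus remainder terms that carry one extra net power of $\xi$, estimate via the Gaussian-in-$\xi$ decay from (D2), and observe that $\partial_y$, $\partial_t$, and right-contraction with $(0,1)^T$ each bring down a factor $O(|\xi|)$ yielding $(1+t)^{-1/2}$ improvement. However, there are two substantive problems.

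First, your bookkeeping of which Bloch amplitudes are singular is wrong, and this corrupts the hardest step. You assert that ``the amplitudes $\beta_{j,1}$, $\tilde\beta_{j,1}/(i\xi)$ and their $\xi$-derivatives are bounded on the support of $\phi$.'' But Lemma~\ref{blochfacts} says $\xi\beta_{j,1}$ is analytic, i.e.\ $\beta_{j,1}=O(\xi^{-1})$ is \emph{unbounded} near $\xi=0$; the $O(1)$ left amplitude paired with the constant $\tilde v_2$ is $\tilde\beta_{j,2}$, not $\tilde\beta_{j,1}/(i\xi)$, so your displayed formula for $e$ has the wrong factors. The $\xi^{-1}$ singularity of $\beta_{j,1}$ is precisely what makes the undifferentiated $e$ \emph{not} a Gaussian: the integrand $\xi^{-1}e^{\check\lambda_j(\xi)t}$ is not Lebesgue-integrable, and the paper's proof necessarily goes through a principal-value decomposition \eqref{princterm}, identifying $e$ at leading order as a sum of errorfunctions \eqref{eq:errfns} (antiderivatives of Gaussians), and relies on the individual-mode conditional convergence combined with the cancellation $\sum_j c_j=0$ to justify the split. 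Your claim that the $j=l=0$ bound ``is just the fact that a single Gaussian is bounded in $L^p_x$'' skips this entirely and would not give the right $L^p$ scaling; in particular for $p<\infty$ the undifferentiated $e$ is not controlled (this is why \eqref{eest} restricts to $j+l\ge1$ or $p=\infty$), a restriction your argument does not explain.

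Second, a smaller point: for the $(0,1)^T$ bound in \eqref{GIest}(iii), the cancellation you need is simply that in every surviving term of $\tilde G^I$ the $y$-dependence arrives through $\tilde v_l(\xi,y)^*$, and for $l=2$ this function is identically the constant $(1,0)^T$ (so its contraction with $(0,1)^T$ vanishes), while for $l=1$ one has $\tilde\beta_{j,1}=O(|\xi|)$ already. The identity $\langle e_2, L^1\bar U'\rangle=0$ that you invoke is what produces the upper-triangular structure of $M_1$ in Lemma~\ref{blochfacts}; it is not the mechanism for the $(0,1)^T$ gain here, and citing it obscures the (much more elementary) reason the contraction vanishes. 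If you fix the singular/regular bookkeeping, keep track of the index $l$ in the remainder terms, and replace the Gaussian heuristic for undifferentiated $e$ by the errorfunction/principal-value computation, the proof goes through as in the paper.
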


\br\label{newrmk}
\textup{
The crucial new observation in the nonconservative case treated
here is \eqref{GIest}(iii), which asserts that sources entering
in the nonconservative second coordinate of the linearized equations
experience decay equivalent to that of a differentiated source
entering in the first coordinate.
This is what allows us to treat non-divergence-form source terms
arising in the second equation of the eventual perturbation equations.
}
\er

\begin{proof}
Recalling \eqref{Gxi} and Lemma \ref{blochfacts}, we have
\ba\label{Gnew}
G^I(x,t;y)&=
\Big(\frac{1}{2\pi }\Big) \int_{\R}
e^{i\xi \cdot (x-y)}
\phi(\xi)
\sum_{j=1}^{2}e^{\lambda_j(\xi)t} q_j(\xi,x)\tilde q_j(\xi, y)^*
d\xi\\
&=
\Big(\frac{1}{2\pi }\Big) \int_{\R}
e^{i\xi \cdot (x-y)}
\phi(\xi)
\sum_{j,k,l=1}^{2}e^{\lambda_j(\xi)t} \beta_{j,k}v_k(\xi,x)
\tilde \beta_{j,l}\tilde v_l(\xi, y)^*
d\xi,
\ea
the fact that $\beta_{j,1}=O(\xi^{-1})$ suggests the $k=1$ terms (corresponding to translation) dominate the low-frequency
Green kernel.  With this motivation, we define
\ba\label{enew}
\tilde e(x,t;y)&=
\Big(\frac{1}{2\pi }\Big) \int_{\R}
e^{i\xi \cdot (x-y)}
\phi(\xi)
\sum_{j,l}e^{\lambda_j(\xi)t} \beta_{j,1}
\tilde \beta_{j,l}\tilde v_l(\xi, y)^* d\xi
\ea
so that
\ba\label{Gdiff}
G^I&(x,t;y) -\bar U'(x)\tilde e(x,t;y)=\\
&\Big(\frac{1}{2\pi }\Big) \int_{\R}
e^{i\xi \cdot (x-y)} \phi(\xi)
\sum_{j, k\ne 1, l} e^{\lambda_j(\xi)t}
\beta_{j,k} \tilde \beta_{j,l}v_k(\xi,x)\tilde v_l(\xi, y)^* d\xi\\
&\quad +
\Big(\frac{1}{2\pi }\Big) \int_{\R}
e^{i\xi \cdot (x-y)} \phi(\xi)
\sum_{j, l} e^{\lambda_j(\xi)t}
\beta_{j,1} \tilde \beta_{j,l}
\Big(v_1(\xi,x)-\bar U'(x)\Big)
\tilde v_l(\xi, y)^* d\xi,\\
\ea
where, by analyticity of $v_1$,
$v_1(\xi,x)-\bar U'(x)=O(|\xi|)$, and so, by Lemma \ref{blochfacts},
\be\label{crucial}
\beta_{j,1} \tilde \beta_{j,l}
\Big(v_1(\xi,x)-\bar U'(x)\Big)
\tilde v_l(\xi, y)^* =O(1)
\ee
and
\be\label{crucial2}
\beta_{j,2} \tilde \beta_{j,l}v_2(\xi,x)\tilde v_l(\xi, y)^*
=O(1).
\ee
Note further that $\tilde v_l\equiv (1,0)^T$ unless $l=1$,
in which case $\tilde \beta_{jl}=O(|\xi|)$ by Lemma \ref{blochfacts};
hence
\be\label{crucial3}
\partial_{y}\Big(\beta_{j,1} \tilde \beta_{j,l}
\Big(v_1(\xi,x)-\bar U'(x))
\tilde v_l(\xi, y)^* \Big) =O(|\xi|),
\ee
\be\label{newcrucial3}
\Big(\beta_{j,1} \tilde \beta_{j,l}
\Big(v_1(\xi,x)-\bar U'(x))
\tilde v_l(\xi, y)^* \Big)(0,1)^T =O(|\xi|),
\ee
and
\be\label{crucial4}
\partial_{y}
\Big(\beta_{j,2} \tilde \beta_{j,l}v_2(\xi,x)\tilde v_l(\xi, y)^* \Big)
=O(|\xi|) ,
\ee
\be\label{newcrucial4}
\Big(\beta_{j,2} \tilde \beta_{j,l}v_2(\xi,x)\tilde v_l(\xi, y)^* \Big)
(0,1)^T
=O(|\xi|) 
\ee

From representation (\ref{Gdiff}), bounds \eqref{crucial}--\eqref{crucial2},
and $\Re \lambda_j(\xi)\le -\theta |\xi|^2$,
we obtain by the triangle inequality
\be
\|\tilde G^1(\cdot,t;\cdot)\|_{L^\infty(x,y)}=\|G^I-\bar U' \tilde e\|_{L^\infty(x,y)}\le C\|e^{-\theta |\xi|^2 t} \phi(\xi)\|_{L^1(\xi)}
 \le  C (1+t)^{-\frac{1}{2}}.
\ee
Derivative bounds follow similarly,
since $x$-derivatives falling on $v_{jk}$ are harmless, whereas,
by \eqref{crucial3}--\eqref{crucial4},
$y$- or $t$-derivatives falling on $\tilde v_{jl}$
or on $e^{i\xi\cdot(x-y)}$ bring down a factor
of $|\xi|$ improving the decay rate by factor $(1+t)^{-1/2}$.
(Note that $|\xi|$ is bounded because of the cutoff function $\phi$,
so there is no singularity at $t=0$.)

To obtain the corresponding bounds for $p=2$, we note that (\ref{Gnew})
may be viewed itself as a Bloch decomposition with respect to variable
$z:=x-y$, with $y$ appearing as a parameter.
Recalling (\ref{iso}), we may thus estimate
\ba
\sup_y &\|G^I(\cdot,t;y)-\bar U'\tilde e(\cdot, t;y)\|_{L^2(x)}
\le\\
&
C \sum_{j, k\ne 1, l}
\sup_y \|\phi(\xi) e^{\lambda_j(\xi)t}
v_k(\cdot, z_1)\tilde v_l^*(\cdot, y)
\tilde v_l(\cdot, y)^* \|_{L^2(\xi; L^2(z_1\in [0,X]))}\\
&\quad +
C\sum_{j, l} \sup_y \left\|\phi(\xi) e^{\lambda_j(\xi)t}
\Big( \frac{v_n(\cdot,x)-\bar U'(x)}{|\cdot|}\Big)
\tilde v_l(\cdot, y)^* \right\|_{L^2(\xi; L^2(z_1\in [0,X]))}\\
&\le
C \sum_{j, k\ne 1, l} \sup_y \|\phi(\xi) e^{-\theta |\xi|^2t} \|_{L^2(\xi)}
\sup_\xi\| v_k(\cdot, z_1) \|_{L^2(0,X)}
\| \tilde v_l(\cdot, y)^* \|_{L^\infty(0,X)}
\\
&\quad +
C\sum_{j, l} \sup_y \|\phi(\xi) e^{-\theta |\xi|^2t} \|_{L^2(\xi)}
\sup_\xi\left\| \Big( \frac{v_n(\xi,x)-\bar U'(x)}{|\xi|}\Big) \right\|_{L^2(0,X)}
\|\tilde v_l(\cdot, y)^* \|_{L^\infty(0,X)} \\
&\le
 C (1+t)^{-\frac{1}{4}},
\ea
where we have used in a crucial way the boundedness of $\tilde v_l$
in $L^\infty$,\footnote{This is clear for $\xi=0$, since $v_j$
are linear combinations of genuine and generalized eigenfunctions,
which are solutions of the homogeneous or inhomogeneous eigenvalue ODE.
More generally, note that the resolvent of $L_\xi-\gamma$
gains one derivative, hence the total eigenprojection, as a contour
integral of the resolvent, does too- now, use the one-dimensional
Sobolev inequality for periodic boundary conditions
to bound the $L^\infty$ difference from the
mean by the (bounded) $H^1$ norm, then bound the mean by the $L^1$ norm,
which is controlled by the $L^2$ norm.}
and also the boundedness of
$$
\Big( \frac{v_n(\xi,x)-\bar U'(x)}{\xi}\Big)
\sim
\partial_{\xi}v_n(r)
$$
in $L^2$, where $r\in (0,\xi)$.
Derivative bounds follow similarly as above, noting that
$y$- or $t$-derivatives bring down a factor $\xi$, while
$x$-derivatives are harmless, to obtain an additional factor
of $(1+t)^{-1/2}$ decay.
Finally, bounds for $2\le p\le \infty$ follow by $L^p$-interpolation.

Now, defining
\be\label{edef}
e(x,t;y):= \chi(t)\tilde e(x,t;y),
\ee
where $\tilde e$ is defined in \eqref{enew} and $\chi$ is a smooth cutoff function
such that $\chi(t)\equiv 1$ for $t\ge 2$ and $\chi(t)\equiv 0$ for $t\le 1$,
and setting $\tilde G:=G-\bar U'(x)e(x,t;y)$,
we readily obtain the estimates \eqref{GIest} by combining
the above estimates on $G^I-\bar U \tilde e$
with bound \eqref{SIIest} on $G^{II}$.

Finally, recalling, by Lemma \ref{blochfacts}, that $\tilde v_l\equiv \const$
for $l\ne 1$ while $\tilde \beta_{j,1}=O(|\xi|)$, we have
$$
\partial_{y} \Big( \beta_{j,1} \tilde \beta_{j,l}\tilde v_l(\xi, y)^*\Big)
=o(|\xi|).
$$
Bounds \eqref{ederest} thus
follow from \eqref{enew} by the argument
used to prove \eqref{GIest}, together with the observation that
$x$- or $t$-derivatives bring down factors of $\xi$.
Bounds \eqref{eest} follow similarly for $j+l\ge 1$,
in which case the integrand on the righthand side of \eqref{enew}
(now differentiated in $x$ and or $t$) is Lebesgue integrable.

In the critical case $j=l=0$,
taking $t$ without loss of generality $\ge 1$,
expanding
$$
\lambda_j(\xi)=-i\xi a_j - b_j \xi^2 +O(\xi^3),
$$
and setting $\check \lambda(\xi):= -i\xi a_j -b_j\xi^2$,
we may write $\tilde e(x,t;y)$ in \eqref{enew} as
\be\label{princterm}
\begin{aligned}
\Big(\frac{1}{2\pi }\Big)&\int_{\R}\sum_{j}
\check \beta_{j,1}(0) \tilde \beta_{j,2}(0)\tilde v_2(0, y)^*e^{i\xi \cdot (x-y)}
\xi^{-1}e^{\check\lambda_j(\xi)t}
d\xi\\
&=\Big(\frac{1}{2\pi }\Big)\pv\int_{\R}\sum_{j}
\check \beta_{j,1}(0) \tilde \beta_{j,2}(0)\tilde v_2(0, y)^*e^{i\xi \cdot (x-y)}
\xi^{-1}e^{\check\lambda_j(\xi)t}
d\xi\\
&=\sum_{j}
\check \beta_{j,1}(0) \tilde \beta_{j,2}(0)\tilde v_2(0, y)^*
\Big(\frac{1}{2\pi }\Big) \pv\int_{\R}
e^{i\xi \cdot (x-y)}
\xi^{-1}e^{\check\lambda_j(\xi)t}
d\xi,
\end{aligned}
\ee
where $\check \beta_{j,1}(0):=\lim_{\xi\to 0}(\xi \beta_{j,1}(\xi))$,
and the above series is convergent by the alternating series test,
plus a negligible error term
$$
\Big(\frac{1}{2\pi }\Big) \pv\int_{\R}
e^{i\xi \cdot (x-y)}
\phi(\xi)
O(e^{-\theta|\xi|^2t}) d\xi
$$
for which the integrand is Lebesgue integrable,
hence, by the previous argument, obeys the bounds for $j+l=1$.
(Note
that the integral on the lefthand side of \eqref{princterm}
is absolutely convergent by
$
\xi^{-1}(e^{-ia_1\xi t}-e^{-ia_2\xi t})\sim |a_1-a_2|t,
$
becoming conditionally convergent only when the integrand is split into
different eigenmodes.)

By (D2), we have $a_j$ real and $\Re b_j>0$.
Moreover, the operator
$L$, since real-valued, has spectrum with complex conjugate symmetry,
hence $b_j$ is real as well.
Observing that
$\Big(\frac{1}{2\pi }\Big) \pv\int_{\R}
e^{i\xi \cdot (x-y)}
\xi^{-1}e^{\check\lambda_j(\xi)t} d\xi$
is an antiderivative in $x$ of the inverse Fourier transform
$\Big(\frac{1}{2\pi }\Big)\int_{\R}
e^{i\xi \cdot (x-y)}
e^{\check\lambda_j(\xi)t} d\xi
=\frac{e^{-(x-y-a_jt)^2/4b_jt}}{\sqrt{4\pi b_jt}}$,
a Gaussian, we find that the principal part \eqref{princterm}
is a sum of errorfunctions
\be\label{eq:errfns}
\sum_{j=1}^2 c_j \, \errfn \Big( \frac{x-y-a_jt}{\sqrt{4b_jt}}, t \Big)
{\tilde v}_2(0,y),
\ee
hence bounded in $L^\infty$ as claimed, where $a_j$ denote the
characteristic speeds of the Whitham averaged system and
(on further inspection) $\sum_j c_j=0$.
This verifies bound \eqref{eest} in the final
case $j=l=0$, completing the proof.
\end{proof}

\br\label{ozrmk}
\textup{
See the proof of Proposition 1.5, \cite{OZ2}, for an essentially
equivalent estimate from the inverse Laplace transform
point of view
of the critical $\xi^{-1}$ contribution \eqref{princterm}.
}
\er

\subsection{Final linearized bounds}\label{s:finallin}

\begin{cor}\label{greenbds}
Under assumptions (H1)--(H4), (D1)--(\DDD),
the Green function $G(x,t;y)$ of \eqref{e:lin} decomposes as
$G=E+\tilde G$,
\be\label{E}
E=\bar U'(x)e(x,t;y),
\ee
where, for some $C>0$, all $t>0$, $1\le q\le 2\le p\le \infty$, $0\le j,k, l$,
$j+l\le K+1$, $1\le r\le 2$,
\ba\label{sheatbds}
\left\|\int_{-\infty}^{+\infty} \tilde G(x,t;y)f(y)dy\right\|_{L^p(x)}&\le
C (1+t)^{-\frac{1}{2}(1/q-1/p)}
\|f\|_{L^q\cap H^1},\\
\left\|\int_{-\infty}^{+\infty} \partial_y^r \tilde G(x,t;y)f(y)dy\right\|_{L^p(x)}&\le
C (1+t)^{-\frac{1}{2}(1/q-1/p)-\frac{1}{2}}
\|f\|_{L^q\cap H^{r+1}},\\
\left\|\int_{-\infty}^{+\infty} \partial_t^r \tilde G(x,t;y)f(y)dy\right\|_{L^p(x)}&\le
C (1+t)^{-\frac{1}{2}(1/q-1/p)-\frac{1}{2}} \|f\|_{L^q\cap H^{2r+1}},\\
\left\|\int_{-\infty}^{+\infty}  \tilde G(x,t;y)(0,1)^T f(y)dy\right\|_{L^p(x)}&\le
C (1+t)^{-\frac{1}{2}(1/q-1/p)-\frac{1}{2}} \|f\|_{L^q\cap H^1}.\\
\ea
\ba\label{etbds}
\left\|\int_{-\infty}^{+\infty} \partial_x^j\partial_t^k e(x,t;y)f(y)dy\right\|_{L^p}
&\le
(1+t)^{-\frac{1}{2}(1/q-1/p) -\frac{(j+k)}{2} +\frac{1}{2} }\|f\|_{L^q},\\
\left\|\int_{-\infty}^{+\infty} \partial_x^j\partial_t^k\partial_y^r e(x,t;y)f(y)dy\right\|_{L^p}
&\le
(1+t)^{-\frac{1}{2}(1/q-1/p) -\frac{(j+k)}{2} }\|f\|_{L^q},\\
\left\|\int_{-\infty}^{+\infty} \partial_x^j\partial_t^k e(x,t;y)
(0,1)^Tf(y)dy\right\|_{L^p}
&\le
(1+t)^{-\frac{1}{2}(1/q-1/p) -\frac{(j+k)}{2} }\|f\|_{L^q}.\\
\ea
Moreover, $e(x,t;y)\equiv 0$ for $t\le 1$.
\end{cor}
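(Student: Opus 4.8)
The plan is to build the full Green function as $G = G^{I} + G^{II}$, where $G^{I}(x,t;y) := S^{I}(t)\delta_y(x)$ and $G^{II}(x,t;y) := S^{II}(t)\delta_y(x)$, to keep $E = \bar U'(x)\,e(x,t;y)$ as in \eqref{E} and Proposition \ref{Gbds}, and to set $\tilde G := \tilde G^{I} + G^{II}$ with $\tilde G^{I}$ the low-frequency residual of Proposition \ref{Gbds}. The assertion $e\equiv 0$ for $t\le 1$ is already part of Proposition \ref{Gbds}. With this decomposition the proof reduces to two tasks: (i) upgrading the kernel-type estimates of Proposition \ref{Gbds} on $\tilde G^{I}$ and on $e$ to the convolution (operator-norm) estimates \eqref{sheatbds}--\eqref{etbds}; and (ii) absorbing the high-frequency piece $G^{II}$, which by Proposition \ref{p:hf} decays exponentially in $t$ and hence contributes nothing to the decay rates, costing only the loss of derivatives recorded in the Sobolev norms of $f$.

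First I would dispose of the high-frequency part. Writing $\int G^{II}(\cdot,t;y)f(y)\,dy = S^{II}(t)f$, $\int \partial_y^r G^{II}(\cdot,t;y)f(y)\,dy = (-1)^r S^{II}(t)\big(\partial_x^r f\big)$ (moving the $y$-derivatives onto $f$ by parts, legitimate since $G^{II}$ is a Bloch kernel), $\int \partial_t^r G^{II}(\cdot,t;y)f(y)\,dy = \partial_t^r\big(S^{II}(t)f\big)$ (which via $\partial_t e^{L_\xi t} = L_\xi e^{L_\xi t}$ and the fact that $L$ is second order in $x$ spends $2r$ $x$-derivatives on $f$), and $\int G^{II}(\cdot,t;y)(0,1)^T f(y)\,dy = S^{II}(t)\big((0,1)^T f\big)$, the estimate \eqref{SIIest} of Proposition \ref{p:hf} (in the form for higher derivatives, cf. the proof there) yields in each case a bound $C e^{-\theta t}\|f\|_{H^{s}}$ with $s = 1$, $r+1$, $2r+1$, and $1$ respectively; since $e^{-\theta t}\le C_N(1+t)^{-N}$ for all $N$, these are dominated by the algebraic rates of \eqref{sheatbds}, and $G^{II}$ does not enter \eqref{etbds} at all.

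For the low-frequency part I would establish the two endpoints $q = 1$ and $q = 2$ and interpolate. At $q = 1$, Minkowski's integral inequality gives $\big\|\int K(\cdot,t;y)f(y)\,dy\big\|_{L^p(x)} \le \big(\sup_y\|K(\cdot,t;y)\|_{L^p(x)}\big)\|f\|_{L^1}$ for each kernel $K$ occurring in \eqref{sheatbds}--\eqref{etbds}, and the required $L^p(x)$ kernel bounds are precisely \eqref{GIest}, \eqref{ederest}, \eqref{eest}. At $q = 2$ one cannot use Schur's test; instead, by \eqref{Gdiff}, \eqref{enew} these kernels are themselves Bloch kernels in $z := x - y$, so that the convolution operator $f\mapsto\int K(\cdot,t;y)f(y)\,dy$ is conjugated by the Bloch transform to multiplication by an operator-valued symbol whose $L^\infty\big(\xi;\mathcal B(L^2([0,X]))\big)$ norm is controlled, using \eqref{crucial}--\eqref{newcrucial4} and $\Re\lambda_j(\xi)\le -\theta|\xi|^2$, by a multiple of $\big\|\,|\xi|^{m}\phi(\xi)e^{-\theta|\xi|^2 t}\big\|_{L^\infty(\xi)}\le C(1+t)^{-m/2}$, where $m$ counts the factors $|\xi|$ gained from $y$- or $t$-derivatives or from contraction with $(0,1)^T$ (so $m\ge 1$ for all the differentiated and contracted kernels, and $m=\max(j+k-1,0)$ for the $e$-family). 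By Parseval's identity \eqref{iso} this gives the $L^2(x)\to L^2(x)$ bounds with exactly the stated decay; interpolating with the $L^2(x)\to L^\infty(x)$ bound obtained from Cauchy--Schwarz and the $x\leftrightarrow y$-symmetric kernel estimate $\sup_x\|K(x,t;\cdot)\|_{L^2(y)}\le C(1+t)^{-1/4-\dots}$ (valid because the estimates of Proposition \ref{Gbds}, being proved by Bloch decomposition in $z$, are symmetric under interchange of $x$ and $y$) yields $L^2(x)\to L^p(x)$ for $2\le p\le\infty$. A Riesz--Thorin interpolation in $q$ then produces \eqref{sheatbds}--\eqref{etbds} for all $1\le q\le 2\le p\le\infty$; the extra $(1+t)^{-1/2}$ on the $\partial_y^r$, $\partial_t^r$, and $(0,1)^T$ lines is inherited directly from Proposition \ref{Gbds}, the last of these being the nonconservative gain emphasized in Remark \ref{newrmk}.

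The step I expect to be the real obstacle is the borderline case $j = k = 0$ of \eqref{etbds}(i) for the phase kernel $e$: there the symbol carries a genuine $\xi^{-1}$ singularity at $\xi = 0$, so neither Parseval nor the $L^p(x)$ kernel bound (which \eqref{eest} supplies only for $p=\infty$ in this case) applies, and to leading order $e$ is the \emph{non-decaying} sum of errorfunctions \eqref{eq:errfns}. I would treat this exactly as in the inverse-Laplace-transform analysis of \cite{OZ2} cited in Remark \ref{ozrmk}: following \eqref{princterm}, split $\tilde e$ into its explicit principal part \eqref{eq:errfns} --- estimated by hand from the structure of the shifted, rescaled Gaussian antiderivatives together with the cancellation $\sum_j c_j = 0$ --- plus a remainder whose integrand is Lebesgue integrable and which therefore obeys the $j+k\ge 1$ bounds already established. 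Everything remaining is bookkeeping of the precise exponents across the various sub-cases, which is routine given Propositions \ref{Gbds} and \ref{p:hf}.
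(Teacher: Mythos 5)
Your proposal takes essentially the same route as the paper's proof: decompose $G=\bar U'(x)e+\tilde G$ with $\tilde G=\tilde G^I+G^{II}$, absorb the high-frequency piece through the exponential bound of Proposition \ref{p:hf}, obtain $q=1$ by Minkowski's integral inequality against the kernel bounds \eqref{GIest}, \eqref{ederest}, \eqref{eest}, obtain $q=2$ by Bloch--Parseval at $p=2$ together with an $L^2\to L^\infty$ endpoint and $L^p$-interpolation, Riesz--Thorin-interpolate in $q$, and, for the borderline $j=k=0$ contribution of $e$, invoke (as the paper does) the principal-part/errorfunction decomposition of \eqref{princterm}--\eqref{eq:errfns}. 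The one place where your phrasing departs from the paper is the $q=2$, $p=\infty$ endpoint: you appeal to Cauchy--Schwarz in $y$ together with an ``$x\leftrightarrow y$-symmetric'' kernel estimate $\sup_x\|K(x,t;\cdot)\|_{L^2(y)}$, whereas the paper bounds $\|\int K\cdot f\|_{L^\infty(x)}$ directly inside the Bloch integral using the uniform $L^\infty([0,X])$ bound on eigenfunctions and then Cauchy--Schwarz in $\xi$ (see \eqref{last}). Your appeal to symmetry is a little imprecise --- the kernels are of the form $\sum_{j,k,l}\cdots v_k(\xi,x)\tilde v_l(\xi,y)^*$ and are not literally symmetric in $x$ and $y$ --- but the estimate you want does hold, since $v_k$ and $\tilde v_l$ are each uniformly bounded in $L^\infty([0,X])$ (the footnote in the proof of Proposition \ref{Gbds}), so the same Bloch--Parseval computation bounds $\sup_x\|K(x,t;\cdot)\|_{L^2(y)}$ with the same $(1+t)^{-1/4}$ rate; the two routes yield identical bounds, and the remainder of your argument is exactly the paper's.
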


\begin{proof}
({\it Case $q=1$}).
From (\ref{GIest}) and the triangle inequality we obtain
$$
\Big\|\int_{\R}\tilde G^I(x,t;y)f(y)dy\Big\|_{L^p(x)}
\le
\int_{\R}\sup_y \|\tilde G^I(\cdot ,t;y)\|_{L^p}|f(y)|dy
\le C(1+t)^{-\frac{1}{2}(1-1/p)}\|f\|_{L^1}
$$
and similarly for $y$- and $t$-derivative estimates, and products
with $(0,1)^T$, which,
together with \eqref{SIIest}, yield \eqref{sheatbds}.
Bounds \eqref{etbds} follow similarly by the triangle inequality
and \eqref{ederest}--\eqref{eest}.

({\it Case $q=2$}).
From \eqref{crucial}--\eqref{crucial2}, and analyticity of
$v_j$, $\tilde v_j$, we have boundedness
from $L^2([0,X])\to L^2([0,X])$
of the projection-type operators
\be\label{crucialx}
f\to
\beta_{j,n} \tilde \beta_{j,l}
\Big(v_n(\xi,x)-\bar U'(x)\Big)
\langle \tilde v_l, f\rangle
\ee
and
\be\label{crucial2x}
f\to
\beta_{j,k} \tilde \beta_{j,l}v_k(\xi,x)
\langle \tilde v_l,f\rangle
 \; \hbox{\rm for }\; k\ne 1,
\ee
uniformly with respect to $\xi$,
from which we obtain by \eqref{Gdiff}, \eqref{edef}, and (\ref{iso})
the bound
\be\label{triv}
\left\|\int_{-\infty}^{+\infty} \tilde G^I(x,t;y)f(y)dy\right\|_{L^2(x)}\le
C\|f\|_{L^2(x)},
\ee
for all $t\ge 0$, yielding together with \eqref{SIIest}
the result \eqref{sheatbds} for $p=2$, $r=1$.
Similarly, by boundedness of $\tilde v_j$, $v_j$, $\bar U'$
in all $L^p[0,X]$, we have
$$
\begin{aligned}
\left\|e^{\lambda_j(\xi)t}
\beta_{j,n} \tilde \beta_{j,l}
\Big(v_n(\xi,x)-\bar U'(x)\Big)
\langle \tilde v_l, \hat f\rangle\right\|_{L^\infty(x)}
& \le Ce^{-\theta |\xi|^2t} \|\hat f(\xi,\cdot)\|_{L^2(x)},\\
\left\|e^{\lambda_j(\xi)t}
\beta_{j,k} \tilde \beta_{j,l}v_k(\xi,x)
\langle \tilde v_l, \hat f\rangle \right\|_{L^\infty(x)}
& \le Ce^{-\theta |\xi|^2t} \|\hat f(\xi,\cdot)\|_{L^2(x)},
 \; \hbox{\rm for }\; k\ne 1,
\end{aligned}
$$
$C,\, \theta>0$, yielding by definitions \eqref{Gdiff}, \eqref{edef} the bound
\ba\label{last}
\left\|\int_{-\infty}^{+\infty} \tilde G^I(x,t;y)f(y)dy\right\|_{L^\infty(x)}
&\le
\Big(\frac{1}{2\pi }\Big) \int_{-\pi}^{\pi}
C\phi(\xi) e^{-\theta |\xi|^2t}\|\hat f(\xi,\cdot)\|_{L^2(x)}
d\xi\\
&\le C\left\|\phi(\xi) e^{-\theta |\xi|^2t}\right\|_{L^2(\xi)} \|\hat f\|_{L^2(\xi, x)}\\
&\le C(1+t)^{-\frac{d}{4} } \|f\|_{L^2([0,X])},
\ea
hence giving the result for $p=\infty$, $r=0$.
The result for $r=0$ and general $2\le p\le \infty$ then follows by
$L^p$ interpolation between $p=2$ and $p=\infty$.
Derivative bounds $1\le r\le 2$ follow by
similar arguments, using \eqref{crucial3}--\eqref{crucial4},
as do bounds for products with $(0,1)^T$.
Bounds \eqref{etbds} follow similarly.

({\it Case $1\le q \le 2$}).
By Riesz--Thorin interpolation between the cases $q=1$ and $q=2$,
we obtain the bounds asserted in the general case $1\le q\le 2$,
$2\le p\le \infty$.
\end{proof}

Note the close analogy between the bounds of Corollary \ref{greenbds}
and those obtained in \cite{MaZ3,MaZ1} for the viscous or relaxation
shock wave case.

\section{Nonlinear stability}\label{s:nonlin}
With the bounds of Corollary \eqref{greenbds}, nonlinear
stability follows by a combination of the argument of \cite{JZ3,JZ4}
and modifications introduced in the shock wave case
to treat partial parabolicity and potential loss of derivatives
in the nonlinear iteration scheme \cite{Z1,Z3}.

\subsection{Nonlinear perturbation equations}\label{s:pert}

Given a solution $\tilde U(x,t)$ of \eqref{eqn:1conslaw},
define the nonlinear perturbation variable
\be\label{pertvar}
v=U-\bar U=
\tilde {U}(x+\psi(x,t),t)-\bar U(x),
\ee
where
\be\label{uvar}
U(x,t):=\tilde {U}(x+\psi(x,t),t)
\ee
and $\psi:\RM\times\RM\to\RM$ is to be chosen later.

\begin{lem}\label{4.1}
For $v$, $U$ as in \eqref{pertvar}, \eqref{uvar}, and $|\tilde U|$ bounded,
\begin{equation}\label{eqn:1nlper}
U_t+f(U)_{x}-(B(U)U_{x})_x-g(U)=\left(\partial_t-L\right)\bar{U}'(x)\psi(x,t)
+P
+\partial_x R +
\partial_t S ,
\ee
where
\begin{equation}\label{eqn:P}
P=\left(g(\tilde U)-g(\bar{U})\right)\psi_x=(0,1)^T\mathcal{O}(|v||\psi_x|),
\end{equation}
\begin{equation}\label{eqn:R}
R:= v\psi_t + B(\tilde{U})(\bar U_x +v_x)\frac{\psi_x^2}{1+\psi_x}-\left(B(\tilde U)-B(\bar U)\right)\bar U_x\psi_x-B(\tilde{U})v_x\psi_x,
\end{equation}
and
\begin{equation}\label{eqn:S}
S
:=- v\psi_x.
\end{equation}
\end{lem}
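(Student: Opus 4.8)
The plan is to derive identity \eqref{eqn:1nlper} by a direct, if careful, change-of-variables computation, tracking how each term in the abstract equation \eqref{ab} transforms under the substitution $U(x,t)=\tilde U(x+\psi(x,t),t)$, and then organizing the resulting error terms into the divergence-form pieces $\partial_x R$, $\partial_t S$, the linear source $(\partial_t-L)\bar U'\psi$, and the genuinely undifferentiated piece $P$. First I would record the chain-rule relations: writing $z=x+\psi(x,t)$, one has $U_t(x,t)=\tilde U_t(z,t)+\tilde U_z(z,t)\psi_t$ and $U_x(x,t)=\tilde U_z(z,t)(1+\psi_x)$, and similarly for second derivatives, so $U_{xx}=\tilde U_{zz}(1+\psi_x)^2+\tilde U_z\psi_{xx}$. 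Since $\tilde U$ solves \eqref{eqn:1conslaw}, i.e. $\tilde U_t+f(\tilde U)_z=(B(\tilde U)\tilde U_z)_z+g(\tilde U)$ in the $z$ variable, I substitute this in to replace $\tilde U_t$ and collect everything in terms of $x$-derivatives.

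The key bookkeeping step is then to compare the transported equation against the equation that $U$ would satisfy if it were an exact solution, and to see that the discrepancy is exactly $(\partial_t-L)\bar U'\psi$ up to the quadratic remainders. Here I would lean on the same algebraic identity used in \cite{JZ3,JZ4} and in the shock-wave papers \cite{Z1,Z3}: the leading-order effect of the phase shift is a translation, $\tilde U(x+\psi,t)-\bar U(x)\approx \bar U'(x)\psi$, and differentiating the profile relation (so that $L\bar U'=0$ from the traveling-wave ODE, as recorded in Lemma \ref{blochfacts}) produces the linear operator $L$ acting on $\bar U'\psi$, with $\partial_t\bar U'\psi$ coming from the time dependence of $\psi$. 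The terms involving $\psi_x$, $\psi_t$, $\psi_{xx}$ multiplied by $\tilde U_z$, $\tilde U_{zz}$, or differences $B(\tilde U)-B(\bar U)$, $f(\tilde U)-f(\bar U)$, $g(\tilde U)-g(\bar U)$ are then sorted: those multiplying derivatives of $\tilde U$ get rewritten using $\tilde U_z=(1+\psi_x)^{-1}U_x=(1+\psi_x)^{-1}(\bar U_x+v_x)$ and absorbed into $R$ after pulling out an $\partial_x$, the pure $v\psi_t$ and $v\psi_x$ terms go into $R$ and $S$ respectively, and the convective/friction source difference $(g(\tilde U)-g(\bar U))\psi_x$ is the one term that refuses to be written in divergence form — this is $P$. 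The crucial structural observation, already flagged in Remark \ref{newrmk}, is that $g$ has nonzero component only in the second (nonconservative) slot, so $P=(0,1)^T\mathcal O(|v||\psi_x|)$, which is precisely the form for which the improved decay bound \eqref{GIest}(iii) / \eqref{sheatbds}(iv) applies.

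For the specific forms: the $R$ in \eqref{eqn:R} collects (a) $v\psi_t$ from the $\tilde U_z\psi_t$ term after writing $\tilde U_z\psi_t$-contributions in terms of $v$ and a divergence, (b) the viscous term $B(\tilde U)(\bar U_x+v_x)\frac{\psi_x^2}{1+\psi_x}$ coming from $B(\tilde U)\tilde U_z\cdot$(the $(1+\psi_x)$ factors minus $1$), using $(1+\psi_x)-\frac{1}{1+\psi_x}=\frac{\psi_x^2+2\psi_x}{1+\psi_x}$-type identities together with the $\psi_{xx}$ piece, (c) the mismatch $-(B(\tilde U)-B(\bar U))\bar U_x\psi_x$ and (d) $-B(\tilde U)v_x\psi_x$ from replacing $B(\tilde U)U_x$ by $B(\bar U)\bar U_x$ inside the $x$-derivative; and $S=-v\psi_x$ comes from writing the transport contribution $\tilde U_z\psi_x$-type term as $\partial_t$ of something plus lower order, using $v=U-\bar U$ and that $\bar U$ is $t$-independent. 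The main obstacle I anticipate is purely the algebra: keeping the $(1+\psi_x)^{\pm1}$ and $\psi_{xx}$ factors straight while insisting that every term land in exactly one of the four buckets, and verifying that no stray term of size $\mathcal O(|v|^2)$ or $\mathcal O(|\partial v|\,|\psi_x|)$ is left outside divergence form (since such a term would break the nonlinear iteration). This is a finite computation with no conceptual difficulty, following the template of \cite[\S 4]{JZ3} adapted to the quasilinear, partially parabolic, nonconservative setting; the boundedness hypothesis on $|\tilde U|$ is used only to ensure $f,B,g$ and their derivatives are uniformly controlled so the $\mathcal O(\cdot)$ terms are legitimate.
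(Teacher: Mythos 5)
Your approach matches the paper's: expand $U(x,t)=\tilde U(x+\psi(x,t),t)$ by the chain rule, substitute the fact that $\tilde U$ solves the PDE to replace $\tilde U_t$, compute $(\partial_t-L)\bar U'\psi$ separately from $L\bar U'=0$, subtract, and sort the remainder. That is exactly what the paper does. However, two details in your outline are imprecise in a way that would trip you up if you carried out the algebra. First, the computation is \emph{exact}, with nothing of lower order discarded, so your description of $S=-v\psi_x$ as arising from "$\partial_t$ of something plus lower order" is the wrong picture. The actual mechanism is a pair of exact cancellations you have elided: after substituting $\tilde U_x-\bar U_x = v_x - (\bar U_x+v_x)\tfrac{\psi_x}{1+\psi_x}$ and $\tilde U_t-\bar U_t = v_t - (\bar U_x+v_x)\tfrac{\psi_t}{1+\psi_x}$ into $(\tilde U_x-\bar U_x)\psi_t-(\tilde U_t-\bar U_t)\psi_x$, the two rational terms cancel identically, leaving precisely $v_x\psi_t-v_t\psi_x$, which then equals $(v\psi_t)_x-(v\psi_x)_t$ because the mixed-partial terms $v\psi_{xt}$ cancel; this is what produces both the $v\psi_t$ entry of $R$ and all of $S$. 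Second, your algebraic identity for the viscous term, $(1+\psi_x)-\tfrac{1}{1+\psi_x}=\tfrac{\psi_x^2+2\psi_x}{1+\psi_x}$, is not the one needed: after writing $\tilde U_x=(\bar U_x+v_x)/(1+\psi_x)$ inside $B(\tilde U)\tilde U_x\psi_x$, the relevant split is simply $\tfrac{\psi_x}{1+\psi_x}=\psi_x-\tfrac{\psi_x^2}{1+\psi_x}$, which, together with adding and subtracting $B(\bar U)\bar U_x\psi_x$, produces all three $B$-terms of $R$. These are small slips, but they sit at exactly the points where the identity must close term by term, so you should fix them before trusting the resulting forms of $P$, $R$, $S$.
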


\begin{proof}
By the definition of $U$ in \eqref{uvar} we have by a
straightforward computation
\begin{align*}
U_t(x,t)&=\tilde{U}_x(x+\psi(x,t),t)\psi_t(x,t)+\tilde{U}_t(x+\psi,t)\\
f(U(x,t))_x&=df(\tilde{U}(x+\psi(x,t),t))\tilde{U}_x(x+\psi,t)\cdot(1+\psi_x(x,t))\\
U_{x}(x,t)&=\tilde{U}_x(x+\psi(x,t),t)\cdot(1+\psi_x(x,t)).
\end{align*}

By $\tilde U_t + df(\tilde{U})\tilde{U}_x-
(B(\tilde U) \tilde{U}_{x})_x-g(\tilde U)=0$,
it follows that
\ba\label{altform}
U_t+f(U)_{x}-(B(U)U_{x})_x-g(U)
&=\tilde{U}_x\psi_t+df(\tilde{U})\tilde{U}_x\psi_x
-(B(\tilde U)\tilde U_x)_x \psi_x
-(B(\tilde U)\tilde{U}_x\psi_x)_x\\
&= \tilde U_x \psi_t -\tilde U_{t} \psi_x
+g(\tilde U)\psi_x
-(B(\tilde U)\tilde{U}_x\psi_x)_x,
\ea
where it is understood that derivatives of $\tilde U$ appearing
on the righthand side
are evaluated at $(x+\psi(x,t),t)$.
Moreover, by another direct calculation,
using $L(\bar{U}'(x))=0$, we have
$$
\begin{aligned}
\left(\partial_t-L\right)\bar{U}'(x)\psi&=
\bar{U}_x\psi_t -\bar{U}_t\psi_{x}
+df(\bar {U})\bar {U}_x\psi_x -(B(\bar  U)\bar  U_x)_x \psi_x
-(B(\bar U)\bar{U}_x\psi_x)_x\\
&= \bar U_x \psi_t -\bar U_{t} \psi_x
+g(\bar U)\psi_x
-(B(\bar U)\bar{U}_x\psi_x)_x.
\end{aligned}
$$
Subtracting, and using the facts that,
by differentiation of $(\bar U+ v)(x,t)= \tilde U(x+\psi(x,t),t)$,
\ba\label{keyderivs}
\bar U_x + v_x&= \tilde U_x(1+\psi_x),
\qquad
\bar U_t + v_t= \tilde U_t + \tilde U_x\psi_t,\\
\ea
so that
\ba\label{solvedderivs}
\tilde U_x-\bar U_x -v_x&=
-(\bar U_x+v_x) \frac{\psi_x}{1+\psi_x},
\qquad
\tilde U_t-\bar U_t -v_t=
-(\bar U_x+v_x) \frac{\psi_t}{1+\psi_x},\\
\ea
we obtain
\ba
U_t+ f(U)_{x} - (B(U)U_{x})_x-g(U)&=
(\partial_t-L)\bar{U}'(x)\psi
+v_x\psi_t - v_t \psi_x
\\ &\quad +
\big(g(\tilde U)-g(\bar U)\big) \psi_x-\left(B(\tilde{U})v_x\psi_x\right)_x
\\ &\quad +
\left(B(\tilde U)(\bar U_x +v_x)\frac{\psi_x^2}{1+\psi_x} \right)_x
\\ &\quad
-\left(\left(B(\tilde U)-B(\bar U)\right)\bar U_x\psi_x\right)_x,
\ea
yielding \eqref{eqn:1nlper} by
$v_x\psi_t - v_t \psi_x = (v\psi_t)_x-(v\psi_x)_t$.
\end{proof}

\begin{cor}\label{4.2}
The nonlinear residual $v$ defined in \eqref{pertvar} satisfies
\be\label{veq}
v_t-Lv=\left(\partial_t-L\right)\bar{U}'(x)\psi
-Q_{x}+T+ P+ R_x +
\partial_t S,
\ee
where $P$, $R$, and $S$ are as in Lemma \ref{4.1} and $Q$ and $T$ are defined by
\ba\label{eqn:Q}
Q:&=f(\tilde{U}(x+\psi(x,t),t))-f(\bar{U}(x))-df(\bar{U}(x))v\\
&\quad\quad -\left(B(\tilde{U}(x+\psi(x,t),t))\tilde{U}_x(x+\psi(x,t),t)-B(\bar{U}(x))\bar{U}_x(x)\right)\\
&\quad\quad\quad\quad-\left(B(\bar{U})v_x+\left(dB(\bar{U})\bar{U}_x\right)v\right)\\
\ea
and
\be\label{eqn:T}
T:=g(\tilde{U}(x+\psi(x,t),t))-g(\bar{U}(x))-dg(\bar{U}(x))v=
(0,1)^T\mathcal{O}(|v|^2),
\ee
\end{cor}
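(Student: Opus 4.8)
The plan is to obtain the residual equation \eqref{veq} directly from the identity of Lemma~\ref{4.1}, by subtracting the profile equation satisfied by $\bar U$ and then extracting the linearized operator $L$ through a Taylor expansion of the flux, viscosity, and source terms about $\bar U$; no new estimates are involved. Since $\bar U=\bar U(x)$ is a \emph{stationary} solution of the co-moving equation \eqref{eqn:co1conslaw}, written abstractly as \eqref{ab}, one has $\bar U_t=0$ and $f(\bar U)_x-(B(\bar U)\bar U_x)_x-g(\bar U)=0$; hence $v_t=U_t$ for $v=U-\bar U$ as in \eqref{pertvar}, and Lemma~\ref{4.1} rewrites, after inserting the vanishing quantity $f(\bar U)_x-(B(\bar U)\bar U_x)_x-g(\bar U)$, as
\ba\label{plan:step1}
v_t&=(\partial_t-L)\bar{U}'(x)\psi+P+\partial_x R+\partial_t S\\
&\quad-\partial_x\big(f(U)-f(\bar U)\big)+\partial_x\big(B(U)U_x-B(\bar U)\bar U_x\big)+\big(g(U)-g(\bar U)\big).
\ea
It therefore remains only to show that the last three groups sum to $Lv-Q_x+T$.

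To do this I would Taylor-expand each of those groups about $\bar U$ in $v$ and $v_x=U_x-\bar U_x$: $f(U)-f(\bar U)=df(\bar U)v+\big(f(U)-f(\bar U)-df(\bar U)v\big)$; $B(U)U_x-B(\bar U)\bar U_x=B(\bar U)v_x+(dB(\bar U)v)\bar U_x+(\text{quadratic remainder})$; and $g(U)-g(\bar U)=dg(\bar U)v+T$ with $T$ as in \eqref{eqn:T}. The linear contributions $-\partial_x(df(\bar U)v)+\partial_x(B(\bar U)v_x)+\partial_x\big((dB(\bar U)v)\bar U_x\big)+dg(\bar U)v$ reassemble, by the definitions \eqref{coeffs} of $A=df(\bar U)-(dB(\bar U)(\cdot))\bar U_x$, $B=B(\bar U)$, and $C=dg(\bar U)$, into exactly $Lv=(\partial_x B\partial_x-\partial_x A+C)v$. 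The quadratic remainders from $f$ and from $B(U)U_x$ are then grouped, with the appropriate sign, into $-\partial_x Q$ with $Q$ the expression \eqref{eqn:Q} --- here one uses the identities \eqref{keyderivs}--\eqref{solvedderivs} to re-express derivatives of $U$ through $\bar U_x$, $v_x$ and $\psi_x$, any $\partial_x$-exact $\psi$-dependent leftover being absorbed into the term $R$ already produced by Lemma~\ref{4.1} --- while the remaining quadratic remainder of $g$ is the \emph{undifferentiated} term $T$. Substituting back into \eqref{plan:step1} yields \eqref{veq}.

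Finally I would record the structural observations. Since $g(U)=(0,\,1-\tau u^2)^T$ has vanishing first component and depends on $U$ only through its second, both $T=g(\tilde U(x+\psi,t))-g(\bar U)-dg(\bar U)v$ and the term $P=(g(\tilde U)-g(\bar U))\psi_x$ of Lemma~\ref{4.1} have vanishing first component; and since $f$, $B$, $g$ are $C^{K+1}$ with $K\ge 3$ on the relevant range by (H1), the Taylor remainders are controlled quadratically, giving $T=(0,1)^T\mathcal{O}(|v|^2)$ as claimed in \eqref{eqn:T} and $Q=\mathcal{O}(|v|^2+|v||v_x|)$ uniformly for $|\tilde U|$ bounded. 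I expect the main obstacle to be precisely the sign and bookkeeping management in the middle paragraph --- in particular, keeping the convective--viscous cross term $(dB(\bar U)v)\bar U_x$ attached to $Lv$ rather than to $Q$, and tracking the $\psi_x$-dependent reshuffling carefully enough that $R$ (together with $P$ and $S$) remains \emph{literally} the quantity of Lemma~\ref{4.1}; everything else is a routine expansion.
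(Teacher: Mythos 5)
Your proposal is correct and matches the paper's proof, which is exactly the one-liner ``Taylor expansion comparing \eqref{eqn:1nlper} and $\bar U_t + f(\bar U)_x-(B(\bar U)\bar U_{x})_x-g(\bar U)=0$'': you subtract the stationary profile equation from the identity of Lemma~\ref{4.1}, Taylor-expand $f$, $B(U)U_x$, and $g$ about $\bar U$, and recognize the linear part as $Lv=(\partial_x B\partial_x-\partial_x A+C)v$ (using crucially that the $(dB(\bar U)v)\bar U_x$ cross term is built into $A$), leaving the quadratic remainders in $-Q_x$ and $T$. The only small cautionary remark is that the formula \eqref{eqn:Q} as printed appears to carry a sign slip in the final grouping $-(B(\bar U)v_x+(dB(\bar U)\bar U_x)v)$: the direct expansion you describe produces $Q=\bigl(f(U)-f(\bar U)-df(\bar U)v\bigr)-\bigl(B(U)U_x-B(\bar U)\bar U_x-B(\bar U)v_x-(dB(\bar U)v)\bar U_x\bigr)$, i.e.\ these last two terms should enter with opposite sign so that $Q$ is genuinely quadratic in $(v,v_x)$; this does not affect your argument, which correctly identifies how the terms must cancel.
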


\begin{proof}
Taylor expansion comparing \eqref{eqn:1nlper} and
$\bar U_t + f(\bar U)_x-(B(\bar U)\bar U_{x})_x-g(\bU)=0$.
\end{proof}

\subsection{Cancellation estimate}\label{s:cancellation}

Our strategy in writing \eqref{veq} is motivated by the following
basic cancellation principle.

\begin{prop}[\cite{HoZ}]\label{p:cancellation}
For any $f(y,s)\in L^p \cap C^2$ with $f(y,0)\equiv 0$, there holds
\be\label{e:cancel}
\int^t_0 \int G(x,t-s;y) (\partial_s - L_y)f(y,s) dy\,ds
= f(x,t).
\ee
\end{prop}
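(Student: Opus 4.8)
The plan is to verify the cancellation identity \eqref{e:cancel} by a direct computation using the defining property of the Green function $G(x,t;y)$ of the linear equation \eqref{e:lin}, namely that $(\partial_t - L)G(\cdot,t;y) = 0$ for $t>0$ with $G(\cdot,0;y) = \delta_y(\cdot)$, together with an integration by parts in $s$. First I would fix $x$ and $t>0$ and write the left-hand side as
\[
I := \int_0^t \int G(x,t-s;y)\,\partial_s f(y,s)\,dy\,ds
\;-\;\int_0^t \int G(x,t-s;y)\,(L_y f)(y,s)\,dy\,ds,
\]
treating the two pieces separately. For the second piece I would move $L_y$ onto $G$ by integrating by parts in $y$ (the boundary terms vanishing because all functions are spatially decaying or periodic as appropriate, and $f(\cdot,s)\in L^p\cap C^2$), using that the formal adjoint action of $L_y$ on $y\mapsto G(x,t-s;y)$ reproduces, up to the time variable, the action of $L$ in the backward sense; concretely one uses $(\partial_{t-s} - L)G = 0$ interpreted in the $y$-argument via the semigroup/kernel identity. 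This is the step where the precise sense in which $G$ solves the adjoint equation in $y$ must be invoked.

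Next I would integrate the first piece by parts in $s$:
\[
\int_0^t \int G(x,t-s;y)\,\partial_s f(y,s)\,dy\,ds
= \Big[\int G(x,t-s;y) f(y,s)\,dy\Big]_{s=0}^{s=t}
+ \int_0^t \int \partial_s\big(G(x,t-s;y)\big) f(y,s)\,dy\,ds.
\]
The boundary term at $s=0$ vanishes by the hypothesis $f(y,0)\equiv 0$; the boundary term at $s=t$ is $\int \delta_y(x) f(y,t)\,dy = f(x,t)$ since $G(\cdot,0;y)=\delta_y(\cdot)$. Here $\partial_s G(x,t-s;y) = -(\partial_\tau G)(x,\tau;y)\big|_{\tau=t-s}$, and by the equation satisfied by $G$ this equals $-(L G)(x,t-s;y)$ acting in the $x$-variable, which after the integration by parts in $y$ performed above exactly cancels the second piece of $I$. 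Collecting terms gives $I = f(x,t)$, as claimed.

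The main obstacle I expect is a regularity/justification issue rather than a conceptual one: making rigorous the manipulation "$\partial_s G(x,t-s;y)$ produces $-L$ acting on the kernel" and the corresponding integration by parts in $y$, given that $G = E + \tilde G$ has the singular translational part $E = \bar U'(x) e(x,t;y)$ with $e(x,t;y)\equiv 0$ for $t\le 1$ but a nontrivial $\xi^{-1}$ (errorfunction) structure for $t\ge 1$, and that the $s$-integral runs up to $s=t$ where $G(\cdot,0;y)$ is a distribution. The clean way around this is to regard the whole identity as a restatement of Duhamel's formula / the variation-of-constants representation for the inhomogeneous linear problem $w_t - Lw = (\partial_t - L)f$, $w|_{t=0}=0$, whose unique solution is simultaneously $w = f$ (by inspection, using $f|_{t=0}=0$) and $w(x,t) = \int_0^t\int G(x,t-s;y)(\partial_s - L_y)f(y,s)\,dy\,ds$ (by the Green-function representation of solutions). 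Since this is exactly the argument of \cite{HoZ}, I would carry out the Duhamel computation at the level of the Bloch/semigroup representation, where $e^{L_\xi t}$ is a genuine $C^0$ semigroup on $L^2([0,X])$ and all the integrations by parts are elementary, and then pass back through the inverse Bloch transform; the distributional endpoint is handled by noting $\lim_{s\to t^-}\int G(x,t-s;y)f(y,s)\,dy = f(x,t)$ by strong continuity of the semigroup together with continuity of $s\mapsto f(\cdot,s)$.
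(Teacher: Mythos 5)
Your argument is essentially the same as the paper's: integrate by parts in both $y$ and $s$, move $(\partial_s - L_y)$ onto $G$ as its formal adjoint, use $G(\cdot,0;y)=\delta_y$ to recover $f(x,t)$ from the $s=t$ boundary term (and $f(\cdot,0)\equiv 0$ to kill the $s=0$ term), and observe that the remaining bulk integral vanishes because $G$ satisfies the adjoint equation in the $y$-variable; the Duhamel/uniqueness reformulation you give at the end is a clean way to make this rigorous and is exactly the spirit of the Hoff--Zumbrun citation. One small slip: your displayed $s$-integration by parts should carry a minus sign, $[\,\cdot\,]_{s=0}^{s=t}-\int_0^t\!\int \partial_s G\cdot f$, after which the cancellation with the second piece uses the standard kernel duality $L_xG=L_y^*G$ rather than its negative.
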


\begin{proof} Integrating the left hand side by parts, we obtain
\be
\int G(x,0;y)f(y,t)dy - \int G(x,t;y)f(y,0)dy
+ \int^t_0 \int
(\partial_t - L_y)^*G(x,t-s;y) f(y,s)dy\, ds.
\label{5.53.2}
\ee
Noting that, by duality,
$$
(\partial_t - L_y)^* G(x,t-s;y) = \delta(x-y) \delta(t-s),
$$
$\delta(\cdot)$ here denoting the Dirac delta-distribution,
we find that the third term on the righthand side
vanishes in \eqref{5.53.2}, while,
because $G(x,0;y) = \delta(x-y)$, the first term is simply $f(x,t)$.
The second term vanishes by $f(y,0)\equiv 0$.
\end{proof}


\subsection{Nonlinear damping estimate}

The following technical result is a key ingredient in
the nonlinear stability analysis that follows.
Applying Duhamel's principle to \eqref{veq} and using Proposition \ref{p:cancellation}
yields%
%
\ba\label{prelim}
  v(x,t)&=\int^\infty_{-\infty}G(x,t;y)v_0(y)\,dy  \\
  &\quad
  + \int^t_0 \int^\infty_{-\infty} G(x,t-s;y)
  (-Q_y+T+ R_y + S_s ) (y,s)\,dy\,ds
+ \psi (t) \bar U'(x).
\ea
Note that terms $Q_y$ and $S_s$ involve derivatives of $v$ (respectively
second derivative in space and first derivative in time) of maximal order,
hence to close a nonlinear iteration scheme based on \eqref{prelim}
would appear to require delicate maximal regularity estimates
rather than the straightforward ones that we have obtained.
Indeed, estimated using the linearized bounds of Corollary \ref{greenbds},
the righthand side
appears to lose several degrees of regularity as a function
from $H^K\to L^2$ of $v$.
However, the next proposition,
adapted from the methods
%
of \cite{MaZ4,Z3},
shows that higher-order derivatives are slaved to lower-order ones,
hence derivatives ``lost'' at the linearized level may be
``regained'' at the nonlinear level.
This effectively separates the issues of decay and regularity,
allowing us to close a nonlinear iteration without the use of
maximal regularity estimates or a more complicated quasilinear iteration scheme.

\begin{proposition}\label{damping}
Let $v_0\in H^K$ ($K$ as in (H1)), and suppose that
for $0\le t\le T$, the $H^K$ norm of $v$ and the $H^{K+1}$ norms of
$\psi_t(\cdot,t)$ and $\psi_x(\cdot,t)$
remain bounded by a sufficiently small constant.
Moreover, suppose that the Froude number $F$, viscosity $\nu$,
and velocity derivative $\bar u_x$  satisfy the amplitude condition
$\nu \bar u_x < F^{-1}$.
Then, there are constants
$C, \theta_{1}>0$ such that, for all $0\leq t\leq T$,
\begin{equation}\label{Ebounds}
\|v(t)\|_{H^K}^2 \leq C e^{-\theta_1 t} \|v(0)\|^2_{H^K} +
C \int_0^t e^{-\theta_1(t-s)} \left(\|v\|_{L^2}^2 +
\|(\psi_t, \psi_x)\|_{H^K}^2 \right) (s)\,ds.
\end{equation}
\end{proposition}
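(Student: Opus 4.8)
The plan is to establish \eqref{Ebounds} by a Friedrichs-type energy estimate applied directly to the quasilinear perturbation equation, adapting to the present periodic setting the scheme used by Mascia--Zumbrun \cite{MaZ4} and Zumbrun \cite{Z3} for partially parabolic (``real viscosity'') shock profiles, the role of the constant end states there being played here by the periodic profile $\bar U$. Subtracting the profile equation from \eqref{eqn:1nlper}, one writes the equation for $v=(v^\tau,v^u)$ in the quasilinear form
\[
v_t+\mathcal A(x,t)\,v_x=(\mathcal B(x,t)\,v_x)_x+\mathcal C(x,t)\,v+\mathcal N+\mathcal F ,
\]
where $\mathcal A$, $\mathcal B$ (of the degenerate form in \eqref{coeffs}, with $(2,2)$-entry $b>0$), and $\mathcal C$ are smooth functions of $(\bar U,\bar U_x,v)$ reducing to the $A,B,C$ of \eqref{coeffs} at $v\equiv 0$; $\mathcal N$ collects the quadratic terms $-Q_x+T$, each factor of which carries at most one derivative of $v$; and $\mathcal F=(\partial_t-L)\bar U'\psi+P+R_x+\partial_t S$ is the $\psi$-forcing, built from $\psi_t,\psi_x$ and their $x$-derivatives up to order $K+1$ multiplied by bounded functions or by $v,v_x$. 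Throughout, the hypotheses furnish a small constant $\delta$ with $\|v\|_{H^K}\le\delta$ and $\|(\psi_t,\psi_x)\|_{H^{K+1}}\le\delta$.

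The algebraic input is condition \eqref{froudebd}: by Remark \ref{froudermk1} it says precisely that the first-order part $\mathcal A$ is symmetric hyperbolic, so for $\delta$ small there is a smooth, positive-definite symmetrizer $\mathcal A_0(x,t)$, bounded above and below uniformly, with $\mathcal A_0\mathcal A$ symmetric and $\mathcal A_0\mathcal B$ symmetric positive semidefinite, and the genuine-coupling (Kawashima) condition holds because $\mathcal A$ is strictly hyperbolic with nonzero off-diagonal entries, so that no eigenvector of $\mathcal A$ lies in $\ker\mathcal B=\operatorname{span}(1,0)^T$. (It is here that the Lagrangian formulation is essential, as it is what makes these facts hold with constants independent of wave amplitude; see Remark \ref{damprmk}.) For each $0\le j\le K$, apply $\partial_x^j$ and pair with $2\mathcal A_0\partial_x^j v$ in $L^2(x)$: the diffusion $(\mathcal B v_x)_x$ contributes $-2\int\mathcal A_0\mathcal B\,\partial_x^{j+1}v\cdot\partial_x^{j+1}v\le-\theta\|\partial_x^{j+1}v^u\|_{L^2}^2$, i.e.\ parabolic dissipation of $v^u$ one order higher; $\mathcal A v_x$ is controlled by symmetry of $\mathcal A_0\mathcal A$; and $\mathcal C v$, the commutators $[\partial_x^j,\mathcal A]v_x$, $[\partial_x^j,\mathcal B\partial_x]v$, and $x$-derivatives of $\mathcal A_0$ are handled by Moser/Gagliardo--Nirenberg estimates and the smallness of $\delta$. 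To recover dissipation of $v^\tau$, add for $0\le j\le K-1$ a Kawashima compensator $\kappa\int\mathcal K\,\partial_x^{j+1}v\cdot\partial_x^j v$ with $\mathcal K$ skew-symmetric, chosen so that $\operatorname{sym}(\mathcal A^{T}\mathcal K)$ is negative in the $v^\tau$-direction, and $\kappa>0$ small: using the first-order, source- and viscosity-free structure of the $\tau$-equation, $v^\tau_t=cv^\tau_x+v^u_x+\cdots$, its time derivative produces $-\theta'\|\partial_x^{j+1}v^\tau\|_{L^2}^2$ modulo terms absorbed by the parabolic dissipation or by $\delta$-smallness. The functional
\[
\mathcal E(t):=\sum_{j=0}^{K}\int\mathcal A_0\,\partial_x^j v\cdot\partial_x^j v\,dx+\kappa\sum_{j=0}^{K-1}\int\mathcal K\,\partial_x^{j+1}v\cdot\partial_x^j v\,dx
\]
is then, for $\kappa$ small, equivalent to $\|v(t)\|_{H^K}^2$, and the preceding bounds combine to $\frac{d}{dt}\mathcal E\le-\theta_1\mathcal E+C\|v\|_{L^2}^2+C\|(\psi_t,\psi_x)\|_{H^K}^2$, whence \eqref{Ebounds} by Gronwall's inequality.

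The step I expect to be the main obstacle — and the one genuinely specific to the nonconservative problem — is the bookkeeping of $\mathcal N$ and $\mathcal F$: one must check that, after the integrations by parts above, all of their contributions land in $C\|v\|_{L^2}^2+C\|(\psi_t,\psi_x)\|_{H^K}^2$ plus absorbable terms (a small multiple of the dissipation, or the time derivative of a $\delta$-small quantity that is folded into $\mathcal E$), with no uncontrolled high-order derivative surviving and no derivative of $\psi$ of order exceeding $K+1$ required. This leans on the precise structure recorded in Lemma \ref{4.1} and Corollary \ref{4.2}: that $P$, $T$, and the $B$-dependent parts of $Q$ and $R$ are supported in the parabolic ($v^u$) component, so that, after one integration by parts, their formally too-high-order pieces involve only $\partial_x^{K+1}v^u$ and are dominated by $-\theta_1\|\partial_x^{K+1}v^u\|_{L^2}^2$; that the remaining, $v^\tau$-direction pieces of $Q_x$, $R_x$, and $\partial_t S$ come multiplied by a factor $v$ or $\psi_x$, so that a bare top-order $\partial_x^{K+1}v^\tau$ arising against such a factor can be traded, via the first-order $\tau$-equation, for $\partial_x^{K+1}v^u$ (controlled) plus $\partial_t\partial_x^K v^\tau$ (moved into $\mathcal E$) plus lower-order terms; and that the quadratic structure of $Q$ leaves the residual terms in the manageable form $\int v^\tau_x|\partial_x^K v^\tau|^2$, small by $\delta$. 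Organizing this so that everything closes, together with the essentially algebraic verification of uniform symmetrizability from \eqref{froudebd}, is the substance of the proof; the energy-estimate skeleton is otherwise standard.
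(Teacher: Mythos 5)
Your proposal reproduces the paper's own argument in all essentials: a Friedrichs symmetrizer $\Sigma=\mathrm{diag}(1,\delta^{-2})$ furnished by the amplitude condition \eqref{froudebd}, a skew-symmetric Kawashima compensator to recover dissipation in the degenerate $\tau$-direction, an energy functional equivalent to $\|v\|_{H^K}^2$ built from these and their higher-derivative analogues, and Gronwall, with the crucial observation that the nonconservative sources are confined to the parabolic $(0,1)^T$-component so that their contributions absorb. The only organizational difference is that you quasilinearize (folding the nonlinearity into $v$-dependent coefficients $\mathcal A,\mathcal B,\mathcal C$ and a $v$-dependent symmetrizer $\mathcal A_0$), whereas the paper keeps the frozen linearized coefficients $A,B,C$ of \eqref{coeffs}, with a fixed $\bar U$-dependent $\Sigma$ and the $(1+\psi_x)^{-1}$ weight, and collects all nonlinear and $\psi$-forcing terms into a residual $\mathcal N$ estimated separately using the block-structure dictionary \eqref{dictionary}--\eqref{dictC}; both are standard variants of the same scheme.
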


The proof of this result will be given in Appendix \ref{s:energy}.
Here,
we briefly outline the main ideas.  First, notice that by
subtracting from the equation \eqref{altform} for $U$
the equation for $\bar U$, we may write the
nonlinear perturbation equation as
\ba\label{vperturteq}
v_t + (Av)_x-(Bv_{x})_x-Cv&= P- Q(v)_x +T(v)
+ \tilde U_x \psi_t -\tilde U_{t} \psi_x \\
&\quad +g(\bar{U})\psi_x-\left(B(\tilde{U})\tilde{U}_x\psi_x\right)_x
\ea
where $A$, $B$, $C$ are as in \eqref{coeffs},
$P$, $Q$, and $T$ are as in
Corollary \ref{4.2},
$g$ and $B$ are as in \eqref{ab},
and it is understood that derivatives of $\tilde U$ appearing
on the righthand side
are evaluated at $(x+\psi(x,t),t)$.
Using \eqref{solvedderivs} to replace $\tilde U_x$ and
$\tilde U_t$ respectively by
$\bar U_x + v_x -(\bar U_x+v_x) \frac{\psi_x}{1+\psi_x}$
and
$\bar U_t + v_t -(\bar U_x+v_x) \frac{\psi_t}{1+\psi_x}$,
and moving the resulting $v_t\psi_x$ term to the lefthand side
of \eqref{vperturteq}, we obtain
\ba\label{vperturteq2}
(1+\psi_x) v_t&=(Bv_{x})_x-(A v)_x+ Cv+ P- Q(v)_x +T(v)\\
&\quad\quad+ \left(\bar U_x+v_x\right)\psi_t+g(\bar{U})\psi_x
-\left(B(\tilde{U})\left(\bar{U}_x+v_x\right)\frac{\psi_x}{1+\psi_x}\right)_x
\ea

Define now the Friedrichs symmetrizer
\be\label{Sigma}
\Sigma:=\bp 1 & 0\\ 0& \delta^{-2}\ep,
\ee
where $\delta^2:= -A_{12}= \bar \tau^{-3}(F^{-1}-\nu \bar u_x)$.  By
\eqref{froudebd}, $\Sigma$ is a symmetric positive definite symmetrizer for the hyperbolic
part of \eqref{vperturteq2} in the sense that
$
\Sigma A=\bp -c & -1 \\-1 &-c\delta^{-2}\ep
$
is a symmetric matrix,
where $A$ is as in \eqref{coeffs}.
Furthermore, to compensate for the lack of total parabolicity
of the governing equation, here indicated by the presence of a neutral eigenspace of
the matrix $\Sigma B$, we
introduce
the skew-symmetric Kawashima compensator
\be\label{K}
K:=\eta\bp 0 & -1\\1 & 0\ep, \quad 0<\eta\ll 1
\ee
and note that, in particular, for $\eta>0$ sufficiently small there exists a constant $\theta>0$
such that %
$\Re (KA+\Sigma B)\geq\theta$.

Now defining the functional
\[
\mathcal{E}[v]:=\left<v,\Sigma v\right>+\sum_{j=1}^K\left(\left<\partial_x^jv,K\partial_x^{j-1}v\right>
                      +\left<\partial_x^jv,\Sigma\partial_x^jv\right>\right)
\]
where here $\left<\cdot,\cdot\right>$ denotes the standard $L^2(\RM^n)$ inner product,
we find by a direct
%
but lengthy calculation using Sobolev embedding and interpolation
to absorb nonlinear and intermediate-derivative terms that
\be\label{Eineq}
\partial_t \calE(v) \leq -{\theta_1} \|v\|_{H^K}^2 +
C\left( \|v\|_{L^2}^2+ \|(\psi_t, \psi_x)\|_{H^K(x,t)}^2 \right)
\ee
for some positive constants $C, \theta_1 >0$, so long as $\|\tilde{U}\|_{H^K}$ remains bounded
and the quantities $\eta>0$, $\|v\|_{H^K}$ and $\|(\psi_t,\psi_x)\|_{H^K(x)}$ remain sufficiently small.
By Cauchy--Schwarz and the fact that $\Sigma$ is positive definite by \eqref{froudebd}, we have  $\mathcal{E}(v)\sim\|v\|_{H^K}^2$ for $\eta>0$
sufficiently small and hence \eqref{Eineq} implies
\[
\partial_t \calE(v) \leq -{\theta_1}\calE(v)+
C\left( \|v\|_{L^2}^2+ \|(\psi_t, \psi_x)\|_{H^K(x,t)}^2 \right)
\]
from which (\ref{Ebounds}) follows by Gronwall's inequality and, again, the equivalence of $\CalE(v)$
and $\|v\|_{H^K}^2$.

For more details and a complete proof of the key inequality \eqref{Eineq}, see Appendix \ref{s:energy}.

\br\label{froudermk}
\textup{
The condition \eqref{froudebd} gives effectively an
upper bound on the allowable amplitude of the wave,
for fixed Froude number and viscosity.
It is not clear that this has any connection with behavior.
Certainly it is needed for our argument structure, and
perhaps even for the validity of \eqref{Ebounds}, which is
itself convenient but clearly not necessary for stability.
The resolution of this issue would be very interesting from the
standpoint of applications, both in this and related contexts.
}
\er

\br\label{damprmk}
\textup{
The Lagrangian formulation appears essential here
in order to obtain quantitative bounds like \eqref{froudebd}
on the amplitude of the wave.
One can carry out damping estimates
for
sufficiently small-amplitude waves in Eulerian coordinates by the
argument of  \cite{MaZ2} in the shock wave case; however,
the large-amplitude argument of \cite{MaZ4}, depending on
global noncharacteristicity of the wave-- corresponding here
to nonvanishing of $u-s$, where $s$ is wave speed
in Eulerian coordinates-- together with bounded variation
of $\bar U_x$,
%
appears to fail irreparably
in the periodic case.
As we have shown here, the same argument succeeds in Lagrangian
coordinates provided that the linearized convection matrix $A$
is symmetrizable (the meaning of bound \eqref{froudebd}).
For similar observations regarding the advantages for energy
estimates of the special structure in Lagrangian coordinates, see \cite{TZ1}.
}
\er

\subsection{Integral representation/$\psi$-evolution scheme}

Recalling the Duhamel representation \eqref{prelim} of the perturbation $v$
along with the decomposition $G=\bar U'(x)e+\tilde{G}$ of Corollary \ref{greenbds},
we find that defining $\psi$ implicitly as 
\ba
  \psi (x,t)& =-\int^\infty_{-\infty}e(x,t;y) U_0(y)\,dy \\
&\quad
  -\int^t_0\int^{+\infty}_{-\infty} e(x,t-s;y)
  (P-Q_y+ T+R_y + S_s ) (y,s)\,dy\,ds ,
 \label{psi}
\ea
where $e$ is defined as in \eqref{edef}, 
results in the {\it integral representation}
\ba \label{u}
  v(x,t)&=\int^\infty_{-\infty} \tilde G(x,t;y)v_0(y)\,dy \\
&\quad
  +\int^t_0\int^\infty_{-\infty}\tilde G(x,t-s;y)
  (P-Q_y+ T+R_y + S_s  ) (y,s)\,dy\,ds ,
\ea
for the nonlinear perturbation $v$; see \cite{Z1,MaZ2} for further details.  Furthermore, differentiating (\ref{psi}) with respect to $t$,
and recalling that $e(x,s;y)\equiv 0$ for $s \le 1 $,
\ba \label{psidot}
   \partial_t^j\partial_x^k \psi (x,t)&=-\int^\infty_{-\infty}\partial_t^j\partial_x^k
e(x,t;y) U_0(y)\,dy \\
&\quad
  -\int^t_0\int^{+\infty}_{-\infty} \partial_t^j\partial_x^k
e(x,t-s;y)
  (P-Q_y+ T+R_y + S_s) (y,s)\,dy\,ds .
  \ea
Equations \eqref{u}, \eqref{psidot}
together form a complete system in the variables $(v,\partial_t^j \psi,
\partial_x^k\psi)$,
$0\le j, k\le K+1$,
from the solution of which we may afterward recover the
shift $\psi$ via \eqref{psi}.
From the original differential equation \eqref{veq}
together with \eqref{psidot},
we readily obtain short-time existence and continuity with
respect to $t$ of solutions
$(v,\psi_t, \psi_x)\in H^K$
by a standard contraction-mapping argument based on \eqref{Ebounds},
\eqref{psi}, and \eqref{etbds}.

\subsection{Nonlinear iteration}

Associated with the solution $(U, \psi_t, \psi_x)$ of integral system
\eqref{u}--\eqref{psidot}, define
\ba\label{szeta}
\zeta(t)&:=\sup_{0\le s\le t}
 \|(v, \psi_t,\psi_x)\|_{H^K}(s)(1+s)^{1/4} .
\ea

\bl\label{sclaim}
For all $t\ge 0$ for which $\zeta(t)$ is
finite and sufficiently small,
some $C>0$,
and $E_0:=\|U_0\|_{L^1\cap H^K}$ sufficiently small,
\be\label{eq:sclaim}
\zeta(t)\le C(E_0+\zeta(t)^2).
\ee
\el

\begin{proof}
By \eqref{eqn:P}--\eqref{eqn:S} and \eqref{eqn:Q}--\eqref{eqn:T} 
and corresponding bounds on the derivatives together
with definition \eqref{szeta},
\ba\label{sNbds}
\|(P,Q,R,S,T)\|_{L^1\cap H^2}
&\le \|(v,v_x,\psi_t,\psi_x)\|_{L^2}^2+
\|(v,v_x,\psi_t,\psi_x)\|_{H^2}^2
\le C\zeta(t)^2 (1+t)^{-\frac{1}{2}},\\
\ea
so long as $|\psi_x|\le |\psi_x|_{H^K}\le \zeta(t)$ remains small.
%
Applying Corollary \ref{greenbds} with $q=1$ 
to representations
\eqref{u}--\eqref{psidot}, we obtain for any $2\le p<\infty$
\ba\label{sest}
\|v(\cdot,t)\|_{L^p(x)}& \le
C(1+t)^{-\frac{1}{2}(1-1/p)}E_0 \\
&\quad +
C\zeta(t)^2\int_0^{t} (1+t-s)^{-\frac{1}{2}(1/2-1/p)}(t-s)^{-\frac{3}{4}}
(1+s)^{-\frac{1}{2}}ds\\
&
\le
 C(E_0+\zeta(t)^2) (1+t)^{-\frac{1}{2}(1-1/p)}
\ea
and
\ba\label{sestad}
\|(\psi_t,\psi_x)(\cdot, t)\|_{W^{K+1,p}}& \le
C(1+t)^{-\frac{1}{2}}E_0 +
C\zeta(t)^2\int_0^{t} (1+t-s)^{-\frac{1}{2}(1-1/p)-1/2}
(1+s)^{-\frac{1}{2}}ds \\
&\le
 C(E_0+\zeta(t)^2) (1+t)^{-\frac{1}{2}(1-1/p)},
\ea
yielding in particular that $\|(\psi_t,\psi_x)\|_{H^{K+1}}$
be arbitrarily small, verifying the hypotheses of Proposition
\ref{damping}.\footnote{Note that we have gained a necessary one degree
of regularity in $\psi$, the regularity of $\psi$ being limited
only by the regularity of the coefficients of the underlying
PDE \eqref{eqn:1conslaw}.}
Using \eqref{Ebounds} and \eqref{sest}--\eqref{sestad},
we thus obtain
$\|v(\cdot,t)\|_{H^K(x)} \le
 C(E_0+\zeta(t)^2) (1+t)^{-\frac{1}{4}}$.
Combining this with \eqref{sestad}, $p=2$, rearranging, and recalling
definition \eqref{szeta}, we obtain \eqref{sclaim}.
\end{proof}

\begin{proof}[Proof of Theorem \ref{main}]
By short-time $H^K$ existence theory,
$\|(v,\psi_t,\psi_x)\|_{H^{K}}$ is continuous so long as it
remains small, hence $\zeta$ remains
continuous so long as it remains small.
By \eqref{sclaim}, therefore,
it follows by continuous induction that
$\zeta(t) \le 2C E_0$ for $t \ge0$, if $E_0 < 1/ 4C$,
yielding by (\ref{szeta}) the result (\ref{eq:smallsest}) for $p=2$.
Applying \eqref{sest}--\eqref{sestad}, we obtain
(\ref{eq:smallsest}) for $2\le p\le p_*$ for any $p_*<\infty$,
with uniform constant $C$.
Taking $p_*>4$ and estimating
\[
\|P\|_{L^2}, \, \|Q\|_{L^2}, \, \|R\|_{L^2}, \, \|S\|_{L^2},\, \|T\|_{L^2}(t)
\le \|(v,\psi_t,\psi_x)\|_{L^4}^2\le CE_0(1+t)^{-\frac{3}{4}}
\]
in place of the weaker \eqref{sNbds},
then applying Corollary \ref{greenbds} with $q=2$ 
we obtain finally \eqref{eq:smallsest} for $2\le p\le \infty$,
by a computation similar \eqref{sest}--\eqref{sestad};
we omit the details of this final bootstrap argument.
Estimate \eqref{eq:stab} then follows using \eqref{etbds} with
$q=1$, by
\ba\label{sesta}
\|\psi(t)\|_{L^p}& \le
C E_0 (1+t)^{\frac{1}{2p}}
+
C\zeta(t)^2\int_0^{t} (1+t-s)^{-\frac{1}{2}(1-1/p)}
(1+s)^{-\frac{1}{2}}ds\\
& \le
C(1+t)^{\frac{1}{2p}}(E_0+\zeta(t)^2),
\ea
together with the fact that
$ \tilde U(x,t)-\bar U(x)= v(x-\psi,t)+ \bar U(x)-\bar U(x-\psi), $
so that $|\tilde U(\cdot, t)-\bar U|$ is controlled
by the sum of $|v|$ and
$|\bar U(x)-\bar U(x-\psi)|\sim |\psi|$.
This yields stability for $|U-\bar U|_{L^1\cap H^K}|_{t=0}$
sufficiently small, as described in the final line of the theorem.
\end{proof}
\appendix

\section{Nonlinear energy estimate}\label{s:energy}

The goal of this
appendix is to prove the inequality \eqref{Eineq}, which was the key ingredient
in the nonlinear energy estimate in Proposition \eqref{damping}.  To this end, we write the nonlinear perturbation equation
\eqref{vperturteq2} for the variable $v=(\tau,u)^T$ as
\ba\label{linperturteq1}
(1+\psi_x) v_t=(Bv_{x})_x-(A v)_x+ Cv+ \left(\bar U_x+v_x\right)\psi_t+g(\bar{U})\psi_x+\mathcal{N}
\ea
where the function $\mathcal{N}=\mathcal{N}(v,\bar{U}_x,\psi_x,\psi_t)$ is defined by
\begin{equation}\label{N}
\mathcal{N}:=P- Q(v)_x +T(v)-\left(B(\tilde{U})\left(\bar{U}_x+v_x\right)\frac{\psi_x}{1+\psi_x}\right)_x
\end{equation}
where $P$, $Q$, and $T$ are defined as in \eqref{eqn:P}, \eqref{eqn:Q}, and \eqref{eqn:T}, respectively.
The key to the analysis is to carefully keep track of the ``hyperbolic" ($\tau$) and ``parabolic" ($u$) components of $v$ separately.
We begin by symmetrizing the linearized convection matrix $A$ of \eqref{linperturteq1} by introducing the Friedrichs
symmetrizer $\Sigma$ defined in \eqref{Sigma} as
\[
\Sigma=\bp 1&0\\ 0&\delta^{-2}\ep,\quad\delta^2=\bar{\tau}^{-3}\left(\frac{1}{F}-\nu\bar{u}_x\right),
 \]
noting in particular that it is symmetric positive definite by the amplitude condition \eqref{froudebd}.
The fact that
$
\Sigma A=\bp -c & -1 \\-1 &-c\delta^{-2}\ep
$
is symmetric then yields hyperbolic properties of the solution using straightforward energy estimates, integration by
parts, and  the Friedrichs symmetrizer relation
\[
\Re\left<U,SU_{x}\right>=-\frac{1}{2}\left<U,S_x U\right>
\]
valid for all self-adjoint operators $S\in \CM^{n\times n}$ and $U\in \CM^n$.
Furthermore, for convenience we provide here a list of the block-structure of the various matrices arising in the forthcoming proofs:
notice by definition that
\be\label{dictionary}
\quad B,\;B_x=\bp 0 &0\\ 0&*\ep,
\quad  A_x,\, A_{xx},= \bp 0 & 0\\ * & 0\ep,
\quad C,\;C_x=\bp 0&0\\*&*\ep,
\ee
which immediately yields
\begin{align}
\Sigma B,\;\Sigma B_x,\;\Sigma_x B,\;\Sigma_{xx}B,\;\Sigma_x B_x&=\bp 0&0\\ 0&*\ep\label{dictB}\\
\Sigma A_x,\, \Sigma_x A,\, \Sigma_{xx} A,\,
\Sigma_x A_x,\, \Sigma A_{xx} &= \bp 0 & 0\\ 0 & *\ep\label{dictA}
\end{align}
and
\be\label{dictC}
\Sigma C=\Sigma C_x=\bp 0&0\\ * &*\ep.
\ee
We will continually refer back to these dictionaries throughout the proofs in this appendix.

\br\label{genstructure}
\textup{
The apparently special structure leading to \eqref{dictionary}
is in fact a special case of the more general structure pointed
out in \cite{TZ1},\footnote{
See conditions (A1)--(A2) of \cite{TZ1,Z4}.
} shared by the equations of 1-D
gas dynamics, MHD, and viscoelasticity \cite{TZ1,BLeZ} when expressed in
Lagrangian coordinates.
}
\er

Defining the first-order ``Friedrichs bilinear form" as
\[
\mathcal{F}_1[v_1,v_2]:=\left<v_1,\Sigma v_2\right>+\left<\partial_x v_1,\Sigma \partial_xv_2\right>
\]
our first step in proving 
Proposition \eqref{damping} is to establish the following lemma.

\begin{lemma}\label{l:Fried1}
Let $v(\cdot,0)\in H^1$ and suppose that for $0\leq t\leq T$, the $H^1$ norm of $v$ and the $H^2$ norms of $\psi_x$ and $\psi_t$
remain bounded by a sufficiently small constant.  Moreover, suppose that the amplitude condition \eqref{froudebd} holds.
Then we have the first-order ``Friedrichs-type" estimate
\begin{equation}
\begin{aligned}\label{Fried1}
\frac{1}{2}\frac{d}{dt}\mathcal{F}_1[v,v]
         &\leq-\left<v_x,w\Sigma Bv_x\right>-\left<v_{xx},w\Sigma Bv_{xx}\right>\\
&\quad\quad+C_1\left(\|v\|_{L^2}^2+\frac{1}{\eps}\|u_x\|_{L^2}^2+\eps\|\tau_x\|_{L^2}^2\right)\\
&\quad\quad\quad\quad +C_2\|\psi_x\|_{H^2}\left(\|v\|_{H^1}^2+\|u_{xx}\|_{L^2}^2\right)\\
&\quad\quad\quad\quad\quad\quad+\frac{C_3}{\eps}\left(\|\psi_t\|_{H^1}^2+\|\psi_x\|_{H^1}^2\right)+\mathcal{F}_1[v,w\mathcal{N}]
\end{aligned}
\end{equation}
valid for all $0\leq t\leq T$, for some constants $C_{1,2}>0$ where $w:=(1+\psi_x)^{-1}\in L^\infty$.
\end{lemma}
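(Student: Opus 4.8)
The plan is to prove \eqref{Fried1} by a weighted energy estimate on the perturbation equation \eqref{linperturteq1}, using the Friedrichs symmetrizer $\Sigma$ to cancel the top-order hyperbolic term and the block dictionaries \eqref{dictionary}--\eqref{dictC} to keep the $\tau$- and $u$-derivatives of $v=(\tau,u)^T$ cleanly separated. First divide \eqref{linperturteq1} by $1+\psi_x$, writing $v_t=w\,(\mathrm{RHS})$ with $w:=(1+\psi_x)^{-1}$ and $\mathrm{RHS}:=(Bv_x)_x-(Av)_x+Cv+(\bar U_x+v_x)\psi_t+g(\bar U)\psi_x+\mathcal N$; since $\Sigma$ is $t$-independent, $\tfrac{1}{2}\tfrac{d}{dt}\langle v,\Sigma v\rangle=\langle v,\Sigma v_t\rangle=\langle w\Sigma v,\mathrm{RHS}\rangle$ (using that $w$ is scalar and $\Sigma=\Sigma^T$), and differentiating $v_t=w\,(\mathrm{RHS})$ once in $x$ gives $\tfrac{1}{2}\tfrac{d}{dt}\langle v_x,\Sigma v_x\rangle=\langle v_x,\Sigma(w\,\mathrm{RHS})_x\rangle$. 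All integrations by parts below are boundary-term free since $v\in H^K(\RM)$, and the $\mathcal N$-contributions combine exactly into $\langle v,\Sigma(w\mathcal N)\rangle+\langle v_x,\Sigma(w\mathcal N)_x\rangle=\mathcal F_1[v,w\mathcal N]$, which is carried along untouched.

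The two good viscous terms come out as follows: integrating $\langle w\Sigma v,(Bv_x)_x\rangle$ by parts gives $-\langle v_x,w\Sigma Bv_x\rangle$ plus a remainder $-\langle(w\Sigma)_xv,Bv_x\rangle$, which by \eqref{dictB} (only a $(2,2)$ entry in $\Sigma B$) is $O(\|v\|_{L^2}\|u_x\|_{L^2})$ and is Young-split into $C_1(\|v\|_{L^2}^2+\eps^{-1}\|u_x\|_{L^2}^2)$, with the $w_x=O(\psi_{xx})$ piece deferred; the same computation at the $v_x$ level, applied to the $(Bv_x)_{xx}$ term inside $\langle v_x,\Sigma(w\,\mathrm{RHS})_x\rangle$, yields $-\langle v_{xx},w\Sigma Bv_{xx}\rangle$, whose positivity (for $\psi_x$ small) is used to absorb the cross term $\langle w\Sigma v_{xx},B_xv_x\rangle=O(\|u_{xx}\|_{L^2}\|u_x\|_{L^2})$. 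The key cancellation is in the convection term: writing $(Av)_x=A_xv+Av_x$, the dangerous part $\langle w\Sigma v,Av_x\rangle=\langle wv,\Sigma Av_x\rangle$ is rewritten, via the weighted form of the Friedrichs identity $\Re\langle v,Sv_x\rangle=-\tfrac{1}{2}\langle v,S_xv\rangle$ for $S=S^T$, as $-\tfrac{1}{2}\int w_x\,v^T\Sigma Av-\tfrac{1}{2}\int w\,v^T(\Sigma A)_xv$. Since the off-diagonal and $(1,1)$ entries of the symmetric matrix $\Sigma A$ are constant, $(\Sigma A)_x=\mathrm{diag}(0,-c(\delta^{-2})_x)$, so the second integral is $O(\|u_x\|_{L^2}^2)$ — crucially not $O(\|\tau_x\|_{L^2}^2)$ — while the first carries $w_x=O(\psi_{xx})$; the $v_x$-level version handles $Av_{xx}$ verbatim, and $\Sigma A_x$, $\Sigma A_{xx}$ are off-diagonal-free by \eqref{dictA}.

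The genuinely new contribution — the sole source of the tolerated $\eps\|\tau_x\|_{L^2}^2$ in \eqref{Fried1} — comes from $\langle v_x,w\Sigma Cv_x\rangle$, which by \eqref{dictC} is an integral of the form $\int w(*\,\tau_xu_x+*\,u_x^2)\,dx$ and is Young-split into $\eps\|\tau_x\|_{L^2}^2+C\eps^{-1}\|u_x\|_{L^2}^2$. The remaining terms are routine: $\langle w\Sigma v,A_xv+Cv\rangle$ is $O(\|v\|_{L^2}^2)$; the forcing terms $\langle w\Sigma v,(\bar U_x+v_x)\psi_t+g(\bar U)\psi_x\rangle$ and their $v_x$-level analogues, after moving a derivative off the $v_x\psi_t$ piece, are Young-split into $C_1\|v\|_{L^2}^2+C_3\eps^{-1}(\|\psi_t\|_{H^1}^2+\|\psi_x\|_{H^1}^2)$; and every term carrying a factor $w_x$, $\psi_{xx}$, $\psi_t$ or $\psi_{tx}$ produced by differentiating the linear part of $\mathrm{RHS}$ — all uniformly small by hypothesis — is bounded, using the one-dimensional embedding $\|f\|_{L^\infty}\le C\|f\|_{H^1}$, by $C_2\|\psi_x\|_{H^2}(\|v\|_{H^1}^2+\|u_{xx}\|_{L^2}^2)$, the $\|u_{xx}\|_{L^2}^2$ appearing because $w_x$ times the viscous contribution $(Bv_x)_x$ is $O(\|\psi_{xx}\|_{L^\infty}\|u_x\|_{L^2}\|u_{xx}\|_{L^2})$. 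Summing the two identities and collecting yields \eqref{Fried1}.

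The main obstacle is not any individual estimate but the accounting: at this pre-Kawashima stage nothing damps the hyperbolic component, so one must verify that $\|\tau_x\|_{L^2}$ (and $\|\tau_{xx}\|_{L^2}$) never appears with an $O(1)$ coefficient. This works because the only non-constant ingredient of $\Sigma A$, $\Sigma B$, $\Sigma C$ whose $x$-derivative can fall on a $\tau$-slot is $\delta^2=\bar\tau^{-3}(F^{-1}-\nu\bar u_x)$ — whose positivity is precisely the amplitude condition \eqref{froudebd} making $\Sigma>0$ — and $\delta^2$ sits only in the $(2,2)$ slot of all three matrices, so its derivatives always land on $u$-components; every remaining $\tau$-derivative is either annihilated by the $\Sigma A$-symmetry cancellation, weighted by a small factor $O(\psi_{xx})$, or Young-split into $\eps\|\tau_x\|_{L^2}^2$. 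A secondary subtlety is that the weight $w$ is non-constant, so every integration by parts spawns $w_x$-terms; these are harmless once $\|\psi_x\|_{H^2}$ (and $\|\psi_t\|_{H^2}$) is small, which is exactly the hypothesis of the lemma.
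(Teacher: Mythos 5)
Your argument is correct and follows the paper's route almost line for line: you compute $\tfrac{1}{2}\tfrac{d}{dt}\langle v,\Sigma v\rangle$ and $\tfrac{1}{2}\tfrac{d}{dt}\langle v_x,\Sigma v_x\rangle$, extract the good viscous forms by parts, cancel the leading transport with the Friedrichs identity $\Re\langle v,Sv_x\rangle=-\tfrac12\langle v,S_xv\rangle$ for symmetric $S$, and use the block dictionaries \eqref{dictB}--\eqref{dictC} together with Young's inequality to ensure $\tau$-derivatives appear only with $\eps$- or $\psi_x$-weighted coefficients, exactly as in the paper. One small slip: the symmetrizer residual $-\tfrac12\langle v,w(\Sigma A)_xv\rangle$ at the zeroth-derivative level is $O(\|u\|_{L^2}^2)$ (hence $O(\|v\|_{L^2}^2)$), not $O(\|u_x\|_{L^2}^2)$ as written---that estimate belongs to the first-derivative analogue $-\tfrac12\langle v_x,w(\Sigma A)_xv_x\rangle$---but this is immaterial to the conclusion.
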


\begin{proof}
First, notice from \eqref{linperturteq1} and the symmetry of $\Sigma$ we have
\begin{align*}
\frac{1}{2}\frac{d}{dt}\left<v,\Sigma v\right>&=\left<v,\Sigma v_t\right>\\
&=\left<v,w\Sigma\left((Bv_x)_x-(Av_x)+Cv+\bar{U}_x\psi_t+v_x\psi_t+g(\bar{U})\psi_x+\mathcal{N}\right)\right>
\end{align*}
where $w:=(1+\psi_x)^{-1}$.  Since $\Sigma B$ is symmetric by \eqref{dictB} then, we have
\begin{align*}
\left<v,w\Sigma(Bv_x)_x\right>&=-\left<(w\Sigma_x)v,Bv_x\right>-\left<v_x,w\Sigma Bv_x\right>
=\frac{1}{2}\left<v,\left(\left(w\Sigma\right)_xB\right)_xv\right>-\left<v_x,w\Sigma Bv_x\right>
\end{align*}
and similarly
\[
\left<v,w\Sigma\left(Av\right)_x\right>=\left<v,w\Sigma A_x v\right>-\frac{1}{2}\left<v,\left(w\Sigma A\right)_xv\right>.
\]
Furthermore, assuming that $\|\psi_t\|_{H^2}$ remains bounded we clearly have the estimate
\begin{align*}
\left<v,w\Sigma \left((\bar{U}_x+v_x)\psi_t+g(\bar{U})\psi_x\right)\right>&=\left<v,w\Sigma\bar{U}_x\psi_t\right>-\frac{1}{2}\left<v,\left(w\Sigma\psi_t\right)_xv\right>
                +\left<v,w\Sigma g(\bar{U})\psi_x\right>\\
&\lesssim \left(\|v\|_{L^2}^2+\|\psi_t\|_{L^2}^2+\|\psi_x\|_{L^2}^2\right)
\end{align*}
which, by using Cauchy-Schwarz, immediately yields the zeroth-order estimate
\[
\frac{1}{2}\frac{d}{dt}\left<v,\Sigma v\right>\leq-\left<v_x,w\Sigma B v_x\right>+C\left(\|v\|_{L^2}^2+\|\psi_t\|_{L^2}^2+\|\psi_x\|_{L^2}^2\right)+\left<v,w\Sigma\mathcal{N}\right>
\]
for some positive constant $C>0$.

Continuing, we find that
\begin{align*}
\frac{1}{2}\frac{d}{dt}\left<v_x,w\Sigma v_x\right>&=\left<v_x,w_x\Sigma\left((Bv_x)_x-(Av)_x+Cv+\left(\bar{U}_x+v_x\right)\psi_t+g(\bar{U})\psi_x\right)\right>\\
&\quad\quad+\left<v_x,w\Sigma\left((Bv_x)_{xx}-(Av)_{xx}+(Cv)_x+\left((\bar{U}_x+v_x)\psi_t+g(\bar{U})\psi_x\right)_{x}\right)\right>\\
&\quad\quad\quad\quad+\left<v_x,\Sigma\left(w\mathcal{N}\right)_x\right>\\
&=:I_1+I_2+\left<v_x,\Sigma\left(w\mathcal{N}\right)_x\right>.
\end{align*}
To estimate $I_1$, 
notice that \eqref{dictB} immediately yields
\begin{align*}
\left<v_x,w_x\Sigma(Bv_x)_x\right>&=\left<v_x,w_x\Sigma B_xv_x\right>+\left<v_{x},w_x\Sigma Bv_{xx}\right>
\lesssim\|w_x\|_{L^\infty}\|u_x\|_{H^1}^2. 
\end{align*}
and that, similarly, we have the estimates
\begin{align*}
\left<v_x,w_x\Sigma(Av)_x\right>,~\left<v_x,w_x\Sigma Cv\right>&\lesssim\|w_x\|_{L^\infty}\|v\|_{H^1}^2,
\end{align*}
by \eqref{dictA} and \eqref{dictC}.  Finally, noting that for $\|\psi_t\|_{L^\infty}$ bounded we have
\[
\left<v_x,w_x\Sigma\left((\bar{U}_x+v_x)\psi_t+g(\bar{U})\psi_x\right)\right>\lesssim\|w_x\|_{L^\infty}\left(\|v\|_{H^1}^2+\|\psi_t\|_{L^2}^2+\|\psi_x\|_{L^2}^2\right),
\]
we see that
together these yield the estimate
\[
I_1\lesssim\|w_x\|_{L^\infty}\left(\|v\|_{H^1}^2+\|u_{xx}\|_{L^2}^2+\|\psi_t\|_{L^2}^2+\|\psi_x\|_{L^2}^2\right).
\]

To obtain the analogous estimate on $I_2$, 
first notice that \eqref{dictB} and the boundedness of $\|w\|_{L^\infty}$, together with Young's inequality,
imply
%
\begin{align*}
\left<v_x,w\Sigma\left(Bv_x\right)_{xx}\right>&=-\left<(w\Sigma)_xv_x+w\Sigma v_{xx},(Bv_x)_x\right>\\
&=-\left<v_{xx},w\Sigma Bv_{xx}\right>-\left<v_{xx},w\Sigma B_xv_x\right>\\
&\quad\quad -\left<v_x,\left(w\Sigma\right)_{x}B_xv_x\right>-\left<v_x,\left(w\Sigma\right)_xBv_{xx}\right>\\
&\leq -\left<v_{xx},w\Sigma Bv_{xx}\right>+\widetilde{C}_1\left(\frac{1}{\eps}\|u_x\|_{L^2}^2+\eps\|u_{xx}\|_{L^2}^2\right)\\
&\quad\quad+\widetilde{C}_2\|w_x\|_{L^\infty}\left(\|u_x\|_{L^2}^2+\|u_{xx}\|_{L^2}^2\right)
\end{align*}
for some constants $\widetilde{C}_1,\widetilde{C}_2>0$, where $\eps>0$ is a sufficiently small constant to be chosen later.
Similarly, 
using \eqref{dictA} and \eqref{dictC} we find that
\begin{align*}
\left<v_x,w\Sigma\left(Av\right)_{xx}\right>&=\left<v_x,w\Sigma\left(A_{xx}v+A_xv_x\right)\right>-\frac{1}{2}\left<v_x,\left(w\Sigma A\right)_xv_x\right>\\
&\quad\quad\lesssim\|u\|_{H^1}^2+\|w_x\|_{L^\infty}\|v\|_{H^1}^2,\\
\left<v_x,w\Sigma \left(Cv\right)_x\right>&\lesssim\left(\|v\|_{L^2}^2+\frac{1}{\eps}\|u_x\|_{L^2}^2+\eps\|\tau_x\|_{L^2}^2\right).
\end{align*}
Finally, noting again that $\|\psi_t\|_{L^\infty}$ is bounded we find
that
\begin{align*}
\left<v_x,w\Sigma\left((\bar{U}_x+v_x)\psi_t\right)_x\right>&\lesssim
               \left((\eps+\|\psi_{xt}\|_{L^\infty})\|v_x\|_{L^2}^2+\frac{1}{\eps}\|\psi_t\|_{H^1}^2\right)
-\frac{1}{2}\left<v_x,\left(w\Sigma \psi_t\right)_xv_{x}\right>\\
&\lesssim\left(\eps+\|w_x\|_{L^\infty}+\|\psi_{xt}\|_{L^\infty}\right)\|v_x\|_{L^2}^2+\|u_x\|_{L^2}^2+\frac{1}{\eps}\|\psi_t\|_{H^1}^2
\end{align*}
and
\[
\left<v_x,w\Sigma\left(g(\bar{U})\psi_x\right)_x\right>\lesssim \eps\|v_x\|_{L^2}^2+\frac{1}{\eps}\|\psi_x\|_{H^1}^2.
\]
Therefore, by choosing $\|\psi_{t}\|_{H^2}<\eps$, so that $\|\psi_{xt}\|_{L^\infty}<\eps$ by Sobolev embedding, and noting
that $\|\psi_{xt}\|_{L^\infty}$ is bounded, we have
\begin{align*}
I_2&\leq-\left<v_{xx},w\Sigma Bv_{xx}\right>+\widetilde{C}\left(\|v\|_{L^2}^2+\frac{1}{\eps}\|u_x\|_{L^2}^2+
\eps\|\tau_x\|_{L^2}^2+\frac{1}{\eps}\|\psi_t\|_{H^1}^2+\frac{1}{\eps}\|\psi_x\|_{H^1}^2\right)\\
&\quad\quad+\|w_x\|_{L^\infty}\left(\|v\|_{H^1}^2+\|u_{xx}\|_{L^2}^2\right)
\end{align*}
for some constant $\widetilde{C}>0$, from which the lemma follows by noting that 
$\|w_x\|_{L^\infty}\lesssim\|\psi_x\|_{H^2}$ 
by Sobolev embedding.
\end{proof}

From Lemma \ref{l:Fried1} it follows that if the diffusion $\Sigma B$ were positive definite we would immediately have the bound
\[
\frac{1}{2}\frac{d}{dt}\mathcal{F}_1[v,v]
         \leq-\theta\|v\|_{H^1}^2+C_1\left(\|v\|_{L^2}^2+\|\psi_t\|_{H^1}^2+\|\psi_x\|_{H^1}^2\right)
+\mathcal{F}_1[v,w\mathcal{N}]
\]
by using Sobolev embedding and choosing $\eps>0,\|\psi_x\|_{H^2}$ sufficiently small, which, up to the
contribution of the nonlinear residual terms $\mathcal{N}$, has the form of the inequality
stated in Proposition \eqref{damping}; see the proof of Lemma \ref{l:kaw} below for
details on how this calculation would proceed.  However, the lack of total parabolicity in
the governing equation \eqref{eqn:1conslaw} is manifested here in the fact that the matrix
$\Sigma B$ is not positive definite, rather being only positive semi-definite with rank one.
In order to compensate for this ``degenerate diffusion", 
we introduce the Kawashima compensator $K$ defined in \eqref{K} as
\[
K:=\eta\left(
         \begin{array}{cc}
           0 & -1 \\
           1 & 0 \\
         \end{array}
       \right)
\]
where $0<\eta\ll 1$ is a small parameter which will be determined later.  The fact that the hyperbolic effects in \eqref{linperturteq1}
can compensate for this degeneracy in the diffusive term $\Sigma B$ is
the
point of the following lemma.

\begin{lemma}\label{l:kaw}
Assume the amplitude condition \eqref{froudebd} holds.  Then for $\eta>0$ sufficiently small,
the matrix $\Sigma B+KA$ is positive definite and, furthermore, the associated bilinear form
satisfies the coercivity estimate
\[
\left<\xi,\left(\Sigma B+KA\right)\xi\right>\geq\theta\left(\|\xi_2\|_{L^2}^2+\eta\|\xi_1\|_{L^2}^2\right)
\]
for some constant $\theta>0$ and all $\xi=\left(\xi_1,\xi_2\right)^T\in L^2(\RM)$.
\end{lemma}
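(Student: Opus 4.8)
The statement is, for each fixed $x\in[0,X]$, an assertion about the $2\times2$ matrix $\Sigma B(x)+KA(x)$, so the plan is to compute it explicitly from \eqref{Sigma}, \eqref{coeffs}, \eqref{K} and then integrate a pointwise bound. First I would record that $\Sigma B=\bp 0&0\\0&b\ep$ with $b:=\delta^{-2}\nu\bar\tau^{-2}$, which is strictly positive and, on the compact period $[0,X]$, bounded below by a constant $b_0>0$, using $\bar\tau>0$ together with the positivity of $\delta^{-2}$ --- the latter being precisely the content of the amplitude condition \eqref{froudebd} that makes $\Sigma$ positive definite. For the product $KA$, rather than plug in the explicit entry $A_{21}$, I would use the already-noted fact that $\Sigma$ symmetrizes $A$: from $(\Sigma A)_{21}=(\Sigma A)_{12}=A_{12}=-1$ one reads off $A_{21}=-\delta^{2}$, whence
\[
KA=\eta\bp 0&-1\\1&0\ep\bp -c&-1\\-\delta^{2}&-c\ep=\eta\bp \delta^{2}&c\\-c&-1\ep .
\]

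The key observation --- essentially the only idea in the proof --- is that upon forming $\Sigma B+KA=\bp \eta\delta^{2}&\eta c\\-\eta c&b-\eta\ep$ and passing to its symmetric part $\tfrac12\big((\Sigma B+KA)+(\Sigma B+KA)^{T}\big)$, the off-diagonal entries $\pm\eta c$ cancel, leaving the diagonal matrix $\bp \eta\delta^{2}&0\\0&b-\eta\ep$, independently of the value of the wave speed $c$. In other words, the skew-symmetric Kawashima term $KA$ contributes, to leading order in $\eta$, a definite positive amount $\eta\delta^{2}$ in exactly the ``hyperbolic'' $\tau$-direction --- the direction spanning $\ker(\Sigma B)$, along which the diffusion was degenerate --- while perturbing the already-positive ``parabolic'' $u$-entry $b$ only by $-\eta$; this is the mechanism by which hyperbolic convection compensates for partial parabolicity.

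To conclude I would fix $\eta\in(0,b_0/2]$, so that $b-\eta\ge b_0/2>0$ uniformly in $x$, and invoke continuity of the periodic profile $\bar U$ on the compact interval $[0,X]$ together with the strict inequality \eqref{froudebd} to obtain a uniform lower bound $\delta^{2}\ge\delta_0^{2}>0$. Then for real-valued $\xi=(\xi_1,\xi_2)^{T}\in L^{2}(\RM)$ (the case in which the lemma is applied in the energy estimates),
\[
\left<\xi,(\Sigma B+KA)\xi\right>=\int_{\RM}\Big(\eta\delta^{2}\,\xi_1^{2}+(b-\eta)\,\xi_2^{2}\Big)\,dx\ \ge\ \theta\Big(\eta\|\xi_1\|_{L^{2}}^{2}+\|\xi_2\|_{L^{2}}^{2}\Big),
\]
with $\theta:=\min\{\delta_0^{2},\,b_0/2\}>0$ independent of $\eta$, which in particular exhibits $\Sigma B+KA$ as positive definite. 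There is no substantive obstacle here: the only points requiring a line of care are the passage from pointwise positivity to a uniform constant $\theta$ --- where compactness of the period and strictness of \eqref{froudebd} enter --- and the bookkeeping of $A_{21}=-\delta^{2}$, which I would sidestep by quoting ``$\Sigma$ symmetrizes $A$'' rather than manipulating the explicit coefficient.
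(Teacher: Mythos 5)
Your proof is correct. The paper in fact omits its own proof, remarking only that Lemma \ref{l:kaw} ``is based on simple matrix perturbation argument.'' Your direct computation makes that argument completely explicit in the $2\times 2$ case: $\Sigma B+KA=\bp \eta\delta^{2}&\eta c\\-\eta c&b-\eta\ep$ with $b=\delta^{-2}\nu\bar\tau^{-2}$, the off-diagonal entries cancel upon symmetrization, and the diagonal entries $\eta\delta^{2}$ and $b-\eta$ are uniformly positive by the strictness of \eqref{froudebd}, compactness of the period, and $(\textrm{H1})$, once $\eta$ is fixed small relative to $\inf b$. This is precisely the Kawashima mechanism the paper alludes to---the skew-symmetric compensator $K$ pairs with $A$ to supply an $O(\eta)$ positive contribution along $\ker(\Sigma B)$ while only mildly perturbing the parabolic entry---so your route is the same in substance, just carried out by explicit algebra rather than by appeal to an abstract perturbation lemma. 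The one point you are right to flag carefully, and do, is that $\theta$ must be chosen independent of $\eta$ for the coercivity bound to have content in the subsequent energy estimates; your $\theta=\min\{\delta_0^{2},b_0/2\}$ with $\eta\le b_0/2$ handles this cleanly. (Your use of the symmetrizer identity $A_{21}=-\delta^{2}$, rather than the raw entry of \eqref{coeffs}, also neatly sidesteps what appears to be a harmless typo in the paper's definition of $\delta^{2}$.)
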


The proof of Lemma \ref{l:kaw} is based on simple matrix perturbation argument and is omitted.  Defining now the first-order ``Kawashima bilinear form" as
\[
\mathcal{E}_1[v_1,v_2]:=\mathcal{F}_1[v_1,v_2]+\left<\partial_xv_{1},Kv_2\right>
\]
and noting the special structures
\be\label{dictK}
KB,\;KB_x=\eta\bp 0&*\\0&0\ep,\quad
KA_x=\eta\bp* &0\\0&0\ep,\quad
KC=\eta\bp * & *\\0&0\ep,
\ee
we have the following refinement of the first-order Friedrichs-type estimate in Lemma \ref{l:Fried1}.

\begin{lemma}
Under the same hypothesis of Lemma \eqref{l:Fried1} and for $\eta>0$ sufficiently small, we have the first-order ``Kawashima-type" estimate
\begin{align*}
\frac{1}{2}\frac{d}{dt}\mathcal{E}_1[v,v]&\leq -\theta_1\left(\|u_x\|_{H^1}^2+\eta\|\tau_x\|_{L^2}^2\right)+\frac{C}{\eta^2}\left(\|v\|_{L^2}^2+\|\psi_t\|_{H^1}^2+\|\psi_x\|_{H^1}^2\right)
                   +\mathcal{E}_1[v,w\mathcal{N}]
\end{align*}
for some  constants $\theta_1,C>0$.
\end{lemma}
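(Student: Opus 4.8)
The plan is to add to the first-order Friedrichs estimate \eqref{Fried1} of Lemma \ref{l:Fried1} the time derivative of the Kawashima correction $\langle v_x,Kv\rangle$, and to use the coercivity of $\Sigma B+KA$ furnished by Lemma \ref{l:kaw} to turn the merely positive-\emph{semi}definite dissipation $-\langle v_x,w\Sigma Bv_x\rangle$ of \eqref{Fried1} into genuine dissipation for \emph{both} components of $v_x$: full strength on the parabolic component $u_x$ and strength $O(\eta)$ on the hyperbolic component $\tau_x$. Concretely, write
\[
\tfrac12\tfrac{d}{dt}\mathcal{E}_1[v,v]=\tfrac12\tfrac{d}{dt}\mathcal{F}_1[v,v]+\tfrac12\tfrac{d}{dt}\langle v_x,Kv\rangle,
\]
bound the first term by \eqref{Fried1} (keeping the parameter $\eps>0$ free for the moment), and for the second integrate by parts in $x$ and use the skew-symmetry $K^*=-K$ to obtain $\tfrac12\tfrac{d}{dt}\langle v_x,Kv\rangle=-\langle v_t,Kv_x\rangle$; then substitute the perturbation equation \eqref{linperturteq1} for $v_t=w\big[(Bv_x)_x-(Av)_x+Cv+(\bar U_x+v_x)\psi_t+g(\bar U)\psi_x+\mathcal{N}\big]$, $w=(1+\psi_x)^{-1}$.

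The one piece that matters is the $-(Av)_x$ contribution to $-\langle v_t,Kv_x\rangle$: modulo the commutator of $w$ with the relevant operators --- which by the dictionaries \eqref{dictB}, \eqref{dictK} reduces to $\|w_x\|_{L^\infty}\lesssim\|\psi_x\|_{H^2}$ times $H^1$-norms --- and modulo the lower-order term coming from $A_x v$ --- bounded by $\eta\|\tau_x\|_{L^2}\|\tau\|_{L^2}$ --- it produces the quadratic form $-\langle v_x,wKAv_x\rangle$, which together with the term $-\langle v_x,w\Sigma Bv_x\rangle$ already present on the right of \eqref{Fried1} assembles into $-\langle v_x,w(\Sigma B+KA)v_x\rangle$. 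By Lemma \ref{l:kaw}, the amplitude condition \eqref{froudebd}, and $w\ge c_0>0$, this is $\le-c_0\theta\big(\|u_x\|_{L^2}^2+\eta\|\tau_x\|_{L^2}^2\big)$. All other contributions of $-\langle v_t,Kv_x\rangle$ are harmless: the $(Bv_x)_x$, $Cv$ and forcing pieces, read off via \eqref{dictK}, are linear combinations of cross terms of the form $\eta\|\tau_x\|_{L^2}\big(\|u_{xx}\|_{L^2}+\|u_x\|_{L^2}+\|v\|_{L^2}+\|\psi_t\|_{H^1}+\|\psi_x\|_{L^2}\big)$, which by Young split into $\tfrac14 c_0\theta\eta\|\tau_x\|_{L^2}^2$ (absorbed into the good term) plus $O(\eta)\|u_{xx}\|_{L^2}^2$, $O(1)\|u_x\|_{L^2}^2$, $O(1)\big(\|v\|_{L^2}^2+\|\psi_t\|_{H^1}^2+\|\psi_x\|_{L^2}^2\big)$; and the residual piece $-\langle w\mathcal{N},Kv_x\rangle=\langle v_x,Kw\mathcal{N}\rangle$ combines with the $\mathcal{F}_1[v,w\mathcal{N}]$ of \eqref{Fried1} to give precisely $\mathcal{E}_1[v,w\mathcal{N}]$.

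It remains to close against the ``bad'' $\|u_x\|_{L^2}^2$ terms, which forces a genuine two-parameter balancing. The estimate \eqref{Fried1} carries $\tfrac{C_1}{\eps}\|u_x\|_{L^2}^2$ together with $C_1\eps\|\tau_x\|_{L^2}^2$, and since our $\tau_x$-dissipation is only $O(\eta)$ we must take $\eps\sim\eta$ (say $\eps=c_0\theta\eta/4C_1$) to swallow $C_1\eps\|\tau_x\|_{L^2}^2$; but then the coefficient of $\|u_x\|_{L^2}^2$ is of size $\eta^{-1}$, too large for the order-one dissipation $-c_0\theta\|u_x\|_{L^2}^2$ to absorb directly. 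The resolution, exactly as in the partially parabolic shock case \cite{MaZ4,Z3}, is to interpolate $\|u_x\|_{L^2}^2\le\|u\|_{L^2}\|u_{xx}\|_{L^2}\le\eps'\|u_{xx}\|_{L^2}^2+(\eps')^{-1}\|u\|_{L^2}^2$ and route the differentiated part into the order-one parabolic term $-\langle v_{xx},w\Sigma Bv_{xx}\rangle\le-c_0'\|u_{xx}\|_{L^2}^2$ that is \emph{also} present on the right of \eqref{Fried1}; choosing $\eps'\sim\eta$ (small enough that the total coefficient on $\|u_{xx}\|_{L^2}^2$ stays below $c_0'/2$) leaves an undifferentiated remainder $(\eps\eps')^{-1}\|u\|_{L^2}^2\sim\eta^{-2}\|v\|_{L^2}^2$ --- exactly the error budget in the statement. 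The term $\tfrac{C_3}{\eps}\|(\psi_t,\psi_x)\|_{H^1}^2$ is then $\le C\eta^{-2}\|(\psi_t,\psi_x)\|_{H^1}^2$, while $C_2\|\psi_x\|_{H^2}\big(\|v\|_{H^1}^2+\|u_{xx}\|_{L^2}^2\big)$ is absorbed by taking $\|\psi_x\|_{H^2}$ small relative to $\eta$ (permitted by the hypothesis). Collecting everything and using $\|u_x\|_{L^2}^2+\|u_{xx}\|_{L^2}^2=\|u_x\|_{H^1}^2$ yields the claimed inequality, e.g.\ with $\theta_1=\tfrac14\min(c_0\theta,c_0')$.

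The main obstacle is precisely this accounting, which is dictated by the degeneracy of $\Sigma B$ in the $\tau$-direction: that degeneracy pins the $\tau_x$-dissipation at order $\eta$, hence $\eps\sim\eta$, hence a dangerous $\eta^{-1}$ multiple of the \emph{parabolic} quantity $\|u_x\|_{L^2}^2$ that can be disposed of only indirectly, by interpolation against the genuinely dissipated second-order term; and one must simultaneously verify that none of the many cross terms generated by $K$ reintroduces an $O(1)$ positive multiple of $\|\tau_x\|_{L^2}^2$, which works only because every such term carries a compensating factor $\eta$. Tracking the non-constant weight $w=(1+\psi_x)^{-1}$ in every integration by parts (via $\|w_x\|_{L^\infty}\lesssim\|\psi_x\|_{H^2}$) is routine, as in the lengthy computation behind \eqref{Fried1}.
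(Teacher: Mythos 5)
Your proposal follows essentially the same route as the paper's own proof: compute $\tfrac12\tfrac{d}{dt}\langle v_x,Kv\rangle$ via integration by parts and skew-symmetry, substitute the perturbation equation, peel off $-\langle v_x,wKAv_x\rangle$ and assemble $-\langle v_x,w(\Sigma B+KA)v_x\rangle$ to invoke the coercivity of Lemma \ref{l:kaw}, use the dictionary \eqref{dictK} to check every remaining cross term carries a compensating factor $\eta$, then carry out the two-parameter balance $\eps\sim\eta$ to swallow $C_1\eps\|\tau_x\|_{L^2}^2$, and finally interpolate $\|u_x\|_{L^2}^2\le\|u_{xx}\|_{L^2}\|u\|_{L^2}$ to route the resulting $\eta^{-1}\|u_x\|_{L^2}^2$ into the second-order parabolic dissipation, leaving the $\eta^{-2}\|v\|_{L^2}^2$ budget. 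This reproduces the paper's argument in all essentials (the paper introduces an auxiliary parameter $\delta$ which it fixes at an $O(1)$ value; this is a cosmetic difference), so your proof is correct and takes the same approach.
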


\begin{proof}
Using \eqref{dictK} along with arguments similar to those as in Lemma \ref{l:Fried1}, we obtain the estimate
\begin{align*}
\frac{1}{2}\partial_t\left<v_x,Kv\right>&=\left<v_x,wK\left((Bv_x)_x-(Av)_x+Cv+\bar{U}_x\psi_t+v_x\psi_t+g(\bar{U})\psi_x+\mathcal{N}\right)\right>\\
&\leq C\eta\left(\frac{1}{\delta}\|v\|_{L^2}^2+\delta\|\tau_x\|_{L^2}^2+\frac{1}{\delta}\|u_{x}\|_{L^2}^2
       +\frac{1}{\delta}\|u_{xx}\|_{L^2}^2+\frac{1}{\delta}\left(\|\psi_t\|_{L^2}^2+\|\psi_x\|_{L^2}^2\right)\right)\\
&\quad\quad-\left<v_x,wKAv_x\right>+\left<v_x,Kw\mathcal{N}\right>
\end{align*}
for some positive constants $\theta_1,C>0$, and for any $\delta>0$ sufficiently small, where we have used
that $\|\psi_t\|_{L^\infty}\lesssim\|\psi_t\|_{H^1}$ can be chosen sufficiently small, say of order $\mathcal{O}(\delta)$.
Since $w\in L^\infty$ then, it follows from Lemma \ref{l:Fried1} that
\begin{align*}
\frac{1}{2}\partial_t\mathcal{E}_1[v,v]
            &\leq-\left<v_x,w\left(\Sigma B+KA\right)v_x\right>-\left<v_{xx}w\Sigma Bv_{xx}\right>\\
&\quad\quad+C_1\left(\left(\frac{\eta}{\delta}+1\right)\|v\|_{L^2}^2
                  +\left(\frac{\eta}{\delta}+\frac{1}{\eps}\right)\|u_x\|_{L^2}^2+\left(\eta\delta+\eps\right)\|\tau_x\|_{L^2}^2+\frac{\eta}{\delta}\|u_{xx}\|_{L^2}^2\right)\\
&\quad\quad\quad\quad +C_2\|\psi_x\|_{H^1}\left(\|v\|_{H^1}^2+\|u_{xx}\|_{L^2}^2\right)\\
&\quad\quad\quad\quad\quad\quad+C_3\left(\frac{1}{\eps}+\frac{1}{\delta}\right)\left(\|\psi_t\|_{H^1}^2+\|\psi_x\|_{H^1}^2\right)+\mathcal{E}_1[v,w\mathcal{N}]
\end{align*}
By Lemma \ref{l:kaw} then, we find that for $\eta>0$ sufficiently small, say $0<\eta<\eta_0$, we have the estimate
\[
-\left<v_x,w\left(\Sigma B+KA\right)v_x\right>-\left<v_{xx}w\Sigma Bv_{xx}\right>\leq-\theta\left(\|u_x\|_{L^2}^2+\eta\|\tau_x\|_{L^2}^2+\|u_{xx}\|_{L^2}^2\right)
\]
for some constant $\theta>0$.  Thus, by fixing $\delta$ and choosing $\eps=\eps(\eta)$ such that
\begin{equation}\label{eqn:bds1}
0<\delta=\frac{\theta}{2C_1}\quad\textrm{ and }0<\eps(\eta)=\frac{\eta\theta}{4}.
\end{equation}
we find
that
\[
\left(\eta(-\theta+C_1\delta)+\eps\right)\|\tau_x\|_{L^2}^2=-\frac{\theta\eta}{4}\|\tau_x\|_{L^2}^2.
\]
By subsequently requiring that the free parameter $\eta>0$ satisfy
\begin{equation}\label{eqn:bds2}
0<\eta\leq\min\left\{\frac{\theta\delta}{2C_1},\eta_0\right\}
\end{equation}
we similarly find that
\[
\left(-\theta+\frac{\eta}{\delta}C_1\right)\|u_{xx}\|_{L^2}^2\leq-\frac{\theta}{2}\|u_{xx}\|_{L^2}^2.
\]
from which it follows by the above requirements on the parameters $\eta$, $\eps(\eta)$, and $\delta$ that
\begin{align*}
\frac{1}{2}\frac{d}{dt}\mathcal{E}_1[v,v]&\leq-\hat\theta\left(\|u_x\|_{L^2}^2+\eta\|\tau_x\|_{L^2}^2+\|u_{xx}\|_{L^2}^2\right)\\
&\quad\quad+\widetilde{C}_1\left(\|v\|_{L^2}^2+\frac{1}{\eta}\|u_{x}\|_{L^2}^2\right)
+C_2\|\psi_x\|_{H^1}\left(\|v\|_{H^1}^2+\|u_{xx}\|_{L^2}^2\right)\\
&\quad\quad\quad\quad\quad\quad+\frac{\widetilde{C}_3}{\eta}\left(\|\psi_t\|_{H^1}^2+\|\psi_x\|_{H^1}^2\right)+\mathcal{E}_1[v,w\mathcal{N}].
\end{align*}
Next, the Sobolev inequality $\|g_x\|_{L^2}^2\leq \|g_{xx}\|_{L^2}\|g\|_{L^2}$ along with Young's inequality implies that
$
\frac{\widetilde{C}_1}{\eta}\|u_x\|_{L^2}^2\leq\frac{\theta_1}{2}\|u_{xx}\|_{L^2}^2+\frac{2\widetilde{C_1}}{\hat\theta\eta^2}\|u\|_{L^2}^2
$
which, by now choosing $\|\psi_x\|_{H^1}^2$ sufficiently small so that
$
-\frac{\hat\theta}{2}+C_2\|\psi_x\|_{H^1}^2<0,
$
completes the proof.
\end{proof}

Using similar arguments, we can obtain higher order Kawashima--type estimates 
by defining the $k^{\textrm{th}}$-order
Kawashima bilinear form as
\[
\mathcal{E}_k[v_1,v_2]:=\left<v,\Sigma v\right>+\sum_{j=1}^k\left(\left<\partial_x^jv_1,K\partial_x^{j-1}v_2\right>
                      +\left<\partial_x^jv_1,\Sigma\partial_x^jv_2\right>\right)
\]
for each $k\in\mathbb{N}$.  Indeed, the following estimate can be obtained by simply iterating the above
argument and using the Sobolev inequality
$\|g_x\|_{H^j}\leq \alpha\|\partial_x^{j+2}g\|_{L^2}+\alpha^{-1}\|g\|_{L^2}$ for $\alpha>0$ sufficiently small.

\begin{lemma}\label{l:gen1}
Let $j\in\mathbb{N}$ and $v(\cdot,0)\in H^j$, and suppose that for $0\leq t\leq T$, the $H^j$ norm of $v$ and the $H^{j+1}$ norms
of $\psi_x$ and $\psi_t$ remain bounded by a sufficiently small constant.  Moreover, suppose that condition \eqref{froudebd} is
satisfied.  Then for $\eta>0$ sufficiently small, there exist constants $\theta_1, C>0$ such that, for all $0\leq t\leq T$,
\[
\frac{1}{2}\frac{d}{dt}\mathcal{E}_j[v,v]\leq-\theta_1\left(\|u_x\|_{H^{j}}^2+\eta\|\tau_x\|_{H^{j-1}}^2\right)
                +\frac{C}{\eta^2}\left(\|v\|_{L^2}^2+\|\psi_t\|_{H^j}^2+\|\psi_x\|_{H^j}^2\right)+\mathcal{E}_j[v,w\mathcal{N}].
\]
\end{lemma}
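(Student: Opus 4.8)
The plan is to argue by induction on $j$, the base case $j=1$ being the first-order Kawashima-type estimate of the previous lemma. For the step from $j-1$ to $j$ one must control the two new contributions to $\mathcal{E}_j[v,v]$, namely $\langle\partial_x^j v,\Sigma\partial_x^j v\rangle$ and the compensator term $\langle\partial_x^j v,K\partial_x^{j-1}v\rangle$, the remaining pieces of $\mathcal{E}_j$ being handled by the inductive hypothesis at order $j-1$ (the still lower-order norms that this hypothesis does not see, such as $\|u_x\|_{H^{j-2}}^2$, being reabsorbed at the end by interpolation).

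First I would differentiate the perturbation equation \eqref{linperturteq1} $j$ times in $x$, writing $v_t=w\bigl((Bv_x)_x-(Av)_x+Cv+(\bar U_x+v_x)\psi_t+g(\bar U)\psi_x+\mathcal{N}\bigr)$ with $w=(1+\psi_x)^{-1}$, and pair $\partial_x^j$ of the right-hand side against $\Sigma\partial_x^j v$. Exactly as in Lemma \ref{l:Fried1}, after one integration by parts the leading diffusion term yields $-\langle\partial_x^{j+1}v,w\Sigma B\,\partial_x^{j+1}v\rangle$, which by \eqref{dictB} controls only $\|\partial_x^{j+1}u\|_{L^2}^2$; the leading convection term is symmetrized using that $\Sigma A$ is symmetric (valid by \eqref{froudebd}), producing a bounded term $-\tfrac12\langle\partial_x^j v,(w\Sigma A)_x\partial_x^j v\rangle$ of order $\|\partial_x^j v\|_{L^2}^2$; and every other term --- those from $Cv$, $(\bar U_x+v_x)\psi_t$, $g(\bar U)\psi_x$, and all commutators $[\partial_x^j,w\Sigma B]$, $[\partial_x^j,w\Sigma A]$, $[\partial_x^j,w\Sigma C]$, which generate factors of $\|\psi_x\|_{H^{j+1}}$ through derivatives of $w$ --- is of strictly lower differential order in $v$ and is absorbed either by the inductive hypothesis, by Young's inequality with a small parameter $\eps$ against $\eps\|\partial_x^{j+1}u\|_{L^2}^2$, or into the forcing norms $\|\psi_t\|_{H^j}^2+\|\psi_x\|_{H^j}^2$ and $\|v\|_{L^2}^2$. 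As at first order, this leaves the top hyperbolic quantity $\|\partial_x^j\tau\|_{L^2}^2$ uncontrolled.

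To recover it I would differentiate $\langle\partial_x^j v,K\partial_x^{j-1}v\rangle$ in $t$ and substitute the equation: using \eqref{dictK}, the leading convection contribution is $-\langle\partial_x^j v,wKA\,\partial_x^j v\rangle$ up to $\eta$-small and lower-order errors, while the parabolic contribution $\langle\partial_x^j v,wKB\,\partial_x^{j+1}v\rangle=\eta\langle\partial_x^j\tau,w\ast\,\partial_x^{j+1}u\rangle$ is, by Young's inequality, of order $\eta\delta^{-1}\|\partial_x^{j+1}u\|_{L^2}^2+\eta\delta\|\partial_x^j\tau\|_{L^2}^2$. Adding this to the Friedrichs estimate and invoking the coercivity $\langle\xi,(\Sigma B+KA)\xi\rangle\geq\theta(\|\xi_2\|_{L^2}^2+\eta\|\xi_1\|_{L^2}^2)$ of Lemma \ref{l:kaw}, the combined top-order terms dominate $-\theta\bigl(\|\partial_x^{j+1}u\|_{L^2}^2+\eta\|\partial_x^j\tau\|_{L^2}^2\bigr)$; then, precisely as in the first-order proof, one fixes $\delta$ small, takes $\eps=\eps(\eta)$ comparable to $\eta$, and finally takes $\eta$ small, so that the $\eta\delta\|\partial_x^j\tau\|_{L^2}^2$ and $\eta\delta^{-1}\|\partial_x^{j+1}u\|_{L^2}^2$ errors are swallowed; the residual $\mathcal{E}_j[v,w\mathcal{N}]$ is carried along unestimated, as in the statement.

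The last step is to clear the surviving intermediate-order norms $\|\partial_x^i v\|_{L^2}^2$, $1\leq i\leq j$, via the Gagliardo--Nirenberg/Sobolev inequality $\|g_x\|_{H^{j-1}}\leq\alpha\|\partial_x^{j+1}g\|_{L^2}+\alpha^{-1}\|g\|_{L^2}$ quoted before the lemma: applied to $u$, an $\alpha$-small multiple of $\|u_x\|_{H^{j-1}}^2$ is absorbed into $\tfrac{\theta_1}{2}\|u_x\|_{H^j}^2$ at the cost of $\alpha^{-2}\|u\|_{L^2}^2$, and applied to $\tau$ it does the same for $\|\tau_x\|_{H^{j-2}}^2$ against $\eta\|\tau_x\|_{H^{j-1}}^2$; iterating the first-order mechanism downward in derivative order then closes the estimate with the stated constant $C/\eta^2$. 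The main obstacle is bookkeeping the parameters: one must track the $\eta$-, $\eps$-, $\delta$-, $\alpha$-dependence through every commutator so that the hyperbolic terms $\|\partial_x^i\tau\|_{L^2}^2$, which are controlled only with weight $\eta$, are never multiplied by a constant growing faster than $\eta^{-1}$ before being absorbed, and so that the parabolic gain of one derivative ($\|u_x\|_{H^j}$ controlling $\partial_x^{j+1}u$) is genuinely available to dominate every top-order $u$-error produced by the Kawashima term. Once the ordering $\delta\gg\eps\sim\eta$ (with $\alpha$ taken small depending on $\eta$) is fixed, the computation is a direct but lengthy iteration of the first-order argument.
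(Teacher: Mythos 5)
Your proposal is correct and follows exactly the route the paper has in mind: the paper disposes of Lemma~\ref{l:gen1} in one sentence (``iterate the above argument and use the Sobolev inequality''), and your argument is precisely that iteration, carried out with the same ingredients — $j$-fold differentiation of \eqref{linperturteq1}, the Friedrichs symmetrization via $\Sigma$, the Kawashima compensator via \eqref{dictK} and Lemma~\ref{l:kaw}, commutator control from $\|\psi_x\|_{H^{j+1}}$, and the interpolation inequality $\|g_x\|_{H^{j-1}}\le\alpha\|\partial_x^{j+1}g\|_{L^2}+\alpha^{-1}\|g\|_{L^2}$ to absorb intermediate-order norms. Framing it as induction on $j$ rather than as direct iteration is cosmetic, and your parameter ordering $\delta\gg\eps\sim\eta$ with $\alpha$ chosen last matches the first-order proof the paper spells out.
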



To complete the proof of Proposition \ref{damping} it remains to estimate the terms $\mathcal{E}_j[v,w\mathcal{N}]$ corresponding to the
nonlinear residual terms in the perturbation equation \eqref{linperturteq1}.  In particular, our goal is to
demonstrate that these terms can be absorbed into the bound already computed, in the sense that there
exists constants $C>0$ and $0<\eps\ll 1$ such that
\begin{equation}\label{eqn:absorb}
\mathcal{E}_j[v,w\mathcal{N}]\leq \eps\left(\|u_x\|_{H^{j}}^2+\|\tau_x\|_{H^{j-1}}^2\right)+
         C\left(\|v\|_{L^2}^2+\|\psi_t\|_{H^j}^2+\|\psi_x\|_{H^j}^2\right).
\end{equation}
%
%

To this end, we notice that from \eqref{eqn:P} 
we have the identity
\begin{align*}
P&=\left(g(\tilde{U})-g(\bar{U})\right)\psi_x=\left(\int_0^1dg(\bar{U}+\theta v)d\theta\right)v\psi_x.\\
\end{align*}
Using Sobolev embedding then, we can estimate $P$ 
in $H^1$ in a straight forward way, using that $\|v\|_{L^\infty}$ is assumed to be
small (say, at most one).  Indeed, using the above integral representation for $P$ we immediately obtain
\[
\|P\|_{L^2}\lesssim\|v\psi_x\|_{L^2}\lesssim\|v\|_{L^2}\|\psi_x\|_{L^\infty}\lesssim\|v\|_{L^2}\|\psi_x\|_{H^1}
\]
and similarly
\begin{align*}
\|P_x\|_{L^2}&\lesssim\left\|\left(\int_0^1d^2g(\bar{U}+\theta v)d\theta\right)(\bar{U}_x+v_x)v\psi_x\right\|_{L^2}+\left\|\left(\int_0^1dg(\bar{U}+\theta v)d\theta\right)(v\psi_x)_x\right\|_{L^2}\\
&\lesssim\|v\psi_x\|_{L^2}+\|v_x\psi_x\|_{L^2}\|v\|_{L^\infty}+\|(v\psi_x)_x\|_{L^2}\\
&\lesssim\|v\|_{L^2}\|\psi_x\|_{H^1}+\|v_x\|_{L^2}\|\psi_x\|_{H^1}\|v\|_{L^\infty}+\|v_x\|_{L^2}\|\psi_x\|_{H^1}+\|v\|_{L^\infty}\|\psi_{xx}\|_{L^2}\\
&\lesssim\left(\|v\|_{H^1}+\|v\|_{H^1}^2\right)\|\psi_x\|_{H^1}\\
&\lesssim\|v\|_{H^1}\|\psi_x\|_{H^1}.
\end{align*}
From these estimates together with Cauchy-Schwarz it follows that
\begin{align*}
\mathcal{E}_1[v,wP]&\lesssim\|v\|_{L^2}\|P\|_{L^2}+\|v_x\|_{L^2}\|P\|_{L^2}+\|v_x\|_{L^2}\|(wP)_x\|_{L^2}\\
&\lesssim\|v\|_{H^1}^2\|\psi_x\|_{H^1}
\end{align*}
where, again, we have used the fact that $\|\psi_x\|_{H^1}$ is small.  Since we can control the size of $\psi_x$ in $H^1$,
it follows that the $\|v_x\|_{L^2}$ term above can be absorbed in the sense that the above inequality is of
form \eqref{eqn:absorb}.

Using similar arguments, we can express $T(v)$ as
\[
T(v)=g(\tilde{U})-g(\bar{U})-dg(\bar{U})v=\left(\int_0^1\left(\int_0^1d^2g(\bar{U}+\theta s v)ds\right)\theta v~d\theta\right)v.
\]
from which we get the estimates
\[
\|T(v)\|_{L^2}\lesssim\|v\|_{L^\infty}\|v\|_{L^2}\lesssim\|v\|_{H^1}^2 
\]
and, similarly,
\[
\|T(v)_x\|_{L^2}\lesssim\|v\|_{H^1}^2+\|v\|_{H^1}^3\lesssim\|v\|_{H^1},
\]
where we have used the facts that $\|v\|_{H^1}$ is small.  As above, these estimates readily yield
\begin{align*}
\mathcal{E}_1[v,wT]&=\left<v,\Sigma w T\right>+\left<v_x,K wT\right>+\left<v_x,\Sigma(wT)_x\right>\\
&\lesssim\|v\|_{L^2}\|v\|_{H^1}^2+\|v_x\|_{L^2}\|v\|_{H^1}^2
\end{align*}
which again absorbs due to the control over $v$ in $H^1$.
%

To analyze the remaining terms of $\mathcal{N}$, consider the term
$
\left(B(\tilde{U})(\bar{U}_x+v_x)\frac{\psi_x}{1+\psi_x}\right)_x
$
present at the end of \eqref{N}.
Using the representation
$
B(\tilde{U})=\left(\int_0^1dB(\bar{U}+\theta v)d\theta\right)v+B(\bar{U})
$
along with the smallness of $v$ in $H^1$ and $\psi_x$ in $H^1$,
it follows that the associated
contribution to $\mathcal{E}_1[v,w\mathcal{N}]$ can be absorbed so long as the
highest-order
Friedrich's term
\[
\left<v_x,\Sigma\left(w\left(B(\tilde{U})(\bar{U}_x+v_x)\frac{\psi_x}{1+\psi_x}\right)_x\right)_x\right>
\]
can be shown to absorb.  Using integration by parts,
we have
\begin{align*}
\left<v_x,\Sigma\left(w\left(B(\tilde{U})(\bar{U}_x+v_x)\frac{\psi_x}{1+\psi_x}\right)_x\right)_x\right>
    &=-\left<\left(\Sigma v_x\right)_x,w\left(B(\tilde{U})(\bar{U}_x+v_x)\frac{\psi_x}{1+\psi_x}\right)_x\right>,\\
\end{align*}
which absorbs by estimates similar to those previously obtained, using the smallness of $v$ in $H^1$ and $\psi_x$ in $H^2$.
%
%

To estimate the contribution of the final terms of $\mathcal{E}_1[v,w\mathcal{N}]$, associated with $-Q_x$,
first write $Q:=Q_1-Q_2+Q_3$ where
\begin{align*}
Q_1&:=f(\tilde{U})-f(\bar{U})-df(\bar{U})v=\left(\int_0^1\left(\int_0^1d^2f(\bar{U}+\theta sv)ds\right)\theta v~d\theta\right)v\\
Q_2&:=B(\tilde{U})\tilde{U}_x-B(\bar{U})\bar{U}_x-B(\bar{U})v_x\\
Q_3&:=\left(dB(\bar{U})\bar{U}_x\right)v
\end{align*}
and notice that the contribution of $\mathcal{E}_1[v,w(Q_{1}+Q_3)_x]$ absorbs using estimates analogous to those obtained above
for $P$ and $T$.  To illustrate how to handle the contributions of $Q_2$, first notice that by \eqref{solvedderivs} we have
\[
Q_2=\left(\int_0^1dB(\bar{U}+\theta v)d\theta ~v\right)\left(\bar{U}_x+v_x\right)\frac{1}{1+\psi_x}-B(\bar{U})\left(\bar{U}_x+v_x\right)\frac{\psi_x}{1+\psi_x}
\]
which absorbs as above by the smallness of $v$ in $H^1$ and $\psi_x$ in $H^2$.

From the above considerations, then, we immediately have the following lemma.

\begin{lemma}
Under the same hypothesis of Lemma \ref{l:Fried1}, we have the first-order ``Kawashima-type" estimate
\[
\frac{d}{dt}\mathcal{E}_1[v,v]\leq -\theta_1\left(\|u_x\|_{H^1}^2+\|\tau_x\|_{L^2}^2\right)+C\left(\|v\|_{L^2}^2+\|\psi_x\|_{H^1}^2+\|\psi_t\|_{H^1}^2\right)
\]
valid for some constants $\theta_1,C>0$.
\end{lemma}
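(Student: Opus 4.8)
The plan is to obtain the stated clean inequality by combining two ingredients already in hand: (i) the refined first-order Kawashima-type estimate proved just above, which for $\eta>0$ fixed sufficiently small reads
\[
\tfrac12\tfrac{d}{dt}\mathcal{E}_1[v,v]\le -\theta_1\bigl(\|u_x\|_{H^1}^2+\eta\|\tau_x\|_{L^2}^2\bigr)+\tfrac{C}{\eta^2}\bigl(\|v\|_{L^2}^2+\|\psi_t\|_{H^1}^2+\|\psi_x\|_{H^1}^2\bigr)+\mathcal{E}_1[v,w\mathcal{N}],
\]
and (ii) the nonlinear residual bound \eqref{eqn:absorb} with $j=1$,
\[
\mathcal{E}_1[v,w\mathcal{N}]\le\eps\bigl(\|u_x\|_{H^1}^2+\|\tau_x\|_{L^2}^2\bigr)+C\bigl(\|v\|_{L^2}^2+\|\psi_t\|_{H^1}^2+\|\psi_x\|_{H^1}^2\bigr),
\]
whose verification I would assemble from the term-by-term estimates of $\mathcal{E}_1[v,wP]$, $\mathcal{E}_1[v,wT]$, the $\bigl(B(\tilde U)(\bar U_x+v_x)\tfrac{\psi_x}{1+\psi_x}\bigr)_x$ contribution, and the $Q=Q_1-Q_2+Q_3$ contributions sketched above, each reduced via the integral (Taylor-remainder) representations of \eqref{eqn:P}, \eqref{eqn:Q}, \eqref{eqn:T}, Sobolev embedding $H^1\hookrightarrow L^\infty$, the interpolation inequality $\|g_x\|_{L^2}^2\le\|g_{xx}\|_{L^2}\|g\|_{L^2}$, and Young's inequality, the crucial point being that each resulting quantity carries a prefactor $\|v\|_{H^1}$ or $\|\psi_x\|_{H^2}$ and can therefore be made as small as desired.

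Granting these, the argument is pure bookkeeping. With $\eta$ already fixed, I would choose $\eps$ (hence, via the smallness hypotheses of Lemma \ref{l:Fried1}, also $\|v\|_{H^1}$ and $\|\psi_x\|_{H^2}$) so small that $\eps\bigl(\|u_x\|_{H^1}^2+\|\tau_x\|_{L^2}^2\bigr)\le\tfrac{\theta_1}{2}\bigl(\|u_x\|_{H^1}^2+\eta\|\tau_x\|_{L^2}^2\bigr)$; substituting (ii) into (i) then absorbs the $\eps$-terms into the negative term and leaves
\[
\tfrac12\tfrac{d}{dt}\mathcal{E}_1[v,v]\le-\tfrac{\theta_1}{2}\bigl(\|u_x\|_{H^1}^2+\eta\|\tau_x\|_{L^2}^2\bigr)+C'\bigl(\|v\|_{L^2}^2+\|\psi_t\|_{H^1}^2+\|\psi_x\|_{H^1}^2\bigr).
\]
Multiplying through by $2$, using $\|\tau_x\|_{L^2}^2\ge0$ to trade the $\eta$-weight for the fixed smaller constant $\theta_1\eta$, and relabelling $\theta_1\eta$ as $\theta_1$ and $2C'$ (which absorbs the harmless $1/\eta^2$ factor, $\eta$ being fixed) as $C$, gives precisely the asserted bound.

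The main obstacle is ingredient (ii), and within it the genuinely top-order Friedrichs piece $\langle v_x,\Sigma(w\mathcal{N})_x\rangle$, which at face value involves $v_{xx}$ and $\psi_{xx}$ and so threatens loss of derivatives. I would handle it exactly as in the $B(\tilde U)$ computation above: integrate by parts to shift a derivative, $\langle v_x,\Sigma(w(\cdots)_x)_x\rangle=-\langle(\Sigma v_x)_x,w(\cdots)_x\rangle$, and then observe that every remaining factor is either uniformly bounded or small in $H^1$, so that the surviving top-order norms $\|u_{xx}\|_{L^2}$ and $\|v_x\|_{L^2}$ occur only with small coefficients and are absorbed --- $\|u_{xx}\|_{L^2}^2$ into the available $-\theta_1\|u_x\|_{H^1}^2$, which dominates it. Since all of these pieces have in fact already been carried out in the preceding paragraphs, no new estimate is needed, and the lemma follows immediately as claimed.
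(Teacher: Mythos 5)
Your proposal is correct and follows exactly the paper's route: combine the Kawashima-type estimate (Lemma \ref{l:gen1} with $j=1$) with the absorption bound \eqref{eqn:absorb} on $\mathcal{E}_1[v,w\mathcal{N}]$, then choose $\eps$ (via the smallness hypotheses) small relative to $\theta_1\eta$ and relabel constants. One tiny slip in the bookkeeping paragraph: what lets you replace $-\tfrac{\theta_1}{2}\bigl(\|u_x\|_{H^1}^2+\eta\|\tau_x\|_{L^2}^2\bigr)$ by $-\tfrac{\theta_1\eta}{2}\bigl(\|u_x\|_{H^1}^2+\|\tau_x\|_{L^2}^2\bigr)$ is $\eta\le1$, not $\|\tau_x\|_{L^2}^2\ge0$, but since $\eta\ll1$ by construction this is immaterial.
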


With this $H^1$ estimate in hand, the analogous $H^m$ estimate follows for any $m\in\mathbb{N}$, as in the statement of Lemma \ref{l:gen1}.
Finally, using one last time the Sobolev inequality $\|g_x\|_{L^2}^2\lesssim\|g_{xx}\|_{L^2}\|g\|_{L^2}$, together with Young's inequality,
we have completed the proof
of the key inequality \eqref{Eineq}, from which the proof of the Proposition \ref{damping} follows.

\section{High-frequency resolvent bounds}

In this appendix, we carry out the high-frequency resolvent
bounds needed for the high-frequency solution operator
bounds of Section \ref{HF}.  To begin, write
$$
L_{\xi} = e^{-i \xi x} L  e^{i \xi x}=
 \hat \partial B \hat \partial   -\hat \partial A +C ,
$$
where $\hat \partial:=(\partial_x+i\xi)$.
Clearly then, the norm
$\|f\|_{\hat H^1}:=\|\hat \partial f\|_{L^2([0,X])}
+\|f\|_{L^2([0,X])}$ is equivalent to the usual
norm $\|f\|_{H^1([0,X])}$ for $\xi\in [-\pi,\pi]$ bounded.
Further, note that, for periodic functions $f$, $g$ on
$[0,X]$, we have the usual integration by parts rule
\be\label{partsform}
\langle f, \hat \partial g\rangle = \langle -\hat \partial f, g\rangle,
\ee
where $\langle \cdot, \cdot \rangle$ as above denotes the standard
$L^2$ complex inner product on $[0,X]$.  The main result of this
appendix is then that for $|\xi|$ bounded away from zero and sufficiently
small the resolvent operator $\left(\lambda-L_{\xi}\right)^{-1}$ is uniformly
$H^1\to H^1$ bounded for $\Re(\lambda)=-\eta<-\theta<0$ for some constant $\theta>0$,
which is the content of the following lemma.

\bl\label{resbd}
Under the derivative condition \eqref{froudebd},
there exist constants $C,R>0$ and a constant $\theta>0$ sufficiently small such that
for $|\lambda|\ge R$ and $\Re \lambda <-\theta$,
\be\label{e:resbd}
\|w\|_{H^1([0,X])}\le C \|(L_\xi-\lambda)w\|_{H^1([0,X])}.
\ee
\el

\begin{proof}
For $\Sigma$, $K$, as defined in the proof of Proposition
\ref{damping}, define the first-order Kawashima-Bloch bilinear form
as
\[
\widehat{\mathcal{E}}_1[v_1,v_2]:=\left<v_1,\Sigma v_1\right>+\left<\hat\partial v_1,Kv_2\right>+\left<\hat\partial v_1,\Sigma\hat\partial v_2\right>
\]
and suppose $w$ is a solution of $\left(\lambda-L_{\xi}\right)w=f$.  Then using the coercivity estimate
of Lemma \ref{l:kaw}, it follows by taking the real part of the equation
\[
\widehat{\mathcal{E}}_1[w,\left(\lambda-L_\xi\right)w]=\widehat{\mathcal{E}}_1[w,f]
\]
and using the equivalence of $\widehat{\mathcal{E}}_1[w,w]\sim\|w\|_{H^1}^2$,
we obtain similarly as in the proof of Proposition \ref{damping}
\be\label{realest}
(\Re \lambda+\tilde \theta) \|w\|_{\hat H^1}^2
+\tilde \theta \|B\hat \partial_x^2 w\|_{L^2}^2
\le C(\|w\|_{L^2}^2+\|f\|_{\hat H^1}^2),
\quad \tilde \theta>0.
\ee
Similarly, defining the analogous first-order Friedrich's-Bloch bilinear form
\[
\widehat{\mathcal{F}}_1[v_1,v_2]:=\left<v_1,\Sigma v_1\right>+\left<\hat\partial v_1,\Sigma\hat\partial v_2\right>
\]
and taking the imaginary part of the equation
\[
\widehat{\mathcal{F}}_1[w,\left(\lambda-L_\xi\right)w]=\widehat{\mathcal{F}}_1[w,f]
\]
we obtain%
\be\label{imagest}
|\Im \lambda| \|w\|_{\hat L^2}^2\le
C(\|w\|_{\hat H^1}^2+
\|B\hat \partial_x^2 w\|_{L^2}^2+
\|f\|_{\hat H^1}^2).
\ee
Summing \eqref{realest} with a sufficiently small multiple
of \eqref{imagest}, we obtain for $\Re \lambda >-\tilde \theta/2$
$$
|\lambda|\|w\|_{\hat H^1}^2\le
C(\|w\|_{\hat H^1}^2+\|f\|_{\hat H^1}^2),
$$
yielding the result for $|\lambda|>2C$
by equivalence of $H^1$ and $\hat H^1$.
\end{proof}


\section{The subcharacteristic condition and Hopf bifurcation}\label{s:hopf}

At equilibrium values $u=\tau^{-1/2}>0$,
the inviscid
version \be\label{relax2}
U_t+f(U)_x=\bp0\\q(U)\ep,
\quad q(U)=1-\tau u^2,
\;
f(U)=\bp -u\\ \frac{1}{2F\tau^2}\ep
\ee
of \eqref{relax} has hyperbolic characteristics equal
to the eigenvalues $\pm \frac{u^3}{\sqrt{F}}$
of $df$, and equilibrium characteristic
$\frac{u^3}{2}$ equal to $\partial_\tau f(\tau, u_*(\tau))$,
where $u_*(\tau):=\tau^{-1/2}$ is defined by $q(\tau, u_*(\tau))=0$.
The subcharacteristic condition, i.e., the condition that the
equilibrium characteristic speed lie between the hyperbolic characteristic
speeds, is therefore
\be\label{subeq}
\frac{u^3}{2}< \frac{u^3}{\sqrt{F}},
\ee
or $F<4$ as stated in Remark \ref{relaxrmk}.

For $2\times 2$ relaxation systems such as the above,
the subcharacteristic condition is exactly the
condition that constant solutions be linearly
stable, as may be readily verified by computing the dispersion
relation using the Fourier transform.
For the full system \eqref{relax} with viscosity $\nu> 0$,
a similar computation, Taylor expanding the dispersion
relation about $\xi=0$, reveals that constant solutions are
stable with respect to {\it low-frequency perturbations}
if and only if the subcharacteristic condition $F<4$ is satisfied.

Next, let us examine the profile ODE
$
c^2 \tau'+ ((2F)^{-1}\tau^{-2})'=
1- \tau (q-c\tau)^2 -c\nu (\tau^{-2}\tau ')'
$
near an equlibrium $u_0=(q-c\tau_0)=\tau_0^{-1/2}>0$, and examine the
circumstances for which Hopf bifurcation occurs.
Linearizing about $\tau\equiv \tau_0$,
and rearranging, we obtain
\be \label{e:linprof}
c\nu \tau_0^{-2}\tau '' + (c^2  -c_s^2) \tau'
+\Big(\frac{u_0^3/2 - c}{u_0/2} \Big) \tau,
\quad
c_s:=\frac{u_0^3}{\sqrt{F}},
\ee
for which the eigenvalues are the roots $\mu$ of
$
\alpha \mu^2 + \beta \mu + \gamma=0,
$
where $\alpha=c\nu \tau^{-2}$,
$\beta = c^2-c_s^2$, and $\gamma= \frac{u_0^3/2 - c}{u_0/2}$.
Considering this as a problem indexed by parameters $u_0$,
$c$, and $q$, we see that Hopf bifurcation occurs when
roots $\mu_j(u_0,c,q)$ cross the imaginary axis as
a conjugate pair, i.e., when $\beta=0$ and $\gamma > 0$.

These translate, using \eqref{subeq} to the {\it Hopf bifurcation conditions}
\be\label{e:hopfcond}
c=c_s= \frac{u_0^3}{\sqrt{F}}
\quad \hbox{\rm and }\;
F>4.
\ee
Experiments of \cite{N1} indicate that bifurcation occurs at
{\it minimum wave speed}, i.e., as $c$ increases through the value $c_s$.
That is, the minimum wave speed among nontrivial periodic waves is
\be\label{minspeed}
c=c_s=\frac{u_0^3}{\sqrt{F}}=
\frac{1}{\sqrt{F \tau_0^3}} ,
\ee
and the minimum value of $F$ for which nontrivial periodic waves
occur is $F>4$.
The frequency at bifurcation is $\omega= \sqrt{\gamma/\alpha}
=
\tau_0^{5/2}\nu^{-1/2} \sqrt{(\sqrt{F}-2)},
$
and the period is $X=\frac{2\pi}{\omega}$.

So prescribing $X$ as we do, we must choose $F>4$, then solve
$\omega=\frac{2\pi}{X}$ to obtain
\be\label{tau0}
\tau_0= \nu^{1/5}\Big(
\frac{4\pi^2}{X^2 (\sqrt{F}-2)}
\Big)^{1/5} .
\ee
Nearby this value and with $c$ near $c_s$, we should find small-amplitude
periodic waves.

\br\label{instabrmk}
\textup{
The above discussion shows in passing that, similarly as observed
in the conservative case in \cite{OZ1}, small-amplitude periodic
waves arising through Hopf bifurcation from constant solutions
are necessarily {\it unstable} as solutions of the time-evolutionary
PDE, since they inherit (a small perturbation of) the necessarily unstable
dispersion relation of the limiting constant solution from
which they bifurcate.
%
On the other hand, in the large-amplitude limit, roll waves might
well be stable.  As observed by Gardner (see \cite{G,OZ1}), this
is determined by stability of the bounding homoclinic wave,
which in the conservative case was known to be unstable.
A good starting point for the study of roll waves, therefore, might
be to determine linearized stability of {\it solitary pulse solutions}
corresponding to homoclinic solutions of the profile ODE.
Evidence for linearized stability of {\it some} viscous roll waves is
given in \cite{N1}, namely, the approximate Dressler waves arising
in the small viscosity limit.
}
\er

\section{Numerical stability investigation}
We conclude by suggesting a number of practical
techniques for the numerical testing of stability.
These can be carried out either in the Eulerian coordinates of \cite{N2}
or in the Lagrangian coordinates of this paper.
As suggested in a more general setting in \cite{B}, several of
the algorithms may be eaily adapted from an existing nonlinear evolution
code.
Comparison of these different methods, and determination of stability
in different regimes, are interesting problems that we hope to carry
out in future work.

\subsection{Method one: the power method}\label{m1}
An easy numerical method to approximate the function
$R(\xi):=\max \Re \sigma(L_\xi)$ determining stability,
with $R(\xi)<0$ for $\xi\ne 0$ corresponding to (D1)
and $R(\xi)\le -\theta \xi^2$ corresponding to (D2).
(Condition (D3) can be verified by an Evans analysis, as
was already done in some cases in \cite{N2} and elsewhere.)

The method is just to approximate numerically the time-evolution
of the linearized equation $ w_t=L_\xi w$ on $[0,X]$
with periodic boundary conditions, which should be a straightforward
adaptation/simplification of the nonlinear code you have already
written to study nonlinear stability with respect to periodic perturbations,
and which should work quite well.  Denote the solution operator
as $e^{L_\xi t}$.  Then, a good approximation is
\be\label{Rapprox}
R(\xi)\approx
T^{1} \log \frac{|e^{L_\xi 2T}f|_{L^2}} {|e^{L_\xi T}f|_{L^2}},
\ee
where $f$ is a square wave pulse centered at $x=X/2$
and $T$ is large, say $T=10$, $T=50$ or $T=100$.
This should be relatively straightforward, and plotting
$R(\xi)$ agains $\xi$ for $\xi\in [-\pi,\pi]$ should
quickly tell stability.
See \cite{BMSZ} for related investigations and discussion.

\subsection{Method two: discretization}\label{m2}
Instead of Evans computations as in \cite{OZ1}
(these involved finding the zero-level-set of a two parameter Evans
function, with reported problematic results)
one could alternatively
proceed from a Bloch decomposition/matrix linear algebra point of view.

That is, one could
discretize $L_\xi$ on $[0,X]$ with periodic boundary conditions
as a large tri-diagonal matrix
\be\label{Tdisc}
T(\xi):=(\Delta+i\xi)B(\Delta+i\xi) -(\Delta+i\xi)A +C,
\ee
acting on vectors
$(U_1, \dots, U_L)$ of sample points, where $U_j\approx U(Xj/L)$,
and virtual point $U_0 \equiv U_L$ (periodicity),
and $\Delta$ is a discrete derivative, for example the forward
difference over $h:=X/L$, treating $[0,X]$ as a torus to wrap
generate needed values $U_j$ for $j\le 0$ or $j>L$.
For each $\xi$, call the fast linear algebra functions in MATLAB
to generate the real part of the largest real part eigenvalue of
$T$ as a function $R(\xi)$.  If $R(\xi)<0$ and $R(\xi)\le -c\xi^2$,
$c>0$, then we have spectral stability- otherwise not.
This should be fast even for $100\times 100$ matrix or so.
The discretization in $\xi$ is over $[-\pi,\pi]$, so also
no problem. $50$ points should suffice.

Note \cite{B} that discretizaton of the linearized operator $L$ is typically
already done for a standard method-of-lines realization of the linearized time
evolution.


\br\label{invpower}
\textup{
In an interesting recent talk by Dwight Barkley \cite{B}, he pointed out that
doing power method for $e^{Lt}$, $t$ small,\footnote{This
is essentially equivalent to the method suggested in Appendix \ref{m1}.}
 with implicit scheme
is something like using inverse power law on $(I-Lt)^{-1}$.
Note that $(I-Lt)^{-1}$ is expected to be compact for $t$ small
in parabolic problems, so this is a
preconditioning step
paralleling the Fredholm theory or Birman--Schwinger
approach on the analytical side.
}
\er

\subsection{Method three: nonlinear evolution}\label{m3}

The simplest test of course is just to run the full nonlinear
problem on a large domain $[-NX,NX]$, $N>>1$, with periodic
boundary conditions and square pulse wave initial conditions
centered at $x=0$.
If the difference between the solution and the unperturbed
periodic wave remains bounded in $L^\infty$, then the wave
is stable, otherwise not.  The experiment should be run only
up to time $T<< NX$ to avoid interactions with the boundary.
This, and the sensitivity of numerical evolution of nonlinear
equations, are the main disadvantages of the method.
The advantage is that this can be converted from
existing nonlinear code for evolution
on a single period $[0,X]$ (easy to change).
%
A variation is to solve the linearized equations
$v_t=Lv:=(\partial_x B\partial_x -\partial_x A + C)v$
numerically, which would be more stable but require
modification (straightforward, however) of the nonlinear
code, changing over to linear.

\subsection{Method four: Evans function computations}\label{m4}

A final approach is to compute the Evans function
$D(\xi, \lambda)$ (straightforward \cite{OZ1,N2}) and plot zero level
sets of $D(\xi, \cdot)$ for varying $\xi$ (harder).
This is not recommended in the basic form just described-
in practice this was time-intensive and
gave poorly resolved results \cite{OZ1}.
%
A somewhat more reasonable variation would be to plot just the level
sets near $(\xi, \lambda)=(0,0)$ (difficult, due to crossing/singularity
at the origin, but contained) to verify
(D2), then use winding number computations for $D(\xi, \cdot)$
to verify (D1).



\begin{thebibliography}{GMWZ7}

\bibitem[B]{B} D. Barkley, conference presentation,
workshop on hydrodynamic stability, Edinsburgh, July 2009.

\bibitem[BLeZ]{BLeZ} B. Barker, M. Lewicka and K. Zumbrun,
{\it Existence and stability of viscoelastic shock profiles,}
to appear, Arch. Ration. Mech. Anal.

\bibitem[BMSZ]{BMSZ} A. Bertozzi, A. M\"unch, M. Shearer, and K. Zumbrun,
{\it Stability of compressive and undercompressive thin film travelling waves},
The dynamics of thin fluid films.
European J. Appl. Math.  12  (2001) 253--291.

\bibitem[D]{D} R. Dressler,
{\it Mathematical solution of the problem of roll waves
in inclined open channels,} CPAJM (1949) 149--190.

\bibitem
[G]{G} R. Gardner, {\it On the structure of the spectra of
periodic traveling waves}, J. Math. Pures Appl. 72 (1993), 415-439.




\bibitem[HoZ]{HoZ} D. Hoff and K. Zumbrun
{\it Asymptotic behavior of multidimensional scalar viscous shock fronts}, Indiana Univ. Math. Journal, Vol. 49, no. 2 (2000), 427-474

\bibitem[JK]{JK} S. Jin and M.A. Katsoulakis,
{\it Hyperbolic Systems with Supercharacteristic
Relaxations and Roll Waves,} SIAM J. Applied Mathematics 61
(2000), 273-292.

\bibitem
[JZ1]{JZ1} M. Johnson and K. Zumbrun,
{\it Rigorous Justification of the Whitham Modulation Equations
for the Generalized Korteweg-de Vries Equation,}
Studies in Applied Mathematics, 125 no. 1 (2010), 69-89.


\bibitem
[JZ3]{JZ3} M. Johnson and K. Zumbrun,
{\it Nonlinear stability and asymptotic behavior of periodic traveling waves
of multidimensional viscous conservation laws in dimensions one and two},
preprint (2009).


\bibitem
[JZ4]{JZ4} M. Johnson and K. Zumbrun,
{\it Nonlinear stability of periodic traveling waves
of viscous conservation laws in the generic case},
Journal of Differential Equations, 249 no. 5 (2010), 1213-1240.

\bibitem
[JZ5]{JZ5} M. Johnson and K. Zumbrun,
{\it Nonlinear stability of spatially-periodic traveling-wave solutions of
systems of reaction diffusion equations,}
to appear, Annales de l'Institut Henri Poincare - Analyse non lineaire.



\bibitem
[K]{K} T. Kato,
{\it Perturbation theory for linear operators},
Springer--Verlag, Berlin Heidelberg (1985).

\bibitem[KZ]{KZ} B.~Kwon and K.~Zumbrun,
{\em Asymptotic behavior of multidimensional scalar relaxation shocks}.
J. Hyperbolic Differ. Equ.  6  (2009),  no. 4, 663--708.

\bibitem [LMNPZ]{LMNPZ} C. Lattanzio, C. Mascia, T. Nguyen,
R. Plaza, and K. Zumbrun,
{\it Stability of scalar radiative shock profiles},
SIAM J. Math. Anal. 41 (2009/10), no. 6, 2165--2206.

\bibitem [LRTZ]{LRTZ} G. Lyng, M. Raoofi, B. Texier, and K. Zumbrun,
{\it Pointwise Green function bounds and stability of combustion
waves,}
J. Differential Equations  233  (2007),  no. 2, 654--698.

\bibitem[MaZ1]{MaZ1} C. Mascia and K. Zumbrun,
{\it Pointwise Green's function bounds and stability of relaxation shocks,}
Indiana Univ. Math. J. 51 (2002), no. 4, 773--904.

\bibitem[MaZ2]{MaZ2} C. Mascia and K. Zumbrun,
{\it Stability of small-amplitude shock profiles of symmetric
hyperbolic-parabolic systems,}
Comm. Pure Appl. Math.  57  (2004),  no. 7, 841--876.

\bibitem[MaZ3]{MaZ3} C. Mascia and K. Zumbrun,
{\it Pointwise Green function bounds for shock profiles of
systems with real viscosity.}
Arch. Ration. Mech. Anal.  169  (2003),  no. 3, 177--263.

\bibitem[MaZ4]{MaZ4} C. Mascia and K. Zumbrun,
\emph{Stability of large-amplitude viscous shock profiles
of hyperbolic-parabolic systems,}
Arch. Ration. Mech. Anal.  172  (2004),  no. 1, 93--131.

\bibitem[N1]{N1} P. Noble,
{\it On the spectral stability of roll waves,}
Indiana Univ. Math. J. 55 (2006) 795--848.

\bibitem[N2]{N2} P. Noble,
{\it Linear stability of viscous roll waves,}
Comm. Partial Differential Equations 32  (2007) no. 10-12, 1681--1713.

\bibitem[N3]{N3} P. Noble,
Private communication.


\bibitem
[OZ1]{OZ1}
M. Oh and K. Zumbrun, \textit{Stability of periodic
solutions of viscous conservation laws with viscosity-
1. Analysis of the Evans function},
Arch. Ration. Mech. Anal. 166 (2003), no. 2, 99--166.

\bibitem
[OZ2]{OZ2}
M. Oh and K. Zumbrun, \textit{Stability of periodic
solutions of viscous conservation laws with viscosity-
Pointwise bounds on the Green function},
Arch. Ration. Mech. Anal. 166 (2003), no. 2, 167--196.

\bibitem
[OZ3]{OZ3} M. Oh, and K. Zumbrun,
\textit{Low-frequency stability analysis of periodic
traveling-wave solutions of viscous conservation laws
in several dimensions},
Journal for Analysis and its Applications, 25 (2006), 1--21.

\bibitem
[OZ4]{OZ4} M. Oh, and K. Zumbrun,
\textit{Stability and asymptotic behavior of
traveling-wave solutions of viscous conservation laws
in several dimensions}, to appear, Arch. Ration. Mech. Anal.
196 no. 1 (2010), 1-20.  Erratum: Arch. Ration. Mech. Anal., 196, no. 1 (2010), 21-23.


\bibitem[Pr]{Pr} J. Pr\"uss,
\textit{On the spectrum of $C\sb{0}$-semigroups.}
Trans. Amer. Math. Soc. 284 (1984), no. 2, 847--857.


\bibitem[SSSU]{SSSU}
B. Sandstede, A. Scheel, G. Schneider, and H. Uecker, {\it Diffusive mixing of periodic wave trains in reaction-diffusion
systems with different phases at infinity}, draft (2010).

\bibitem
[S1]{S1} G. Schneider, {\it Nonlinear diffusive stability
of spatially periodic solutions-- abstract theorem and higher space
dimensions},
Proceedings of the International Conference on Asymptotics
in Nonlinear Diffusive Systems (Sendai, 1997),  159--167,
Tohoku Math. Publ., 8, Tohoku Univ., Sendai, 1998.

\bibitem
[S2]{S2} G. Schneider,
{\it Diffusive stability of spatial periodic solutions of the
Swift-Hohenberg equation,} (English. English summary)
Comm. Math. Phys. 178 (1996), no. 3, 679--702.

\bibitem
[S3]{S3} G. Schneider,
{\it Nonlinear stability of Taylor vortices in infinite cylinders,}
Arch. Rat. Mech. Anal. 144 (1998) no. 2, 121--200.

\bibitem
[Se1]{Se1} D. Serre,
{\it Spectral stability of periodic solutions of viscous conservation laws:
Large wavelength analysis}, Comm. Partial Differential Equations 30 (2005),
no. 1-3, 259--282.


\bibitem[T]{T} M. Taylor, \emph{Partial Differential Equations III}, Applied Mathematical Sciences 117, Springer, 1996.

\bibitem [TZ1]{TZ1} B. Texier and K. Zumbrun,
{\it Hopf bifurcation of viscous shock waves in gas dynamics and MHD},
Arch. Ration. Mech. Anal. 190 (2008) 107--140.

\bibitem [TZ2]{TZ2} B. Texier and K. Zumbrun,
{\it Transition to longitudinal instability of detonation waves is
generically associated with Hopf bifurcation to time-periodic
galloping solutions,}
preprint (2008).

\bibitem [W]{W}
G.B. Whitham,
{\it Linear and Nonlinear Waves,} Wiley, New York, 1974.

\bibitem[Z1]{Z1} K. Zumbrun, {\it Stability of large-amplitude shock
waves of compressible Navier--Stokes equations,}
with an appendix by Helge Kristian Jenssen and Gregory Lyng,
in Handbook of mathematical fluid dynamics. Vol. III,  311--533,
North-Holland, Amsterdam, (2004).

\bibitem [Z2]{Z2} K. Zumbrun,
{\it Stability of viscous detonations in the ZND limit,}
to appear, Arch. Ration. Mech. Anal.

\bibitem [Z3]{Z3} K. Zumbrun,
{\em Planar stability criteria for viscous shock waves of systems
with real viscosity},
in Hyperbolic systems of balance laws, P.~Marcati, ed., vol.~1911 of Lecture
  Notes in Math., Springer, Berlin, 2007, pp.~229--326.

\bibitem [Z4]{Z4} K. Zumbrun,
{\it Stability and bifurcation of viscous shock waves,}
to appear, proc. IMA summer school 2009.

\bibitem[ZH]{ZH} K. Zumbrun and P. Howard,
\textit{Pointwise semigroup methods and stability of viscous shock waves}.
Indiana Mathematics Journal V47 (1998), 741--871;
Errata, Indiana Univ. Math. J.  51  (2002),  no. 4, 1017--1021.

\end{thebibliography}
\end{document}